\tikzset{vert/.style={draw, fill=black, circle, inner sep=1pt}}
\theoremstyle{plain}
\newtheorem{theorem}{Theorem}[section]		
\newtheorem{lemma}[theorem]{Lemma}
\newtheorem{claim}[theorem]{Claim}
\newtheorem{proposition}[theorem]{Proposition}
\newtheorem{corollary}[theorem]{Corollary}
\newtheorem{conjecture}[theorem]{Conjecture}
\newtheorem{definition}[theorem]{Definition}
\newtheorem{construction}[theorem]{Construction}
\theoremstyle{remark}
\newtheorem*{remark}{Remark}
\newcommand{\cF}{\mathcal{F}}
\newcommand{\bx}{\mathbf{x}}
\newcommand{\by}{\mathbf{y}}
\newcommand{\bz}{\mathbf{z}}
\DeclareMathOperator{\poly}{poly}
\DeclarePairedDelimiter{\abs}{\lvert}{\rvert}
\DeclarePairedDelimiter{\set}{\{}{\}}
\renewcommand{\emptyset}{\varnothing}
\renewcommand{\leq}{\leqslant}
\renewcommand{\geq}{\geqslant}
\newcommand{\eps}{\ensuremath{\varepsilon}}
\newcommand{\e}{\ensuremath{\varepsilon}}
\newcommand{\Prob}{\mathbb{P}}
\newcommand{\Ber}{\textbf{Ber}}
\newcommand\p{\mathcal{P}}
\newcommand\C{\mathscr{C}}
\crefname{conjecture}{Conjecture}{Conjectures}
\let\originalleft\left
\let\originalright\right
\renewcommand{\left}{\mathopen{}\mathclose\bgroup\originalleft}
\renewcommand{\right}{\aftergroup\egroup\originalright}
\def\imod#1{\allowbreak\mkern10mu({\operator@font mod}\,\,#1)}
\title{Asymmetric results about graph homomorphisms}
\author{Lior Gishboliner}
\author{Eoin Hurley}
\author{Yuval Wigderson}
\thanks{LG: Department of Mathematics, University of Toronto, Canada. Email: \texttt{lior.gishboliner@utoronto.ca}. Research supported by an NSERC Discovery Grant.}
\thanks{EH: Mathematical Institute, University of Oxford, Oxford, United Kingdom. Email:
\texttt{hurley@maths.ox.ac.uk}}
\thanks{YW: Institute for Theoretical Studies, ETH Z\"urich, Z\"urich, Switzerland. Email: \texttt{yuval.wigderson @eth-its.ethz.ch}. Research supported by Dr.\ Max R\"ossler, the Walter Haefner Foundation, and the ETH Z\"urich foundation.}
\begin{document}

\begin{abstract}
Many important results in extremal graph theory can be roughly summarised as ``if a triangle-free graph $G$ has certain properties, then it has a homomorphism to a triangle-free graph $\Gamma$ of bounded size''. For example, bounds on homomorphism thresholds give such a statement if $G$ has sufficiently high minimum degree, and the approximate homomorphism theorem gives such a statement for all $G$, if one weakens the notion of homomorphism appropriately.

In this paper, we study asymmetric versions of these results, where the assumptions on $G$ and $\Gamma$ need not match. For example, we prove that if $G$ is a graph with odd girth at least $9$ and minimum degree at least $\delta \abs G$, then $G$ is homomorphic to a triangle-free graph whose size depends only on $\delta$. 
Moreover, the odd girth assumption can be weakened to odd girth at least $7$ if $G$ has bounded VC dimension or bounded domination number.
This gives a new and improved proof of a result of Huang, Liu, Rong and Xu.

We also prove that in the asymmetric approximate homomorphism theorem, the bounds exhibit
a rather surprising ``double phase transition'': the bounds are super-exponential if $G$ is only assumed to be triangle-free, they become exponential if $G$ is assumed to have odd girth $7$ or $9$, and become linear if $G$ has odd girth at least $11$. 

Our proofs use a wide variety of techniques, including entropy arguments, the Frieze--Kannan weak regularity lemma, properties of the generalised Mycielskian construction, and recent work on abundance and the asymmetric removal lemma.
\end{abstract}
\maketitle

\section{Introduction}
\subsection{Background}

In this paper, we are concerned with questions of the following type: to what extent can the triangle-freeness of a large graph be ``explained'' by a small triangle-free graph? To make this vague question more formal, let us recall that a graph $G$ is \emph{homomorphic} to a graph $\Gamma$ if there exists a function $V(G) \to V(\Gamma)$ which maps edges to edges. Equivalently, $G$ is homomorphic to $\Gamma$ if $G$ is a subgraph of a blowup\footnote{A \emph{blowup} of $\Gamma$ is obtained from $\Gamma$ by replacing every vertex by an independent set, and replacing every edge by a complete bipartite graph.} of $\Gamma$. We write $G \to \Gamma$ if $G$ is homomorphic to $\Gamma$.

Note that if $G \to \Gamma$ and $\Gamma$ is triangle-free, then certainly $G$ is triangle-free as well. If we think of $G$ as large and of $\Gamma$ as small (i.e.\ of constant size), then the existence of a homomorphism $G \to \Gamma$ gives a constant-sized ``explanation'' of the triangle-freeness of $G$. That is, $G$ is a subgraph of a blowup of the constant-sized $\Gamma$, and this structure already guarantees that $G$ is triangle-free.

There are many results which state that under certain conditions, a large triangle-free graph $G$ is homomorphic to a constant-sized triangle-free $\Gamma$. Perhaps the earliest such result is due to Andr\'asfai \cite{MR169227} (see also \cite{MR340075}), who proved that if $G$ is a triangle-free graph with minimum degree greater than $\frac 25 \abs G$, then $G$ is bipartite, i.e.\ homomorphic to $K_2$. The constant $\frac 25$ is tight in this result, as shown by the five-cycle $C_5$ as well as all of its balanced blowups. Extending Andr\'asfai's theorem, H\"aggkvist \cite{MR671908} proved that if $G$ is a triangle-free graph with minimum degree greater than $\frac 38 \abs G$, then $G$ is homomorphic to $C_5$. The constant $\frac 38$ is again best possible, as shown by the $8$-vertex M\"obius ladder $M_8$, obtained by adding the four long diagonals to $C_8$, as well as all of its balanced blowups.

That is, above minimum degree $\frac 25$, there is only one triangle-free structure, given by the graph $K_2$. At the threshold $\frac 25$, another structure appears, namely that of $C_5$. These are the only two structures above the threshold $\frac 38$, at which point a new structure $M_8$ appears. Such a pattern continues for a while, and Jin \cite{MR1264720} determined the next seven thresholds as well as the structures that appear in them. 

However, as proved by Hajnal (quoted in \cite{MR342429}), this pattern cannot continue forever. Indeed, Hajnal constructed a family of triangle-free graphs $G$ with minimum degree $(\frac 13 - o(1))\abs G$ whose chromatic number tends to infinity. In particular, this implies that these graphs $G$ cannot be homomorphic to any constant-sized triangle-free graph. Rather remarkably, \L uczak \cite{MR2260851} (extending work of Thomassen \cite{MR1956996}) proved that the constant $\frac 13$ is best possible, in the \nolinebreak following \nolinebreak sense.
\begin{theorem}[\L uczak \cite{MR2260851}]\label{thm:triangle hom threshold}
	For every $\delta>0$, there exists a triangle-free graph $\Gamma$ such that every triangle-free graph $G$ with minimum degree at least $(\frac 13 + \delta)\abs G$ is homomorphic to $\Gamma$.
\end{theorem}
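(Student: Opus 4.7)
The plan is to use the Szemer\'edi regularity lemma combined with the minimum-degree condition, which imposes strong structural constraints on the reduced graph.

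Fix $\epsilon \ll d \ll \delta$ and apply the regularity lemma to any triangle-free $G$ with $\delta(G) \geq (\tfrac 13 + \delta)\abs G$, obtaining an equipartition $V(G) = V_0 \cup V_1 \cup \cdots \cup V_k$ with $k \leq K_0(\delta)$. Form the reduced graph $R$ on $[k]$ by connecting $ij$ when $(V_i, V_j)$ is $\epsilon$-regular with density at least $d$. Two properties follow from standard arguments: $R$ is triangle-free (any triangle in $R$ would, by the counting lemma, produce many triangles in $G$, contradicting triangle-freeness), and $\delta(R) \geq (\tfrac 13 + \tfrac{\delta}{2})\abs R$ (inherited from the min-degree hypothesis on $G$ after discarding the negligible measure of atypical pairs).

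The number of triangle-free graphs $R'$ with $\abs{R'} \leq K_0(\delta)$ and $\delta(R') \geq (\tfrac 13 + \tfrac \delta 2)\abs{R'}$ is bounded by a function of $\delta$ alone. Letting $\mathcal{F}(\delta)$ denote this finite family, set $\Gamma := \bigsqcup_{R' \in \mathcal{F}(\delta)} R'$; this is a triangle-free graph of size depending only on $\delta$, and it contains every possible reduced graph as a subgraph. Hence it suffices to show that $G \to R$, since then $G \to R \hookrightarrow \Gamma$.

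The main obstacle is exhibiting this homomorphism \emph{exactly}, since regularity only produces an approximate structure. The na\"ive assignment $v \mapsto i$ for $v \in V_i$ fails on vertices of $V_0$, on atypical vertices inside each $V_i$, and on edges lying in irregular or low-density pairs. The idea is to exploit the $\delta$-slack above $\tfrac 13$: for each ``bad'' vertex $v$, triangle-freeness of $G$ forces the set of parts containing its neighbours to induce an independent set in $R$ of total relative weight $\geq \tfrac 13 + \delta - o(1)$; combined with the triangle-freeness of $R$ and $\delta(R) > \tfrac 13 \abs R$, an Andr\'asfai-style dichotomy argument forces such an independent set to be contained in the $R$-neighbourhood of some specific vertex $j(v) \in V(R)$. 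Defining $\phi(v) = j(v)$ on bad vertices and $\phi(v) = i$ on typical vertices of $V_i$ should yield a valid homomorphism $G \to R$, possibly after replacing $R$ by a bounded triangle-free enlargement which can be absorbed into the definition of $\mathcal{F}(\delta)$. The heart of the difficulty is making this local-to-global consistency argument quantitatively tight, and making sure the slack $\delta>0$ suffices uniformly over all bad vertices; everything else is a routine application of regularity.
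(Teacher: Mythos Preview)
This theorem is not proved in the paper; it is quoted as a background result of \L uczak and only cited. So there is no ``paper's own proof'' to compare against, and I will assess your sketch on its own terms and against \L uczak's original argument.

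Your overall framework---regularity lemma, reduced graph $R$ which is triangle-free with $\delta(R)>\tfrac13\abs R$, and then upgrading the approximate structure to an exact homomorphism into a bounded-size target---is exactly the approach of \L uczak. You also correctly identify that the only non-routine step is the last one.

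However, the specific mechanism you propose for that step does not work as stated. The structural claim ``an Andr\'asfai-style dichotomy forces such an independent set to be contained in the $R$-neighbourhood of some vertex $j(v)$'' is false. For a concrete counterexample, take $R=K_{m,m}$ minus a perfect matching (so $R$ is triangle-free with $\delta(R)=m-1>\tfrac13\abs R$ for $m\ge 4$), and let $I$ be one full side of the bipartition; then $\abs I=m>\tfrac13\abs R$, yet every vertex of $R$ misses some element of $I$. Such an $I$ can genuinely arise as the ``neighbourhood profile'' $I(v)$ of an atypical vertex $v\in G$, so this is not a vacuous obstruction. Your escape hatch of ``replacing $R$ by a bounded triangle-free enlargement'' might rescue this particular example (enlarge to $K_{m,m}$), but you have not argued that such an enlargement always exists, nor have you addressed the harder consistency requirement: if $u,v$ are adjacent bad vertices, the assigned $j(u),j(v)$ must be adjacent in the target, and nothing in your outline forces this.

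In \L uczak's actual proof the target graph is not $R$ itself. The homomorphism is defined via the finitely many possible ``profiles'' $I(v)\subseteq V(R)$, and the key point is a careful definition of adjacency among profiles that simultaneously makes the natural map $v\mapsto I(v)$ a genuine homomorphism and keeps the target triangle-free. I would recommend reading \L uczak's short paper directly to see how this is arranged; the regularity scaffolding you have set up is correct, but the endgame needs a different idea than the one you sketched.
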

More recently, Brandt and Thomass\'e \cite{brandtthomasse} gave a complete description of the family of all such graphs $\Gamma$; they are the so-called \emph{Vega graphs}.

While the results of Andr\'asfai, H\"aggkvist, Jin, \L uczak, Brandt--Thomass\'e and many others are beautiful, they are too rigid to be useful in certain applications. We would often like to assume less about the graph $G$, and are willing to obtain less precise structural information. For example, a less restrictive notion than that discussed above is that of \emph{approximate homomorphisms}. 

Given two graphs $G, \Gamma$, a function $V(G) \to V(\Gamma)$ is said to be an \emph{$\varepsilon$-approximate homomorphism} if it maps all but at most $\varepsilon \abs G^2$ edges of $G$ to edges of $\Gamma$. Equivalently, $G$ has an $\varepsilon$-approximate homomorphism into $\Gamma$ if it is $\varepsilon$-close\footnote{Two $n$-vertex graphs $G_1,G_2$ on the same vertex set are \emph{$\varepsilon$-close} if one can be obtained from the other by adding/deleting at most $\varepsilon n^2$ edges, and they are \emph{$\varepsilon$-far} otherwise.} to a graph homomorphic to $\Gamma$.

The remarkable \emph{approximate homomorphism theorem}, perhaps first explicitly observed by Tao \cite{MR2259060}, states that every triangle-free graph $G$ has an $\varepsilon$-approximate homomorphism to a triangle-free graph $\Gamma$, whose size depends only on $\varepsilon$.
In other words, up to some small noise, every large triangle-free graph $G$ can be obtained from a \emph{constant-sized} triangle-free graph $\Gamma$ by blowing it up and then passing to a subgraph, where the constant size of $\Gamma$ depends only on the amount of noise, and not on the size of $G$.

Moreover, the approximate homomorphism theorem holds much more generally, dealing not just with triangle-free graphs. To state it in full generality, let us say that 
a graph $\Gamma$ is {\em $F$-hom-free} if there is no homomorphism from $F$ to $\Gamma$. Equivalently, $\Gamma$ is $F$-hom-free if every blowup of $\Gamma$ is $F$-free. Note that if $F$ is a triangle (or more generally a clique), then the properties of $F$-hom-freeness and $F$-freeness coincide.
Clearly, if $G$ is homomorphic to an $F$-hom-free graph $\Gamma$, then $G$ is certainly $F$-hom-free as well. The general statement of the approximate homomorphism theorem gives a rough converse to this simple observation.
\begin{theorem}[Approximate homomorphism theorem]\label{thm:approx hom}
	For every graph $F$ and every $\varepsilon>0$, there exists a constant $M_F(\varepsilon)>0$ such that the following holds. If a graph $G$ is $F$-hom-free, then it has an $\varepsilon$-approximate homomorphism to an $F$-hom-free graph $\Gamma$ with $\abs \Gamma \leq M_F(\varepsilon)$.
\end{theorem}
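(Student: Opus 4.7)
The plan is to prove this via Szemer\'edi's regularity lemma, taking $\Gamma$ to be the reduced graph of a sufficiently fine regular partition of $G$ and verifying $F$-hom-freeness of $\Gamma$ by means of the counting (or key) lemma.

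I would first choose parameters $d \leq \varepsilon/3$ and $\varepsilon'$ small enough in terms of $d$ and $|V(F)|$ (so that the counting lemma will apply below), and then apply Szemer\'edi's regularity lemma to $G$ with regularity parameter $\varepsilon'$ and lower bound $K_0 := \lceil 3/\varepsilon \rceil$ on the number of parts. This yields an equitable partition $V(G) = V_1 \cup \dots \cup V_k$ with $K_0 \leq k \leq M(K_0, \varepsilon')$. Let $\Gamma$ be the reduced graph on vertex set $[k]$, where $ij \in E(\Gamma)$ if and only if $(V_i, V_j)$ is $\varepsilon'$-regular with density at least $d$, and let $\phi : V(G) \to V(\Gamma)$ be the map sending each $v \in V_i$ to $i$. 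The bound $M_F(\varepsilon) := M(K_0, \varepsilon')$ then inherits the tower-type growth of Szemer\'edi's lemma, consistent with the ``super-exponential'' regime mentioned in the abstract.

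I expect that $\phi$ is an $\varepsilon$-approximate homomorphism by routine accounting: the edges of $G$ not sent to edges of $\Gamma$ are either (i) contained in a single cluster (at most $n^2/(2k) \leq \varepsilon n^2/6$), (ii) contained in an irregular pair (at most $\varepsilon' n^2 \leq \varepsilon n^2/3$), or (iii) contained in a regular pair of density below $d$ (at most $dn^2/2 \leq \varepsilon n^2/6$), and these sum to at most $\varepsilon n^2$.

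The heart of the proof is to verify that $\Gamma$ itself is $F$-hom-free. Suppose for contradiction that there is a homomorphism $f : F \to \Gamma$. Then for every edge $uv \in E(F)$ the pair $(V_{f(u)}, V_{f(v)})$ is $\varepsilon'$-regular with density at least $d$, so the standard counting/key lemma produces (many) homomorphic copies of $F$ in $G$ respecting $f$, contradicting $F$-hom-freeness of $G$. The main technical obstacle is calibrating parameters so that the counting lemma applies: one embeds the vertices of $F$ one at a time into the relevant clusters while maintaining a large pool of candidates for the remaining vertices, which forces $\varepsilon'$ to shrink with both $d$ and $|V(F)|$. This is the only step that genuinely uses the structure of $F$, and it is classical.
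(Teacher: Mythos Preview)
Your proposal is correct and is exactly the ``standard proof via Szemer\'edi's regularity lemma'' that the paper invokes; the paper does not spell out a proof of this theorem, treating it as a known result due to Tao. For its quantitative refinement (\cref{prop:exponential approx hom}), the paper reorganises the argument around the Frieze--Kannan weak regularity lemma together with the cut-distance counting lemma and an assumed asymmetric removal bound, rather than the full Szemer\'edi lemma plus key lemma you use: one takes an equitable weak-regular partition, shows via counting that the density-thresholded reduced graph $\Gamma_0$ is $\frac{\varepsilon}{2}$-close to $F$-hom-free, and then passes to a maximum $F$-hom-free subgraph $\Gamma\subseteq\Gamma_0$. That variant is what yields $M_{F,H}(\varepsilon)\leq 2^{\poly(1/\varepsilon)}$ in the abundant case, whereas your version naturally gives only tower-type $M_F(\varepsilon)$.
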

We remark that we work with the property of $F$-hom-freeness, rather than the weaker $F$-freeness, as it is the more natural notion when working with approximate homomorphisms. But this is mostly for convenience, and it is not hard to show that \cref{thm:approx hom} is equivalent to an analogous statement about $F$-free graphs (see e.g.\ \cite{FZ}).

As we shortly discuss in greater detail, the approximate homomorphism theorem is very closely related to the famous \emph{graph removal lemma} \cite{MR519318,MR1251840,MR1404036}. Indeed, if one examines the standard proof of the graph removal lemma using Szemer\'edi's regularity lemma, one immediately sees that it naturally constructs an approximate homomorphism to an $F$-hom-free graph of constant size. We remark that the connections between the graph removal lemma and minimum degree conditions such as those discussed above have recently been studied in \cite{FW_minimum_degree,GJS_minimum_degree}.

Both \cref{thm:triangle hom threshold,thm:approx hom} are \emph{symmetric} statements, in the sense that the structural property of $G$ and $\Gamma$ are the same: they are both triangle-free in \cref{thm:triangle hom threshold}, and they are both $F$-hom-free in \cref{thm:approx hom}.
In this paper, we study \emph{asymmetric} versions of these questions, where we impose different conditions on these two graphs. Quite surprisingly, this study is very subtle, and reveals a great deal of unexpected variation. We now turn to discuss these asymmetric statements, and our main results, in more detail.

\subsection{Asymmetric homomorphism thresholds}
Given a family $\cF$ of graphs, a graph $G$ is said to be \emph{$\cF$-free} if $G$ does not contain any $F \in \cF$ as a subgraph. The \emph{homomorphism threshold} of $\cF$, denoted $\delta_{\hom}(\cF)$, is defined as the infimum of all $\delta \in [0,1]$ with the property that every $\cF$-free graph $G$ with minimum degree at least $\delta \abs G$ is homomorphic to some $\cF$-free graph $\Gamma$ whose size depends only on $\delta$. In case $\cF = \{F\}$ consists of a single graph, we write $\delta_{\hom}(F)$ instead of $\delta_{\hom}(\{F\})$. In this language, \cref{thm:triangle hom threshold} states that $\delta_{\hom}(C_3) \leq \frac 13$, and the construction of Hajnal discussed above yields a matching lower bound $\delta_{\hom}(C_3) \geq \frac 13$.

Recently, there has been a great deal of interest in determining the homomorphism thresholds of (families of) odd cycles. Let us write $\C_{2t+1}$ for the family $\{C_3,C_5,\dots,C_{2t+1}\}$ of all odd cycles of length at most $2t+1$. Letzter and Snyder \cite{MR3879964} proved that $\delta_{\hom}(\C_5) = \frac 15$, and Ebsen and Schacht \cite{MR4078811} extended this and showed that $\delta_{\hom}(\C_{2t+1})= \frac{1}{2t+1}$ for all $t$. More recently, answering a question of Ebsen and Schacht, Sankar \cite{2206.07525} proved that $\delta_{\hom}(C_{2t+1})>0$ for all $t$. This result is notable because it is the first lower bound on homomorphism thresholds that really uses the $\cF$-freeness of $\Gamma$, as opposed to relying on chromatic number lower bounds. Indeed, Thomassen \cite{MR2321926} proved that if $t \geq 2$, then every $C_{2t+1}$-free graph $G$ with minimum degree at least $\delta \abs G$ has constant chromatic number (depending only on $\delta$ and $t$). 

Our first main result concerns an asymmetric variant of homomorphism thresholds. 
\begin{theorem}\label{thm:C3C5C7}
     Let $t \geq 1$ and $\delta > 0$, and let $G$ be a $\C_{2t+5}$-free graph with minimum degree at least $\delta \abs G$. Then $G$ has a homomorphism to a $\C_{2t+1}$-free graph $\Gamma$ with $\abs \Gamma \leq (t+1)^{2/\delta}$.
\end{theorem}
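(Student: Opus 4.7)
The plan is to build the homomorphism $\phi \colon V(G) \to \Gamma$ via a BFS from a root vertex, exploiting the near-bipartite structure of $G$ forced by the odd-girth hypothesis. I would first reduce to the connected case: since every vertex of $G$ has at least $\delta|G|$ neighbours, each connected component has at least $\delta|G|+1$ vertices, and so there are at most $1/\delta$ components, which may be handled separately and then recombined by taking a disjoint union of targets.

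Fix a root $v_0$ in a connected component and let $L_0, L_1, \ldots, L_D$ be the BFS layers of $G$ from $v_0$. The odd-girth hypothesis $\ge 2t+7$ gives the following structure: for each $i \le t+2$, the layer $L_i$ is independent in $G$, since otherwise an edge inside $L_i$ would create an odd closed walk of length $2i+1 \le 2t+5$ through $v_0$, yielding a forbidden odd cycle. In particular, the ball $\bigcup_{j \le t+2} L_j$ is bipartite. Moreover, in this bipartite regime each $v \in L_i$ (for $1 \le i \le t+1$) has all of its $\ge \delta|G|$ neighbours in $L_{i-1} \cup L_{i+1}$, giving $|L_{i-1}| + |L_{i+1}| \ge \delta|G|$; summing these two-layer constraints (rather than the weaker three-layer constraints that one gets from the minimum-degree condition alone) over a suitable range of indices yields an effective diameter bound of roughly $2/\delta$, which is the origin of the exponent.

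Having established the BFS structure, I would define $\phi$ by assigning to each vertex $u$ a label in $\{0, 1, \ldots, t\}^{2/\delta}$ that records a combinatorial trace of a canonical BFS path from $u$ back to $v_0$, placing $u$ at each layer into one of $t+1$ buckets chosen inductively from the bipartite structure of the preceding layers. The target graph $\Gamma$ is then the image of $G$ under $\phi$, with edges $\phi(u)\phi(v)$ inherited from edges $uv$ of $G$, so by construction $|\Gamma| \le (t+1)^{2/\delta}$.

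The main obstacle will be showing that $\Gamma$ is $\C_{2t+1}$-free. A short odd cycle in $\Gamma$ would lift via $\phi$ to a short odd closed walk in $G$, but by the odd-girth assumption $\ge 2t+7$ any such walk in $G$ has length at least $2t+7$. The heart of the proof is thus to arrange the bucket encoding so that the lift of any odd cycle of length $\le 2t+1$ in $\Gamma$ forces a closed walk of length $\le 2t+5$ in $G$, contradicting the hypothesis. Designing the bucket partitions to achieve this rigidity — ensuring that distinct ``phases'' of a potential odd cycle in $\Gamma$ must correspond to distinct positions in $G$ — is the delicate technical step, and I expect it to require a layer-by-layer inductive construction, possibly guided by properties of the generalised Mycielskian-type structures among the tools the authors mention.
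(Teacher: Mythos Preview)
Your BFS-from-a-single-root approach has a genuine gap that the paper's argument avoids. Two issues:

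\textbf{The diameter bound.} The two-layer inequality $|L_{i-1}|+|L_{i+1}|\ge \delta|G|$ is only available when $L_i$ is independent, i.e.\ for $i\le t+1$. Summing over these $O(t)$ indices says nothing about the diameter. The three-layer inequality $|L_{i-1}|+|L_i|+|L_{i+1}|\ge \delta|G|$ does hold for all $i$, but summing it gives only $D\le 3/\delta+O(1)$, not $2/\delta$. So the exponent you claim is not justified.

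\textbf{Far layers are not independent.} This is the more serious problem. Beyond level $t+2$, a BFS layer $L_i$ need not be independent, and neither need its iterated neighbourhoods $N^j(L_i)$. Whatever ``bucket encoding'' you hope to build must cope with edges inside layers, and nothing in your sketch addresses this. The Mycielskian-type step you allude to requires, for each set $I$ being absorbed, that $I,N(I),\dots,N^t(I)$ are all independent; this fails for generic BFS layers.

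The paper's proof sidesteps both issues by abandoning the single root. It greedily chooses a maximal family $a_1,\dots,a_k$ with pairwise disjoint neighbourhoods $X_j=N(a_j)$; minimum degree forces $k\le 1/\delta$. By maximality, every remaining vertex has a neighbour in some $X_j$, so one gets a partition $V(G)=X_1\cup\dots\cup X_k\cup Y_1\cup\dots\cup Y_k$ into $2k\le 2/\delta$ sets, \emph{each} of which lies within distance $2$ of some $a_j$. This is exactly what makes the odd-girth hypothesis bite: each $X_j$ and $Y_j$, and each of its first $t$ iterated neighbourhoods, is independent. One then adds these $2k$ sets one at a time, applying the $t$-fold Mycielskian at each step; the $\C_{2t+1}$-freeness of the target comes for free from the Mycielskian lemma, with no lifting argument needed. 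The $2/\delta$ in the exponent is precisely the count $2k$ of sets, not a diameter.
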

For example, setting $t=1$, this result states that a graph with linear minimum degree and odd girth at least $9$ is homomorphic to a triangle-free graph of constant size.

Let us define the \emph{asymmetric homomorphism threshold} $\delta_{\hom}(\cF_1; \cF_2)$ to be the infimum of all $\delta$ such that every $\cF_1$-free graph $G$ with minimum degree at least $\delta \abs G$ is homomorphic to a constant-sized $\cF_2$-free graph $\Gamma$.
Note that if $\cF_1=\cF_2$, this precisely recovers the earlier definition of the homomorphism threshold. In this language, \cref{thm:C3C5C7} implies that $\delta_{\hom}(\mathscr C_{2t+5}; \C_{2t+1}) = \nolinebreak 0$. 

In fact, we believe that the inclusion of $C_{2t+5}$ in \cref{thm:C3C5C7} is unnecessary, and leave the following as a tantalizing open problem. 
\begin{conjecture}\label{conj:no C7}
	If $G$ is a $\C_{2t+3}$-free graph with minimum degree at least $\delta\abs G$, then $G$ is homomorphic to a $\C_{2t+1}$-free graph of constant size (depending only on $\delta$). In other words, $\delta_{\hom}(\C_{2t+3};\C_{2t+1})=0$.
\end{conjecture}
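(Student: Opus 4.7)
The natural starting point is to examine the proof of \cref{thm:C3C5C7} and try to weaken its hypothesis by exactly one odd cycle length. The appearance of $(t+1)^{2/\delta}$ in the bound is suggestive: presumably the argument iterates some neighborhood-branching process for $O(1/\delta)$ rounds, producing a template of bounded size, and the $\C_{2t+5}$-freeness of $G$ is used precisely to guarantee that at each step the relevant neighborhoods are themselves $\C_{2t+3}$-free (one odd cycle shorter). The conjecture asks us to run essentially the same argument starting one level lower, so the real task is to identify what is lost when neighborhoods in the host are only $\C_{2t+1}$-free and to find a substitute structural input.

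For the base case $t=1$ -- that every graph of odd girth at least $7$ and linear minimum degree is homomorphic to a constant-sized triangle-free graph -- I would proceed in three steps. First, invoke Thomassen's chromatic-number bound (applicable since odd girth $7$ implies $C_5$-free): $G$ has constant chromatic number $k=k(\delta)$. Second, apply the approximate homomorphism theorem (\cref{thm:approx hom}) with $F=C_3$ to produce an $\varepsilon$-approximate homomorphism of $G$ into a constant-sized triangle-free graph $\Gamma$, where $\varepsilon$ is chosen tiny relative to $\delta$ and $k$. Third -- where the asymmetric setting is supposed to help -- upgrade the approximate homomorphism to an exact one, using the linear minimum degree together with the $C_5$-freeness of $G$ to show that each misplaced edge can be rerouted through a short odd walk whose parity is forced by the odd girth $7$ hypothesis.

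The main obstacle is the third step, which I expect to be genuinely hard. The approximate homomorphism is wrong on $\varepsilon \abs G^2$ edges, and repairing them requires, for each such $uv \in E(G)$, producing a consistent image in $\Gamma$. Naive averaging over common neighborhoods cannot suffice, since Sankar's construction shows $\delta_{\hom}(\C_{2t+1})>0$ and so the symmetric problem really has nontrivial lower bound extremal configurations. The crux is thus to use the single extra forbidden odd cycle $C_{2t+3}$ in $G$ as a genuinely asymmetric resource: any attempted repair in $\Gamma$ creating a forbidden closed odd walk of length $2t+3$ must correspond to such a walk in $G$, and by hypothesis any such walk can be short-circuited through a vertex whose neighborhood structure then dictates the correct image. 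Making this short-circuiting quantitative, robust against the $\varepsilon$-many errors, and strong enough to iterate up the hierarchy in $t$, is in my view the substantive content of the conjecture and the reason it remains open.
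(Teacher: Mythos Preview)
This statement is a \emph{conjecture} in the paper, not a theorem: the authors explicitly present it as ``a tantalizing open problem'' and do not prove it. There is therefore no proof in the paper to compare your proposal against. The paper's partial evidence consists of \cref{thm:C3C5C7} (which needs the stronger $\C_{2t+5}$-free hypothesis), \cref{thm:bounded domination} (which replaces the minimum degree assumption by bounded domination number), and \cref{cor:bounded VC} (which adds a bounded VC dimension hypothesis). None of these yields the conjecture itself.

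Your proposal is also, as you yourself acknowledge, not a proof: Step 3 --- upgrading an $\varepsilon$-approximate homomorphism into a triangle-free $\Gamma$ to an exact one --- is the entire content of the problem, and you correctly identify it as ``genuinely hard'' and ``the reason it remains open.'' A few remarks on the strategy. First, Steps 1 and 2 are somewhat redundant: Thomassen's chromatic bound already gives an exact homomorphism into a bounded-size graph, just not a $\C_{2t+1}$-free one, so the approximate homomorphism theorem is not obviously adding leverage. Second, the paper's own method for the weaker \cref{thm:C3C5C7} is quite different from what you sketch: it does not pass through approximate homomorphisms at all, but instead builds the target $\Gamma$ directly via iterated generalised Mycielskians, using \cref{lem:generalised extend hom} to extend a homomorphism one independent neighbourhood at a time. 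The role of the $\C_{2t+5}$-free hypothesis there is precisely to guarantee that $N^i(X_j)$ and $N^i(Y_j)$ are independent for all $0 \le i \le t$ (see \cref{claim:independent layers}); with only $\C_{2t+3}$-freeness one loses independence at the top layer, and this is where the argument breaks. If you want to attack the conjecture, understanding exactly how to repair that specific failure seems more promising than the approximate-to-exact upgrade route.
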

Note that, if true, \cref{conj:no C7} would be best possible, as Ebsen and Schacht \cite{MR4078811} proved that $\delta_{\hom}(\C_{2t+1};\C_{2t+1})=\frac{1}{2t+1}>0$.
As partial evidence towards \cref{conj:no C7}, we  prove the following result, which gives the same conclusion if we replace the minimum degree assumption by the assumption of bounded domination number.
\begin{theorem}\label{thm:bounded domination}
    Let $G$ be a $\C_{2t+3}$-free graph with domination number $\gamma(G)$. Then $G$ has a homomorphism to a $\C_{2t+1}$-free graph $\Gamma$ with $\abs \Gamma \leq 3\cdot (t+1)^{\gamma(G)-1}-1$.
\end{theorem}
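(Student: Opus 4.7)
The plan is to induct on $\gamma = \gamma(G)$, constructing $\Gamma = \Gamma_\gamma$ as the $(\gamma-1)$-fold iterate of a \emph{wrap} operation starting from $\Gamma_1 = K_2$. For a graph $\Gamma'$ of odd girth at least $2t+3$, set $V(\wrap(\Gamma')) := V(\Gamma') \times \{0, 1, \ldots, t\} \cup \{w_1, \ldots, w_t\}$, with edges: a copy of $\Gamma'$ on level $0$ (edges $(u, 0)(v, 0)$ for $uv \in E(\Gamma')$); zigzag edges $(u, i)(v, i+1)$ whenever $uv \in E(\Gamma')$ and $0 \le i < t$; a pendant apex path $w_1 \sim w_2 \sim \cdots \sim w_t$; and apex-to-top-layer edges $w_1 (v, t)$ for every $v \in V(\Gamma')$. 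Then $|\wrap(\Gamma')| = (t+1)|\Gamma'| + t$, so iterating from $K_2$ gives $|\Gamma_\gamma| = 3(t+1)^{\gamma-1} - 1$. A first key lemma is that $\wrap$ preserves odd girth at least $2t+3$: since $w_t$ is a leaf and each $w_j$ with $j \ge 2$ is a cut vertex of the pendant path, only $w_1$ lies on any cycle; a cycle avoiding $w_1$ projects via $(v, i) \mapsto v$ to an odd closed walk in $\Gamma'$ of the same length, hence length at least $2t + 3$; a cycle through $w_1$ contains a subpath from $(u, t)$ to $(v, t)$ using $q$ zigzag and $p$ level-$0$ edges, where height-balance forces $q$ even and hence $p$ odd for an odd cycle, and reaching level $0$ and returning requires $q \geq 2t$, giving cycle length at least $2t + 3$.

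For the base case $\gamma = 1$, the unique dominator $v_1$ is universal; triangle-freeness (which holds since $t \ge 1$) forces $N(v_1)$ to be independent, so $G$ is bipartite and homomorphs to $K_2 = \Gamma_1$. For the inductive step, fix a minimum dominating set $\{v_1, \ldots, v_\gamma\}$ and BFS layers $L_i = \{u : d_G(u, v_\gamma) = i\}$. The $\C_{2t+3}$-freeness of $G$ implies that $L_0, L_1, \ldots, L_{t+1}$ are independent sets (an edge inside $L_i$, together with two shortest paths to $v_\gamma$, yields an odd closed walk of length $2i+1$, containing an odd cycle of length at most $2t + 3$). I would then define $\phi: V(G) \to V(\Gamma_\gamma)$ by sending near layers to the apex path, $\phi(u) = w_{t - i}$ for $u \in L_i$ with $0 \le i \le t - 1$, and sending the remaining vertices by $\phi(u) = (\psi(u), \ell(u))$, where $\psi$ arises from the inductive hypothesis applied to an auxiliary graph $G^*$ built from the far region (at distance $\geq t$ from $v_\gamma$) with domination number at most $\gamma - 1$, and $\ell(u) \in \{0, \ldots, t\}$ is a level function determined from $d_G(u, v_\gamma)$ in a zigzag pattern, with $\phi(L_t) \subseteq V(\Gamma_{\gamma-1}) \times \{t\}$ so that edges $L_{t-1} L_t$ realise apex-to-top-layer edges of the wrap.

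The main obstacle is the construction of $G^*$ and the consistent joint definition of $\ell$ and $\psi$. The naive choice $G^* := G[\bigcup_{i \ge t} L_i]$ can fail to have domination number at most $\gamma - 1$, since a dominator $v_j$ with $j < \gamma$ may itself lie in the near region $L_0 \cup \cdots \cup L_{t-1}$ and be absent from $V(G^*)$, while its former neighbours in the independent set $L_t$ cannot dominate each other within $G^*$. Overcoming this requires either identifying suitable surrogate dominators inside $G^*$, or working with a slightly different auxiliary graph (for example, contracting $N[v_j]$ for each problematic $v_j$ to a virtual vertex while preserving $\C_{2t+3}$-freeness). Once $\psi$ is in hand, the level $\ell$ is essentially dictated by the BFS structure, but verifying its consistency with $\psi$ on BFS layers beyond $L_{t+1}$ — where internal edges may appear, since $L_i$ is only guaranteed independent for $i \le t + 1$ — is the technically most delicate step, as it corresponds to $\psi$ restricted to such deep layers being a genuine homomorphism into $\Gamma_{\gamma-1}$, which must follow from the inductive construction of $G^*$.
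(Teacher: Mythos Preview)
Your overall strategy—induct on $\gamma$ and at each step pass to a Mycielskian-type extension that preserves odd girth—is exactly the paper's, and your odd-girth argument for $\wrap$ is correct. The genuine gaps are the two you yourself flag, and both come from overengineering the decomposition of $G$.

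The paper's key simplification is to peel off only the \emph{single} independent set $I \coloneqq N(a_k)$, not the first $t$ BFS layers, and to use the standard $t$-fold Mycielskian $M_t(\Gamma')$ with one apex vertex $r$ rather than an apex path. The extension lemma is then clean: if $V(G) = U \sqcup I$ and $I, N(I), N^2(I), \dots, N^t(I)$ are all independent in $G$, then any homomorphism $\phi\colon G[U] \to \Gamma'$ lifts to $G \to M_t(\Gamma')$ by sending $I \mapsto r$, sending $x \in N^i(I)$ to $(\phi(x),i)$ for $1 \le i \le t$, and sending every vertex at distance $>t$ from $I$ to $(\phi(x),t{+}1)$. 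The independence hypotheses hold because $N^i(I) \subseteq N^{i+1}(a_k) \cup \{a_k\}$, and an edge inside $N^j(a_k)$ would yield a closed odd walk of length $2j+1 \le 2t+3$.

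This dissolves both of your obstacles at once. First, your level function $\ell$ is replaced by $\min(d(\cdot,I),\,t{+}1)$, and no ``zigzag consistency on deep layers'' is needed: every vertex at distance $>t$ from $I$ simply lands in the top layer, which carries a full copy of $\Gamma'$, so internal edges there are handled by $\phi$ alone. (Your scheme fails here for $t \ge 3$: layers $L_i$ with $t+2 \le i \le 2t-1$ can have internal edges yet would be sent to an independent level of $\wrap$.) Second, the domination issue becomes much milder: the inductive hypothesis is applied to $G_0 \coloneqq G - N[a_k]$, and every vertex of $G_0$ is dominated in $G$ by some $a_j$ with $j<k$; if some such $a_j$ happens to lie in $N(a_k)$, one simply applies the hypothesis to $G[V(G_0) \cup \{a_1,\dots,a_{k-1}\}]$ and restricts. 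By contrast, deleting all of $L_0,\dots,L_{t-1}$ as you propose can remove several dominators with no easy repair, which is precisely why your surrogate-dominator and contraction ideas are needed but hard to carry out.
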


Very recently, similar results to \cref{thm:C3C5C7,thm:bounded domination} were proved by Huang, Liu, Rong and Xu \cite{2502.09576}. In particular, one of their results \cite[Theorem 1.2]{2502.09576} establishes \cref{conj:no C7} under the added assumption that $G$ has bounded VC dimension. Here, we recall that a set $S \subseteq V(G)$ is \emph{shattered} if, for all $T \subseteq S$, there is a vertex adjacent to all vertices in $T$, but not adjacent to any vertex in $S \setminus T$, and that the \emph{VC dimension} of $G$ is defined as the maximum size of a shattered subset. VC dimension is a natural and widely studied notion of ``bounded complexity'' for a graph, and the result of Huang, Liu, Rong and Xu demonstrates that this assumption allows one to prove \cref{conj:no C7}. As it turns out, if a graph has bounded VC dimension and linear minimum degree, then it has bounded domination number (see \cref{lem:VC implies domination}), hence \cref{thm:bounded domination} gives a short alternative proof of \cite[Theorem 1.2]{2502.09576}.
\begin{corollary}\label{cor:bounded VC}
    Let $G$ be a $\C_{2t+3}$-free graph with minimum degree at least $\delta\abs G$ and VC dimension at most $d$. Then $G$ is homomorphic to a $\C_{2t+1}$-free graph $\Gamma$ with $\abs\Gamma \leq 3 \cdot (t+1)^{\frac{8d}{\delta} \log \frac{8d}{\delta}}$.
\end{corollary}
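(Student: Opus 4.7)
The corollary is essentially a direct composition of \cref{thm:bounded domination} with the auxiliary result \cref{lem:VC implies domination} that the paper has already signposted. So my plan is to chain these two statements together, verifying only that the constants match up as claimed.

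First, I would invoke \cref{lem:VC implies domination} to control the domination number. The statement we want is that any graph of VC dimension at most $d$ and minimum degree at least $\delta\abs G$ has domination number at most $\frac{8d}{\delta}\log\frac{8d}{\delta}$. The natural way to prove such a bound is via the $\varepsilon$-net theorem of Haussler--Welzl: apply it to the set system consisting of all closed neighbourhoods $N[v]$ of $G$ on the ground set $V(G)$. Because the VC dimension of $G$ is at most $d$, the VC dimension of this neighbourhood set system is bounded by $d$ (up to an additive constant), and the minimum degree condition says that every set in the system has ``measure'' at least $\delta$. The $\varepsilon$-net theorem then yields a hitting set of the promised size, which by definition is a dominating set in $G$.

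Second, with the bound $\gamma(G) \leq \frac{8d}{\delta}\log\frac{8d}{\delta}$ in hand, I would directly apply \cref{thm:bounded domination} to the $\C_{2t+3}$-free graph $G$. This gives a homomorphism from $G$ to a $\C_{2t+1}$-free graph $\Gamma$ with
\[
    \abs \Gamma \leq 3\cdot (t+1)^{\gamma(G)-1}-1 \leq 3\cdot (t+1)^{\frac{8d}{\delta}\log\frac{8d}{\delta}},
\]
where the last inequality is immediate from monotonicity of $x\mapsto(t+1)^x$. This already matches the statement, so the proof is complete.

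The only real content is the content of the two results being combined; the corollary itself requires nothing beyond these and the standard $\varepsilon$-net bound. If there is any obstacle, it is purely bookkeeping: making sure that the exact constant $8$ in $\frac{8d}{\delta}\log\frac{8d}{\delta}$ is what the Haussler--Welzl argument delivers for the neighbourhood hypergraph (as opposed to a slightly different constant coming from a suboptimal version of the theorem). This is a minor point and can be handled by quoting any standard form of the $\varepsilon$-net theorem and, if necessary, absorbing small factors into the exponent without changing the qualitative shape of the bound.
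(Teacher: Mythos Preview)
Your proposal is correct and follows exactly the same route as the paper: invoke \cref{lem:VC implies domination} to bound $\gamma(G)$ by $\frac{8d}{\delta}\log\frac{8d}{\delta}$, then plug into \cref{thm:bounded domination}. The only cosmetic difference is that in sketching the lemma you use closed neighbourhoods $N[v]$ rather than the open neighbourhoods $N(v)$ the paper uses, but since the lemma is already established in the paper this is immaterial.
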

In addition to being substantially shorter, our proof of \cref{cor:bounded VC} yields much stronger quantitative bounds than the proof in \cite{2502.09576}. Indeed, the proof in \cite{2502.09576} produces such a $\Gamma$ whose size is a tower of twos of height $t+1$, and whose top-most hyperexponent is $\delta^{-O(d)}$.

\subsection{Asymmetric approximate homomorphisms}
We now turn to our study of asymmetric versions of \cref{thm:approx hom}. Here, the most fundamental question does not have to do with minimum degree restrictions, but rather with the quantitative aspects of \cref{thm:approx hom}: how large is $M_F(\varepsilon)$ as a function of $\varepsilon$?

As mentioned previously, the standard proof of \cref{thm:approx hom} follows the proof of the graph removal lemma, and these two results are in fact closely related. As such, we recall the statement of the graph removal lemma before proceeding.
\begin{theorem}[Graph removal lemma \cite{MR519318,MR1251840,MR1404036}]\label{thm:removal}
    For every graph $F$ and every $\e>0$, there exists some $\delta>0$ such that the following holds. If a graph $G$ is $\varepsilon$-far from $F$-free, then $G$ contains at least $\delta \abs G^{\abs F}$ copies of $F$.
\end{theorem}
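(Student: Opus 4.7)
The standard approach, going back to Ruzsa and Szemer\'edi, is to combine Szemer\'edi's regularity lemma with a counting lemma. First I would fix parameters $\eta, d > 0$ (to be determined in terms of $\e$ and $F$) and apply the regularity lemma to $G$ with regularity parameter $\eta$, obtaining an equipartition $V(G) = V_1 \cup \dots \cup V_k$ with $k = k(\eta)$ parts, all but at most $\eta\binom{k}{2}$ of whose pairs are $\eta$-regular. I would then pass to a cleaned subgraph $G' \subseteq G$ by deleting (i) all edges inside any single part $V_i$, (ii) all edges between irregular pairs of parts, and (iii) all edges between regular pairs of density less than $d$. A routine count shows that by choosing $\eta$ and $d$ sufficiently small in terms of $\e$, the total number of deleted edges is at most $\e\abs G^2$. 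Since $G$ is $\e$-far from $F$-free, the cleaned graph $G'$ must therefore still contain a copy of $F$.

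Fix such a copy, and suppose its vertices lie in parts $V_{i_1}, \dots, V_{i_{\abs F}}$. By construction, whenever two vertices of this copy are adjacent, the corresponding pair of parts is $\eta$-regular of density at least $d$. I would then invoke the counting (or ``graph embedding'') lemma for regular pairs: for any such compatible assignment of parts, the number of copies of $F$ with $v_j \in V_{i_j}$ for each $j$ is at least $(d - O_F(\eta))^{e(F)}\prod_{j=1}^{\abs F}\abs{V_{i_j}}$, provided $\eta$ is small enough relative to $d$ and $\abs F$. Since each $\abs{V_{i_j}}$ is of order $\abs G/k$, this yields at least $\delta \abs G^{\abs F}$ copies of $F$ for some $\delta = \delta(\e, F) > 0$, completing the proof.

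The main obstacle is the chain of parameter dependencies: $\eta$ must be chosen small in terms of $d$ and $\abs F$ \emph{before} the regularity lemma is invoked, but $k$ is then a tower of height $\poly(1/\eta)$, and the final bound $\delta \approx (d/2)^{e(F)}/k^{\abs F}$ decays as an inverse tower in $1/\e$. Technically, the most delicate ingredient is the counting lemma itself, which is proved by inductively peeling off one vertex of $F$ at a time and using the regularity condition to ensure that the sizes of common neighbourhoods remain of the expected order; this is precisely where the smallness of $\eta$ relative to $d$ and $\abs F$ is consumed. An alternative that avoids the regularity lemma would be the ``cylinder removal'' or weak-regularity approach, which might in principle give better quantitative bounds, but the classical proof outlined above is the most natural and the one best suited to the subsequent discussion of approximate homomorphisms.
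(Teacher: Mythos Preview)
Your sketch is correct and is exactly the classical Ruzsa--Szemer\'edi argument. However, the paper does not actually prove \cref{thm:removal}: it is stated as a cited result (with references to Ruzsa--Szemer\'edi, Alon--Duke--Lefmann--R\"odl--Yuster, and F\"uredi) and used as a black box throughout, so there is no proof in the paper to compare against.
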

Despite its simple statement, this is a very deep result. All known proofs are rather complex, involving tools and ideas related to Szemer\'edi's regularity lemma \cite{MR540024}. This complexity is related to our best known bounds on $\delta$ as a function of $\e$: the best known proofs of the graph removal lemma \cite{MR2811609,MR3939561,MR3156927} yield an upper bound on $1/\delta$ which is of tower type, with height $O_F(\log \frac 1 \e)$. In the other direction, it is known \cite{MR519318,MR1945375} that $1/\delta$ is at least super-polynomial in $1/\e$, namely $\frac 1 \delta \geq (\frac 1 \e)^{\Omega_F(\log \frac 1 \e)}$, whenever $F$ is non-bipartite. These upper and lower bounds are extremely far apart, and it remains a major open problem to narrow the gap.

The standard proof of \cref{thm:removal} using Szemer\'edi's regularity lemma also immediately yields \cref{thm:approx hom}. Unfortunately, this proof only supplies tower-type bounds on $M_F(\varepsilon)$ as a function of $\varepsilon$. In fact, as proved by Hoppen--Kohayakawa--Lang--Lefmann--Stagni \cite{MR4132523} (for the upper bound) and by Fox--Zhao \cite{FZ} (for the lower bound), the bounds in these two theorems are closely related. The formal results are somewhat technical, but roughly speaking, they imply that the best constant $M_F(\varepsilon)$ in \cref{thm:approx hom} is exponential in $1/\delta_F(\varepsilon)$, where $\delta_F(\varepsilon)$ is the best constant in \cref{thm:removal}. In particular, the Fox--Zhao result \cite{FZ} implies that if $F$ is not bipartite, then
\begin{equation}\label{eq:superexp}
	M_F(\varepsilon) \geq 2^{(1/\varepsilon)^{\Omega_F(\log (1/\varepsilon))}},
\end{equation}
that is, that $M_F(\varepsilon)$ is larger than any function of the form\footnote{We use the notation $\poly(\e)$ to denote any function $f$ of the form $f(\e) = c \e^C$ for some absolute constants $c,C>0$.} $2^{1/{\poly(\varepsilon)}}$.

Here we are interested in an asymmetric version of this problem. Namely, we study the following function:
\begin{definition}
    For graphs $F,H$ with $H \rightarrow F$ and for $\varepsilon > 0$, let $M_{F,H}(\varepsilon)$ be the smallest $M$ such that every $H$-hom-free graph has an $\varepsilon$-approximate homomorphism to an $F$-hom-free graph on at most $M$ vertices.
\end{definition}

The case $H=F$ corresponds to \cref{thm:approx hom}, and the existence of $M_{F,H}(\varepsilon)$ in general is an immediate consequence of \cref{thm:approx hom}.

Our results on this topic imply that the function $M_{F,H}(\varepsilon)$ can have a wide variety of interesting behaviours, even for very simple graphs $F,H$ such as odd cycles. In particular, the following is a consequence of results that we shortly discuss in greater detail.
\begin{theorem}\label{thm:odd cycles}
	Let $\ell \geq 3$ be odd. We have that
	\[
		M_{K_3, C_\ell}(\varepsilon) = 
		\begin{cases}
			\text{superexponential} & \text{if }\ell=3,\\
			2^{\poly(1/\varepsilon)} & \text{if } \ell \in \{5,7\},\\
			O(\frac 1\varepsilon) & \text{if }\ell \geq 9.
		\end{cases}
	\]
\end{theorem}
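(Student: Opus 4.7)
The plan is to handle the three regimes separately, since each corresponds to a genuinely different phenomenon.

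For $\ell = 3$, the quantity $M_{K_3, C_3}(\varepsilon)$ is exactly the symmetric $M_{K_3}(\varepsilon)$ of \cref{thm:approx hom}: the Fox--Zhao lower bound \eqref{eq:superexp} (already stated in the excerpt) immediately supplies the super-exponential lower bound, and the regularity-based proof of \cref{thm:approx hom} furnishes a tower-type upper bound, which is in particular super-exponential. So nothing new is needed in this regime.

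For $\ell = 7$, a $C_7$-hom-free graph has odd girth at least $9$, i.e., is $\C_7$-free. The first step is a degree-cleaning: delete every vertex of degree less than $\varepsilon|G|/2$, destroying at most $\varepsilon|G|^2/2$ edges and producing a subgraph $G'\subseteq G$ that is still $\C_7$-free and has minimum degree at least $\varepsilon|G|/2$. Applying \cref{thm:C3C5C7} with $t=1$ and $\delta=\varepsilon/2$ then yields a homomorphism of $G'$ into a triangle-free graph of size at most $2^{4/\varepsilon}$, which extends to an $\varepsilon$-approximate homomorphism of $G$ into the same target. For $\ell=5$, this direct appeal fails because $C_5$-hom-freeness means odd girth $\geq 7$, which is strictly weaker than the $\C_7$-freeness demanded by \cref{thm:C3C5C7}. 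Here I would instead apply the Frieze--Kannan weak regularity lemma to partition $G$ into a reduced graph $R$ of size $2^{\poly(1/\varepsilon)}$, and use the $C_5$-hom-freeness of $G$ through an entropy or abundance-type counting argument to prune the few ``bad'' edges of $R$ and certify that the purified reduced graph is triangle-free. The matching exponential lower bounds for both $\ell\in\{5,7\}$ should come from blown-up generalised Mycielskian constructions, exhibiting $C_\ell$-hom-free graphs that are exponentially far from every blowup of a small triangle-free graph.

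For $\ell\geq 9$, monotonicity gives $M_{K_3, C_\ell}(\varepsilon)\leq M_{K_3, C_9}(\varepsilon)$ (every $C_\ell$-hom-free graph is $C_9$-hom-free for $\ell\geq 9$), so it suffices to treat $\ell=9$, i.e., graphs of odd girth at least $11$. If $|E(G)|\leq \varepsilon |G|^2/2$, the constant map to a single vertex is already an $\varepsilon$-approximate homomorphism to a triangle-free graph of size $1$. Otherwise, after a degree-cleaning as above to attain linear minimum degree, one should exploit the very large odd girth via a refined structural argument, likely via the Brandt--Thomass\'e description of Vega graphs or a direct analysis of the generalised Mycielskian construction, to produce a homomorphism into a triangle-free graph of size $O(1/\varepsilon)$.

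The main obstacle, in my view, is the sharp $O(1/\varepsilon)$ bound in the third regime: \cref{thm:C3C5C7} and the standard proof of \cref{thm:approx hom} only yield exponential bounds, and extracting linear dependence should require structural ideas that are genuinely special to the very-large-odd-girth regime, most plausibly coming from the quantitative structure of Vega graphs. A secondary obstacle is the $\ell=5$ case, where the $C_5$-hom-freeness hypothesis does not feed directly into the minimum-degree-based \cref{thm:C3C5C7} and must instead be combined with weak regularity and entropy tools listed in the abstract.
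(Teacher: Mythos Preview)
Your treatment of $\ell=3$ is fine, and your $\ell=7$ upper bound via degree-cleaning followed by \cref{thm:C3C5C7} is a legitimate alternative to the paper's route (the paper instead passes through bounded domination number via \cref{thm:bounded domination} and \cref{lem:pull out vtxs}, which also handles $\ell=5$ uniformly). Your $\ell=5$ upper bound sketch via Frieze--Kannan plus an abundance-type counting is essentially the content of \cref{prop:exponential approx hom} combined with the $K_3$-abundance of $C_5$, so that part is on track, if vague.

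There are, however, two genuine gaps.

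First, the exponential \emph{lower} bound for $\ell\in\{5,7\}$ does not come from generalised Mycielskians. Mycielskians are used in the paper only for the \emph{upper} bounds (inside the proofs of \cref{thm:C3C5C7} and \cref{thm:bounded domination}). The lower bound requires an entirely different object: the paper takes a graph $G$ in which every edge lies in a unique triangle (built from a high-girth $3$-uniform hypergraph), and forms the blowup-type graph $G^\star$ of \cref{con:HG}, where each triangle of $G$ becomes a $6$-cycle. One then shows via an entropy argument (\cref{prop:no-approx-hom}) that $G^\star$ has no $\varepsilon$-approximate homomorphism to a small triangle-free graph, and separately verifies (\cref{cor:C5 C7}) that $T^\star$ has odd girth at least $9$ for every $K_3$-forest $T$, so that $G^\star$ is $C_5$- and $C_7$-hom-free. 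Nothing in the Mycielskian construction gives you this; the entropy argument is the real content, and it is not hinted at in your proposal.

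Second, the $O(1/\varepsilon)$ upper bound for $\ell\geq 9$ has nothing to do with Vega graphs or Mycielskians; it is completely elementary and comes from \cref{thm:subdivision blowup polynomial bound}. The key observation is that $C_9 = K_3^{\bullet\bullet}$, the $2$-subdivision of a triangle. One greedily extracts vertices $v_1,\dots,v_k$ with pairwise disjoint neighbourhood chunks $S_1,\dots,S_k$ of size $\Theta(\varepsilon n)$ each (so $k=O(1/\varepsilon)$), and defines $\Gamma$ on $\{s_1,\dots,s_k\}$ by putting $s_is_j\in E(\Gamma)$ whenever $e_G(S_i,S_j)>0$. The natural map $V(G)\to V(\Gamma)$ is an $\varepsilon$-approximate homomorphism, and a triangle in $\Gamma$ would, by picking one witness edge for each side and the apex vertices $v_i$, yield a homomorphic copy of $C_9$ in $G$. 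This is the entire argument; no structure theory of dense triangle-free graphs is needed, and invoking Brandt--Thomass\'e here is a red herring.
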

We remark that the trichotomy in \cref{thm:odd cycles} is, to us, quite unexpected. While there are a vast number of results in the literature demonstrating that different behaviors hold for triangles than for longer odd cycles, this is the first example we are aware of where there is another ``phase transition'', between cycles of length $7$ and $9$. 
% Moreover, the jump in behaviours is also quite surprising to us, with rather large gaps between a superexponential bound, an exponential bound, and an at-most linear bound.
% \eh{I suggest the following last sentence above: Moreover, the jumps in behaviour are quite drastic going from linear all the way exponential and then on to superexponential.}
Moreover, the jumps in behaviour are quite drastic, going from linear all the way exponential, and then on to superexponential.

We now discuss the statement of \cref{thm:odd cycles} in greater detail.
The case $\ell=3$ is simply the symmetric case where, as discussed in \eqref{eq:superexp}, superexponential bounds were proved by Fox--Zhao \cite{FZ}. However, for $\ell \geq 5$, these results are new. In particular, we prove both upper and lower bounds of the form $2^{\poly(1/\varepsilon)}$ when $\ell \in \{5,7\}$, and prove a linear upper bound when $\ell \geq 9$. It is not hard to prove, as we do in \cref{prop:poly LB}, that $M_{F,H}(\varepsilon)\geq \Omega((\frac 1\varepsilon)^{\frac 12})$ for all $F,H$ with $H$ non-bipartite\footnote{If $H$ is bipartite then an $H$-hom-free graph has no edges, so trivially $M_{F,H}(\varepsilon) = 1$.}, hence the linear upper bound for $\ell \geq 9$ is also close to best possible. Each of these bounds is actually a special case of a more general result, as we now discuss.

First, we remark that the upper bound on $M_{K_3,C_5}(\varepsilon)$ actually follows from \cref{thm:bounded domination}. Indeed, it is not hard to show (as we do in \cref{lem:pull out vtxs}) that from any graph $G$, we may delete at most $\varepsilon \abs G^2$ edges to obtain a graph $G'$ with domination number at most $3/\varepsilon$. In particular, this yields an $\varepsilon$-approximate homomorphism from $G$ to a subgraph which has domination number at most $3/\varepsilon$. Since $G'$ is also $C_5$-hom-free if $G$ is, we may apply \cref{thm:bounded domination} and obtain that $M_{K_3,C_5}(\varepsilon)\leq 2^{O(1/\varepsilon)}$. The same argument shows that $M_{K_3,C_7}(\varepsilon) \leq 2^{O(1/\varepsilon)}$. 

However, this argument is very special to the case of odd cycles, since our proof of \cref{thm:bounded domination} is restricted to this case. Nevertheless, all of the new bounds encapsulated in \cref{thm:odd cycles} actually hold in much greater generality than for families of odd cycles. We now turn to discuss the statements and proofs of these more general statements, each of which yields one of the new bounds in \cref{thm:odd cycles} as a special case. We begin by continuing the discussion above, and explaining an alternative proof of the exponential upper bound on $M_{K_3,C_\ell}(\varepsilon)$, 
% Because of this, we now discuss an alternative proof of the exponential upper bounds on $M_{K_3,C_\ell}(\varepsilon)$, 
which is far more general. The basic  observation is that $M_{F,H}(\varepsilon)$ can be upper-bounded in terms of the parameter-dependence in the asymmetric $(F,H)$ removal lemma, which we now introduce.

It is well-known and easy to see that the graph removal lemma implies an asymmetric version of the same result. It states that for any pair of graphs $(F,H)$ with $H \to F$, if a graph $G$ is $\varepsilon$-far from $F$-free, then it contains at least $\delta \abs G^{\abs H}$ copies of $H$, for some $\delta>0$ depending only on $F,H,$ and $\varepsilon$. The deduction of the asymmetric statement from \cref{thm:removal} naturally incurs the same tower-type bounds that are the best known in \cref{thm:removal}. However, a recent line of work \cite{Csaba,GSW,GHIM,2501.15861} has demonstrated that in certain cases, the asymmetric removal lemma has \emph{much} better bounds, namely in some cases polynomial bounds. Following \cite{GSW}, we say that \emph{$H$ is $F$-abundant} if there are polynomial bounds in the asymmetric $(F,H)$ removal lemma, that is, if every graph $G$ which is $\varepsilon$-far from $F$-free contains at least $\poly(\varepsilon) \abs G^{\abs H}$ copies of $H$.

With these definitions in place, we may state our bound connecting $M_{F,H}(\varepsilon)$ to the bounds in the asymmetric $(F,H)$ removal lemma. This generalises a theorem of Hoppen--Kohayakawa--Lang--Lefmann--Stagni \cite[Theorem 1.4]{MR4132523}, and follows from the same proof technique.
\begin{proposition}\label{prop:exponential approx hom}
	Let $F,H$ be graphs and $\delta:(0,1) \to (0,1)$ be a function  with the property that every graph $G$ which is $\varepsilon$-far from $F$-hom-free contains at least $\delta(\varepsilon)\cdot {\abs G}^{\abs H}$ copies of $H$.
	Then $M_{F,H}(\varepsilon) \leq 2^{K^2}$, where 
	\begin{equation}\label{eq:Frieze-Kannan m}
	K \coloneqq \frac{5e(H)(2/\e)^{e(H)}}{\delta(\varepsilon/2)}.
	\end{equation}
\end{proposition}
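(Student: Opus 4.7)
The plan is to apply the Frieze--Kannan weak regularity lemma to $G$ to produce a reduced graph of controlled size, and then verify that this reduced graph is $F$-hom-free by using the hypothesised asymmetric removal-type inequality. Throughout, write $n = |G|$.

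Setting $\eta \coloneqq 1/K$ and applying Frieze--Kannan to $G$ produces an equitable partition $V_1, \dots, V_m$ of $V(G)$ with $m \le 2^{K^2}$ (up to absolute constants absorbed into the factor of $5$ in the definition of $K$). Define $\Gamma$ on vertex set $[m]$ by placing an edge $ij$ precisely when $d_G(V_i, V_j) \ge \varepsilon/2$, and let $\pi\colon V(G) \to [m]$ be the quotient map. A standard cleaning argument shows that $\pi$ is an $\varepsilon$-approximate homomorphism to $\Gamma$: the only edges of $G$ not preserved are either intra-part (contributing at most $n^2/(2m) \le \varepsilon n^2/4$, since our choice of $K$ guarantees $m \ge 2/\varepsilon$) or between blocks of density less than $\varepsilon/2$ (contributing at most $\varepsilon n^2/4$), so at most $\varepsilon n^2$ in total.

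The main step is to show that $\Gamma$ is $F$-hom-free. Suppose for contradiction there is a homomorphism $\phi\colon F \to \Gamma$. Composing with the assumed homomorphism $H \to F$ yields $\psi\colon H \to \Gamma$ that uses only blocks of density at least $\varepsilon/2$ in $G$. The Frieze--Kannan counting lemma applied to $H$ asserts
\[
\bigl|\hom(H, G) - \hom(H, \widetilde G)\bigr| \le e(H)\,\eta\,n^{|V(H)|},
\]
where $\widetilde G$ is the weighted block-constant reduced blow-up of $G$. Following the sampling argument of Hoppen--Kohayakawa--Lang--Lefmann--Stagni, the hypothesis of the proposition can be transferred to $\widetilde G$: the rigid structure imposed by $\psi$ forces any concrete graph sampled from $\widetilde G$ to be $\varepsilon/2$-far from $F$-hom-free, so the hypothesis supplies $\hom(H, \widetilde G) \ge \delta(\varepsilon/2)\, n^{|V(H)|}$. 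The choice $\eta = 1/K$ is engineered so that $e(H)\eta \le \tfrac{1}{5}\delta(\varepsilon/2)$, hence $\hom(H, G) \ge \tfrac{4}{5}\delta(\varepsilon/2)\, n^{|V(H)|} > 0$, contradicting the $H$-hom-freeness of $G$.

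The main obstacle is precisely the step of lower-bounding $\hom(H, \widetilde G)$ by $\delta(\varepsilon/2)\, n^{|V(H)|}$: using only the single homomorphism $\psi$, one naively obtains the much weaker bound $(\varepsilon/2)^{e(H)}\, n^{|V(H)|}/m^{|V(H)|}$, which is useless when $m$ may be as large as $2^{K^2}$. Overcoming this requires transferring the hypothesis from graphs to the weighted graphon $\widetilde G$, which is precisely the technique of \cite{MR4132523} that the present proposition generalises; the numerical factors in the explicit formula for $K$ are chosen to make the final inequality chain close exactly.
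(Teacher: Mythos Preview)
Your argument has a genuine gap at exactly the point you yourself flag as the ``main obstacle,'' and the fix is not the sampling argument you sketch. The reduced graph $\Gamma$ (the high-density pair graph) need not be $F$-hom-free at all, so the claim you set out to prove is simply false in general. A single homomorphism $\psi:H\to\Gamma$ only guarantees that $|F|$ parts of the partition carry a weighted $F$-copy with edge weights $\ge\varepsilon/2$; the number of edges of $G$ supported on those $|F|$ parts is at most $\binom{|F|}{2}(n/m)^2$, which is tiny compared to $\varepsilon n^2$, so no graph sampled from $\widetilde G$ can be forced to be $\varepsilon/2$-far from $F$-hom-free on this basis, and the hypothesis cannot be invoked as you propose.

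The paper's proof (and, incidentally, the argument of \cite{MR4132523} you cite) avoids this entirely by applying the hypothesis not to $G$ or to $\widetilde G$, but to the \emph{unweighted} reduced graph $\Gamma_0$ on vertex set $[M]$. If $\Gamma_0$ were $\varepsilon/2$-far from $F$-hom-free, the hypothesis would give $\ge \delta(\varepsilon/2)M^{|H|}$ copies of $H$ in $\Gamma_0$; summing the contribution $(\varepsilon/2)^{e(H)}$ of each such copy to $t(H,G_{\mathcal P})$ and applying the counting lemma gives $t(H,G)>0$, a contradiction. This shows only that $\Gamma_0$ is $\varepsilon/2$-\emph{close} to $F$-hom-free, so one passes to a maximum $F$-hom-free subgraph $\Gamma\subseteq\Gamma_0$ and absorbs the $\le \tfrac{\varepsilon}{2}M^2$ deleted edges as a third error term in the approximate-homomorphism count. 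Your two-term cleaning ($n^2/(2M)+\varepsilon n^2/4$) leaves no room for this third term; the paper's budget is $n^2/(2M)+\varepsilon n^2/4+\varepsilon n^2/2\le \varepsilon n^2$.
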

If $H$ is $K_k$-abundant, then the assumption of \cref{prop:exponential approx hom} holds with $\delta(\varepsilon) = \poly(\varepsilon)$, since being $K_k$-hom-free is the same as being $K_k$-free. Therefore, we obtain the following corollary of \cref{prop:exponential approx hom}.
\begin{corollary}
	If $H$ is $K_k$-abundant then $M_{K_k,H}(\varepsilon) \leq 2^{\poly(1/\varepsilon)}$.
\end{corollary}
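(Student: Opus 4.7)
The plan is to derive this corollary as an immediate consequence of \cref{prop:exponential approx hom}. The one small conceptual point to check is that the abundance hypothesis, which is phrased in terms of being $\varepsilon$-far from $K_k$-free, matches the hypothesis of \cref{prop:exponential approx hom}, which is phrased in terms of being $\varepsilon$-far from $K_k$-hom-free. But these two properties coincide: since $K_k$ is a complete graph, every homomorphism $K_k \to \Gamma$ must be injective onto a clique, and so a graph is $K_k$-hom-free if and only if it is $K_k$-free. In particular, being $\varepsilon$-far from $K_k$-hom-free is the same as being $\varepsilon$-far from $K_k$-free.

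Given this observation, the assumption that $H$ is $K_k$-abundant supplies a function $\delta(\varepsilon) = c \varepsilon^C$ (for some absolute constants $c, C > 0$) for which every graph $G$ that is $\varepsilon$-far from $K_k$-hom-free contains at least $\delta(\varepsilon) \abs{G}^{\abs H}$ copies of $H$. Plugging this function into \cref{prop:exponential approx hom} with $F = K_k$, the constant $K$ defined in \eqref{eq:Frieze-Kannan m} becomes
\[
K = \frac{5 e(H) (2/\varepsilon)^{e(H)}}{c(\varepsilon/2)^{C}} = \poly(1/\varepsilon),
\]
where the implicit constants depend only on $H$ (through $e(H)$) and on the abundance constants $c, C$. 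Therefore $M_{K_k, H}(\varepsilon) \le 2^{K^2} \le 2^{\poly(1/\varepsilon)}$, as required.

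There is no real obstacle here beyond recording the clique observation above and performing the routine substitution; all of the work is encapsulated in the statement and proof of \cref{prop:exponential approx hom}, and in whatever polynomial bound is supplied by the definition of $K_k$-abundance.
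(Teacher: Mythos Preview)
Your proof is correct and matches the paper's own argument essentially verbatim: the paper simply notes that $K_k$-hom-freeness coincides with $K_k$-freeness and then applies \cref{prop:exponential approx hom} with $\delta(\varepsilon)=\poly(\varepsilon)$. There is nothing to add.
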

Gishboliner, Shapira, and Wigderson \cite{GSW} proved that $C_\ell$ is $K_3$-abundant for all odd $\ell \geq 5$, hence \cref{prop:exponential approx hom} implies that $M_{K_3,C_\ell}(\varepsilon) \leq 2^{\poly(1/\varepsilon)}$, as claimed in \cref{thm:odd cycles}. Note that this argument gives a weaker bound than the one using \cref{thm:bounded domination} (although both are of the form $2^{\poly(1/\varepsilon)}$), but this argument is much more general. For example, Gir\~ao, Hurley, Illingworth, and Michel \cite{GHIM} proved that the Petersen graph $P$ is $K_3$-abundant, hence \cref{prop:exponential approx hom} also implies that $M_{K_3,P}(\varepsilon) \leq 2^{\poly(1/\varepsilon)}$.

Both \cref{thm:bounded domination,prop:exponential approx hom} only yield exponential upper bounds on $M_{F,H}(\varepsilon)$ (and only in certain cases). However, in some cases, such as for $M_{K_3,C_\ell}(\varepsilon)$ with $\ell \geq 9$, we can obtain much stronger upper bounds. To state these results in full generality, we recall the following definition. Given a graph $F$, its \emph{$2$-subdivision} $F^{\bullet\bullet}$ is obtained from $F$ by adding two new vertices on every edge. Note that $F^{\bullet \bullet}$ is homomorphic to $F$ for any graph $F$.
\begin{theorem}\label{thm:subdivision blowup polynomial bound}
	Let $F,H$ be graphs with $H \to F^{\bullet \bullet}$. Then $M_{F,H}(\varepsilon) = O(\frac 1 \varepsilon)$.
\end{theorem}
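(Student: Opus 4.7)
The plan rests on two features of the $2$-subdivision. First, $F^{\bullet\bullet}$ is always $3$-chromatic: color $V(F) \subseteq V(F^{\bullet\bullet})$ with color $1$ and, for each edge $uv$ of $F$, color the two subdivision vertices on it with $2$ and $3$. Combined with the hypothesis $H \to F^{\bullet\bullet}$, this means any $H$-hom-free graph $G$ is $F^{\bullet\bullet}$-hom-free, and hence \emph{triangle-free}: otherwise $F^{\bullet\bullet} \to K_3$ would compose with a triangle in $G$ to give $F^{\bullet\bullet} \to G$ and hence $H \to G$. Second, each edge of $F$ corresponds to a path of length exactly three in $F^{\bullet\bullet}$, which suggests that walks of length three in $G$ should be what defines the edges of the target graph.

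To build this target, I would apply \cref{lem:pull out vtxs} to delete at most $\tfrac{\varepsilon}{2}\abs G^2$ edges from $G$ and obtain $G' \subseteq G$ with domination number $O(1/\varepsilon)$. Restricting to the non-isolated part of $G'$ and augmenting a small dominating set by one $G'$-neighbor per vertex upgrades it to a total dominating set $D$ of size $O(1/\varepsilon)$, so that every non-isolated vertex $v$ of $G'$ satisfies $N_{G'}(v) \cap D \ne \emptyset$. I then define $\phi \colon V(G) \to D$ by choosing $\phi(v) \in N_{G'}(v) \cap D$ for each non-isolated vertex of $G'$, and arbitrarily on isolated vertices (which have no $G'$-edges). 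Finally, let $\Gamma$ be the loopless simple graph on vertex set $D$ in which $dd' \in E(\Gamma)$ exactly when $d \ne d'$ and there is a walk of length three from $d$ to $d'$ in $G$; then $\abs \Gamma = O(1/\varepsilon)$ as required.

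It remains to verify the two desired properties. Any hypothetical homomorphism $f \colon F \to \Gamma$, together with the length-three walks certifying the edges in its image, assembles into a homomorphism $F^{\bullet\bullet} \to G$ by sending each $x \in V(F)$ to $f(x)$ and the two midpoints of each edge $uv \in E(F)$ to the two interior vertices of the chosen walk from $f(u)$ to $f(v)$; this contradicts $F^{\bullet\bullet}$-hom-freeness of $G$, so $\Gamma$ is $F$-hom-free. For any $vv' \in E(G')$, the sequence $\phi(v), v, v', \phi(v')$ is a walk of length three in $G$, so either $\phi(v) \ne \phi(v')$ and the edge is correctly routed to $\phi(v)\phi(v') \in E(\Gamma)$, or $\phi(v) = \phi(v')$ and the walk traces a triangle on the three distinct vertices $\phi(v), v, v'$ (distinct because $\phi(v) \in N_{G'}(v)$ forces $\phi(v) \ne v$ and $vv'$ being an edge forces $v \ne v'$), contradicting triangle-freeness. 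Only the $\le \tfrac{\varepsilon}{2}\abs G^2$ edges of $G \setminus G'$ can be misrouted, so $\phi$ is an $\varepsilon$-approximate homomorphism. The one real subtlety is the use of \emph{total} domination, which forces $\phi(v) \ne v$ and thereby lets the triangle-killing argument close cleanly; beyond that, the argument is essentially dictated by the two structural observations about $F^{\bullet\bullet}$ highlighted at the start.
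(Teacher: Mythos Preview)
Your approach is correct and shares its overall architecture with the paper's proof: both use \cref{lem:pull out vtxs} to identify $O(1/\varepsilon)$ ``centres'' together with their neighbourhoods, define $\Gamma$ on these centres, and verify $F$-hom-freeness of $\Gamma$ by assembling a homomorphism $F^{\bullet\bullet} \to G$ from any putative $F \to \Gamma$. The routes differ in two notable ways. First, you exploit the observation that $F^{\bullet\bullet} \to K_3$ to conclude that $G$ is triangle-free; the paper never uses this. Second, you define edges of $\Gamma$ via arbitrary length-$3$ walks in $G$ and let $\phi(v)$ be any $D$-neighbour of $v$, whereas the paper sends each $S_i$ rigidly to a single abstract vertex $s_i$ and declares $s_is_j$ an edge precisely when $e_G(S_i, S_j) > 0$. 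The paper's rigidity forces edges internal to an $S_i$ to be misrouted, and these must be counted separately via $\sum_i \binom{\abs{S_i}}{2}$; your triangle-freeness argument neatly sidesteps this by showing $\phi(v) \ne \phi(v')$ for every surviving edge $vv'$.

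There is one small imprecision. \cref{lem:pull out vtxs} does not quite yield a $G'$ whose non-isolated part has a small total dominating set \emph{in $G'$}: the centres $v_1,\dots,v_k$ may lie in $X$, in which case all their edges are deleted and they no longer dominate anything in $G'$. The cleanest fix is to replace $N_{G'}(v) \cap D$ with $N_G(v) \cap D$ when defining $\phi$, taking $D = \{v_1,\dots,v_k\}$ directly: this is all you actually use, since the walk $\phi(v), v, v', \phi(v')$ only needs to live in $G$, and the distinctness argument ($\phi(v) \ne v$ from the absence of self-loops, and $\phi(v) = \phi(v') \ne v'$ likewise) works identically. Alternatively, when forming $G'$ simply retain the edges between each $v_i$ and $S_i$; then $\{v_1,\dots,v_k\}$ really is a total dominating set of the non-isolated part of $G'$, and your argument runs verbatim.
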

Note that $K_3^{\bullet\bullet}= C_9$, and every odd cycle $C_\ell$ with $\ell \geq 9$ is homomorphic to $C_9$. Therefore, \cref{thm:subdivision blowup polynomial bound} in particular completes the upper bounds stated in \cref{thm:odd cycles}. We remark that the proofs of both \cref{thm:subdivision blowup polynomial bound,prop:exponential approx hom} yield efficient algorithms for constructing the $F$-hom-free graph $\Gamma$ as well as the approximate homomorphism $G \to \Gamma$. 

Therefore, all that remains to discuss is the exponential lower bound in \cref{thm:odd cycles}. Namely, we need to show that for many graphs $H$, including some $F$-abundant ones, the exponential dependence in \cref{prop:exponential approx hom} is necessary. For example, we will show that this is the case for $F = K_3$ and $H \in \{C_5,C_7\}$, completing the picture in \cref{thm:odd cycles}, but again our argument holds in much greater generality. This lower bound comes from a variant of the aforementioned construction of Fox and Zhao \cite{FZ}. In fact, we conjecture that this construction characterises the cases when $M_{F,H}(\varepsilon)$ is polynomial (and that in all other cases, this construction witnesses that $M_{F,H}(\varepsilon)$ is at least of the form $2^{\poly(1/\varepsilon)}$). To state this conjecture precisely, we now define the construction. 
\begin{construction}
\label{con:HG}
Let $F$ be a graph and let $\Delta$ be its maximum degree. 
For each vertex $v\in V(F)$, fix an ordering of the edges incident to $v$ and label them $\{(v,1),\dots,(v,\deg(v))\}$.\footnote{Thus each edge receives two labels (one from each endpoint), and knowing one of them is enough to identify the edge.} 
When considering a copy of $F$ in a graph $G$, we will (implicitly) fix an isomorphism from $F$ to the copy, so that the copy is endowed with the above edge-labelling. 

Let $G$ be an $n$-vertex graph where every edge is contained in a unique copy of $F$.
Let $F_1,\dots,F_m$ be the copies of $F$ in $G$.
The graph $G^{\star}$ is constructed as follows. The vertex set\footnote{We denote a vertex of $G^{\star}$ as $(v,\bx)$, where $v \in V(G)$ and $\bx \in \{1,\dots,\Delta\}^m$.} of $G^{\star}$ is $V(G) \times \{1,\dots,\Delta\}^m$.
For every edge $e=uv \in E(G)$, let $k \in [m]$ be the unique index with $e \in E(F_k)$, and suppose that $e$ has labels $(u,i)$ and $(v,j)$ (as an edge of $F_k$). Join all vertices $(u,\bx)$ and $(v,\by)$ such that $\bx_k = i$ and $\by_k = j$.   
These are the only edges in $G^{\star}$. 
\end{construction}
The following figure shows two examples of the construction $G^{\star}$ for $F=K_3$: first, when $G$ is also a triangle, and second, when $G$ is a ``bowtie'', consisting of two triangles sharing a vertex.
\begin{figure}[ht]
\begin{center}
	\begin{tikzpicture}
		\begin{scope}
		\foreach \x in {1,2,3} {
			\foreach \y in {0,1} \node[vert] (\x\y) at (120*\x+80+20*\y:1) {};
		}
		\draw (31) -- (10) (11) -- (20) (21) -- (30);
	\end{scope}
	\begin{scope}[xshift=6cm, scale=1.3]
		\coordinate (a) at (0,0);
		\coordinate (b) at (150:2);
		\coordinate (c) at (210:2);
		\coordinate (d) at (30:2);
		\coordinate (e) at (-30:2);
		\foreach \x in {a,b,c,d,e} {
			\foreach \y in {1,...,4} {
				\node[vert] (\x\y) at ($(\x) + (90*\y-45:.25)$) {};
			}
		}
		\foreach \m in {2,3} \foreach \n in {1,2} \draw[thin] (b\m) -- (c\n);
		\foreach \m in {1,4} \foreach \n in {1,2} \draw[thin] (b\m) -- (a\n);
		\foreach \m in {3,4} \foreach \n in {3,4} \draw[thin] (c\m) -- (a\n);
		\foreach \m in {1,4} \foreach \n in {2,3} \draw[thin] (a\m) -- (d\n);
		\foreach \m in {2,3} \foreach \n in {1,2} \draw[thin] (a\m) -- (e\n);
		\foreach \m in {1,4} \foreach \n in {3,4} \draw[thin] (d\m) -- (e\n);
	\end{scope}
	\end{tikzpicture}
\end{center}
\caption{Illustration of \cref{con:HG}}\label{fig:construction}
\end{figure}

Observe that each $F$-copy in $G$ becomes a disjoint union of complete bipartite graphs in $G^{\star}$. 
Namely, for each vertex $v$ in a copy $F_k$ of $F$, the blowup set 
$\{v\} \times \{1,\dots,\Delta\}^m$ is partitioned into $\Delta$ sets (according to the $k$th coordinate of the vector), and each of these sets participates in (at most) one complete bipartite graph corresponding to an edge of $F$ (for a total of $\deg(v)$ such complete bipartite graphs). Moreover, these partitions for different $F$-copies are orthogonal, meaning that their Venn diagram has all possible regions.  
The main difference between Construction \ref{con:HG} and the construction of Fox and Zhao \cite{FZ} is that in \cite{FZ}, each blowup set is partitioned into only two parts per $F$-copy, so the bipartite graphs arising from a single $F$-copy can overlap. We would like to avoid this situation in order to further restrict the types of subgraphs that can appear in the construction.

By adapting the proof in \cite{FZ}, we will show the following. 

\begin{lemma}\label{prop:no-approx-hom}
For every graph $F$, there exists $c>0$ such that the following holds.
Let $G$ be an $n$-vertex graph where every edge is contained in a unique copy of $F$, and let $m$ denote the number of copies of $F$ in $G$.
Let $G^{\star}$ be given by \cref{con:HG}. 
Then for $\varepsilon < \frac{cm}{n^2}$, there is no $\varepsilon$-approximate homomorphism from $G^{\star}$ to an $F$-hom-free graph on at most $2^{c m/n}$ vertices.
\end{lemma}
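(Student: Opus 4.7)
The plan is to adapt the argument of Fox--Zhao~\cite{FZ} by exploiting the \emph{orthogonal slab} structure of $G^{\star}$. Arguing by contradiction, suppose $\varphi\colon V(G^{\star}) \to V(\Gamma)$ is an $\varepsilon$-approximate homomorphism to some $F$-hom-free graph $\Gamma$ with $|\Gamma| = N \leq 2^{cm/n}$. For each $v \in V(G)$, each $k$ with $v \in V(F_k)$, and each $i \in \{1,\dots,d_{F_k}(v)\}$, I introduce the \emph{slab color set} $C_{v,k,i} \coloneqq \varphi(\{v\} \times \{\bx : \bx_k = i\}) \subseteq V(\Gamma)$. By construction, each edge $uv \in E(F_k)$ with labels $(u,i),(v,j)$ gives rise to a complete bipartite graph of size $\Delta^{m-1} \times \Delta^{m-1}$ in $G^{\star}$ between these slabs, so if $\varphi$ were a strict homomorphism then $C_{u,k,i} \times C_{v,k,j} \subseteq E(\Gamma)$ for every such edge.

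The key observation is the following. Fix $F_k$ and suppose that $\bigcap_{i=1}^{d_{F_k}(v)} C_{v,k,i}$ is non-empty for every $v \in V(F_k)$. Picking $\alpha_v$ from this intersection, the biclique conditions above immediately verify that $v \mapsto \alpha_v$ is a homomorphism $F_k \to \Gamma$, contradicting the $F$-hom-freeness of $\Gamma$. Hence each $F_k$ must have a \emph{discriminating} vertex $v(F_k) \in V(F_k)$ whose slab sets have empty common intersection. Since every color appearing at such $v$ is missing from at least one of the relevant slabs, letting $\mathbf{X}$ be uniform on $[\Delta]^m$ one obtains $H(\mathbf{X}_k \mid \varphi(v,\mathbf{X})) \leq \log_2(\Delta - 1)$; the chain rule applied across the coordinates $k$ with $v = v(F_k)$ then yields $H(\varphi(v,\mathbf{X})) \geq D_v \log_2 \tfrac{\Delta}{\Delta-1}$, where $D_v \coloneqq |\{k : v(F_k) = v\}|$. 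Combining with $H(\varphi(v,\mathbf{X})) \leq \log_2 N \leq cm/n$ gives $D_v = O_F(cm/n)$, so $\sum_v D_v = O_F(cm)$. But $\sum_v D_v \geq m$ by construction, which is a contradiction for $c$ sufficiently small depending only on $F$.

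The passage from strict to approximate homomorphisms is the main technical hurdle and is where I expect to spend most of the work. Each bad edge of $G^{\star}$ lies in the blowup of a \emph{unique} copy $F_k$ (by the uniqueness assumption on $G$), so $\sum_k b_k \leq \varepsilon |V(G^{\star})|^2 \leq cm\Delta^{2m}$, where $b_k$ is the number of bad edges in the blowup of $F_k$. By averaging, all but $O_F(cm)$ copies $F_k$ are \emph{good}, meaning that every biclique in their blowup contains only a tiny fraction of bad edges. For good $F_k$, the biclique constraint $C_{u,k,i} \times C_{v,k,j} \subseteq E(\Gamma)$ continues to hold once we restrict each $C_{v,k,i}$ to its ``heavy'' portion -- those colors appearing with frequency at least $1/(KN)$ in the slab -- since otherwise any heavy non-edge of $\Gamma$ would contribute $\Omega(\Delta^{2m-2}/N^2)$ bad edges and thereby exceed the allotted slack, for a suitable constant $K=K(F)$. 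Rerunning the intersection and entropy arguments on these heavy slab sets still produces a discriminating vertex for every good $F_k$, and the resulting $\sum_v D_v \geq (1 - O_F(c))\,m$ remains incompatible with the entropy estimate once $c = c(F)$ is small enough. The delicate point is to interleave the averaging, heavy-color reduction, and entropy computation so that the various constants line up correctly.
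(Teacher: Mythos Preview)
Your strict-homomorphism warm-up is correct and elegant: the discriminating-vertex idea, together with the bound $H(\mathbf{X}_k \mid \varphi(v,\mathbf{X})) \le \log(\Delta-1)$ and the chain rule, does yield the contradiction.

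The approximate case, however, has a real gap, and it is precisely in the step you flag as delicate. You declare $F_k$ \emph{good} when each of its bicliques has bad-edge fraction below some threshold $\eta$, and assert that all but $O_F(cm)$ copies are good. For the heavy-color biclique constraint (with heaviness threshold $1/(KN)$) to hold on a good copy, a single heavy non-edge must already exceed $\eta\,\Delta^{2m-2}$ bad edges; but a heavy non-edge only guarantees $\Delta^{2m-2}/(KN)^2$ bad edges, so you need $\eta < 1/(KN)^2$. Meanwhile, the averaging step gives at most $cm\Delta^{2}/\eta$ bad copies, and for this to be $O_F(cm)$ (or even $\le m/2$) you need $\eta = \Omega_F(1)$. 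Combining the two forces $N \le O_F(1/\sqrt{c})$, whereas the lemma must handle $N$ up to $2^{cm/n}$; in the intended application $m/n$ is unbounded, so this is not a constant to be absorbed. Raising the heaviness threshold to a constant independent of $N$ does not help either: then the heavy slab sets can simply be empty (whenever every colour in a slab has frequency below the threshold), and an empty intersection gives no entropy drop at all for $H(\mathbf{X}_k \mid \varphi(v,\mathbf{X}))$.

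The paper avoids this by running the entropy argument in the opposite direction, with no colour truncation. From $|\Gamma| \le 2^{cm/n}$ one gets $\sum_{k}\sum_{v\in V(F_k)} I(\mathbf{X}_k^v;\mathbf{Y}^v) < cm$ directly, and Markov yields $\ge m/2$ copies $F_k$ with $\sum_{v\in V(F_k)} I(\mathbf{X}_k^v;\mathbf{Y}^v) \le 2c$. For each such copy one argues \emph{pointwise over colours}: for typical realisations $y_v$ of $\mathbf{Y}^v$ the conditional entropy $H(\mathbf{X}_k^v \mid \mathbf{Y}^v = y_v)$ is within $O_F(c)$ of $\log\Delta$, so every label value has conditional probability bounded below by a constant depending only on $F$. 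Since $\Gamma$ is $F$-hom-free, any such tuple $(y_v)_v$ has a non-edge $y_uy_w$ along some edge of $F_k$, and the corresponding biclique then carries a $\Omega_F(1)$ fraction of bad edges \emph{within the fibres} $\varphi^{-1}(y_u),\varphi^{-1}(y_w)$; averaging over $(y_v)_v$ gives a bad-edge density bounded below by a constant with no $N$ in the denominator. That is the missing idea: bound bad edges \emph{conditionally on the colour} rather than via a global heavy-colour cut-off.
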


A graph is an {\em $F$-forest} if it can be obtained from an empty graph by repeatedly adding a copy of $F$ with at least $|V(F)|-1$ new vertices (hence the new copy shares at most one vertex with the current graph). The following figure shows an example of a $K_3$-forest.
\begin{figure}[h]
\begin{center}
	\begin{tikzpicture}
		\draw (0,0) node[vert] {} -- (150:1) node[vert] {} -- (210:1) node[vert] {} -- (0,0) -- (30:1) node[vert] {} -- (-30:1) node[vert] {} -- (0,0) (30:1) -- ++(-30:1) node[vert] {} -- ++(0,1) node[vert] {} -- (30:1) -- ++(0,1) node[vert] {} -- ++(210:1) node[vert] {} -- (30:1);
	\end{tikzpicture}
    \caption{An example of a $K_3$-forest}
\end{center} 
\end{figure}
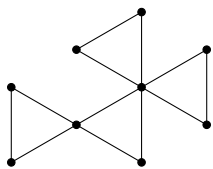

Note that if $F$ is $2$-connected then every $F$-forest $T$ satisfies the requirements of \cref{con:HG}, namely, every edge of $T$ is contained in a unique copy of $F$. Recall that $T^{\star}$ denotes the graph given by that construction.
Using \cref{prop:no-approx-hom}, we will prove the following.
\begin{theorem}\label{thm:approx hom hardness}
	Let $F,H$ be graphs, where $F$ is $2$-connected, and suppose that $H$ is not homomorphic to $T^{\star}$ for any $F$-forest $T$ with $T\rightarrow F$. 
	Then for every small enough $\varepsilon > 0$,
	$M_{F,H}(\varepsilon) \geq 2^{(1/\varepsilon)^c}$, where $c > 0$ depends only on $F,H$. 
\end{theorem}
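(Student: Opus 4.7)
The plan is to apply \cref{prop:no-approx-hom} to an appropriately chosen graph $G = G(\varepsilon)$. If we can produce such a $G$ on $n$ vertices with $m$ copies of $F$ (each edge in a unique copy) such that $G^{\star}$ is $H$-hom-free, then \cref{prop:no-approx-hom} yields $M_{F,H}(\varepsilon) > 2^{cm/n}$ whenever $\varepsilon < cm/n^2$. The target is to arrange $m/n \ge (1/\varepsilon)^{c'}$ simultaneously with $\varepsilon \le cm/n^2$; by taking $m = \Theta(n^{1+\alpha})$ for some small $\alpha > 0$ and $n$ a suitable polynomial in $1/\varepsilon$, we obtain a lower bound of the form $2^{(1/\varepsilon)^{c'}}$ with $c' = \alpha/(1-\alpha)$.

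To ensure $G^{\star}$ is $H$-hom-free, I would use a projection argument. Given any homomorphism $\varphi \colon H \to G^{\star}$, write $\varphi(u) = (v_u, \bx_u) \in V(G) \times \{1, \dots, \Delta\}^m$. Since every edge of $G^{\star}$ projects to an edge of $G$, the map $u \mapsto v_u$ is a homomorphism $H \to G$, whose image meets at most $e(H)$ copies of $F$ in $G$. Suppose $G$ has the \emph{local $F$-forest property}: any $r \le e(H)$ copies of $F$ in $G$ span a sub-$F$-forest. Then the $F$-copies touched by the image form a sub-$F$-forest $T \subseteq G$, which satisfies $T \to F$ by induction on the number of $F$-copies---using that $2$-connected $F$ admits, for any $u, v \in V(F)$, a homomorphism $F \to F$ sending $u \mapsto v$, so that the homomorphism can be extended whenever a new $F$-copy is glued at a single vertex. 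Finally, $\varphi$ factors through a natural embedded copy of $T^{\star}$ inside $G^{\star}$, obtained by restricting each $\bx_u$ to the coordinates indexing $F$-copies inside $T$; this produces a homomorphism $H \to T^{\star}$, contradicting the theorem's hypothesis.

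It remains to construct $G$ with the local $F$-forest property and $m = \Omega(n^{1+\alpha})$ copies of $F$. I would do this probabilistically, analogously to high-girth linear hypergraph constructions. Take a random $|V(F)|$-uniform hypergraph on $n$ vertices with edge probability tuned so that the expected number of hyperedges is $\Theta(n^{1+\alpha})$ for a small $\alpha > 0$ depending on $e(H)$, and realize each hyperedge as a copy of $F$ on its vertex set. Apply standard deletion arguments to enforce linearity (so every edge of $G$ lies in a unique $F$-copy) and to destroy short cycles in the intersection structure of $F$-copies (so any $\le e(H)$ such copies span a tree-like, hence $F$-forest, configuration). After deletions $G$ retains $m = \Omega(n^{1+\alpha})$ copies of $F$; choosing $n = \Theta(\varepsilon^{-1/(1-\alpha)})$ balances $\varepsilon < cm/n^2$ against maximizing $m/n$, and \cref{prop:no-approx-hom} delivers $M_{F,H}(\varepsilon) \ge 2^{(1/\varepsilon)^c}$ for some $c = c(F, H) > 0$.

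The main obstacle is the probabilistic construction of $G$: one must carefully balance the density of $F$-copies against the local $F$-forest property, with precise control over the hyperedge probability and the number of short-cycle configurations removed so that enough $F$-copies survive. A secondary technical point is verifying the vertex-flexibility property of general $2$-connected $F$ (that any $u \mapsto v$ extends to an endomorphism of $F$); this is immediate for vertex-transitive $F$, and can be checked directly for small non-transitive examples such as $K_4^-$ or the book graph, but a general proof may require additional structural arguments exploiting $2$-connectedness.
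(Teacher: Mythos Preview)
Your overall architecture matches the paper's proof closely: build a high-girth $F$-partite-like structure $G$ with $m=\Omega(n^{1+\alpha})$ copies of $F$, apply \cref{prop:no-approx-hom}, and rule out $H\to G^\star$ by projecting a putative homomorphism down to an $F$-forest $T$ and invoking the hypothesis. The gap is in the step where you conclude $T\to F$.

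Your argument for $T\to F$ relies on the claim that a $2$-connected graph $F$ admits, for every pair $u,v\in V(F)$, an endomorphism sending $u$ to $v$. This is false. Take $F=W_5$, the wheel on six vertices (a $C_5$ plus a universal centre). It is $2$-connected, but there is no homomorphism $W_5\to W_5$ sending the centre to a rim vertex: the five rim vertices form a $C_5$ in the centre's neighbourhood, and would have to map into the neighbourhood of a rim vertex, which induces only a path $P_3$; but $C_5$ is not bipartite. More generally, as noted just after the statement of the theorem in the paper, there exist $2$-connected $F$ and $F$-forests $T$ with $T\not\to F$ (indeed, if $F$ is a core then every $F$-forest maps to $F$ if and only if $F$ is vertex-transitive). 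So your inductive argument cannot be repaired by ``additional structural arguments exploiting $2$-connectedness''; the claim is simply wrong.

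The paper sidesteps this entirely by building $G$ so that $G\to F$ holds \emph{by construction}: the random $f$-uniform hypergraph $\mathcal G$ is taken to be $f$-partite, with the parts identified with $V(F)$, and each hyperedge is realised as a copy of $F$ respecting this partition. Thus $G$ is a subgraph of a blowup of $F$, and any $F$-forest $T\subseteq G$ automatically satisfies $T\to F$. Once you make this change, the rest of your outline (girth condition forcing the touched $F$-copies to form a hyperforest, projection of coordinates giving $H\to T^\star$) goes through essentially as in the paper. The $2$-connectedness of $F$ is used elsewhere: it guarantees that the only $F$-copies in $G$ are those coming from hyperedges of $\mathcal G$, so that every edge of $G$ lies in a unique $F$-copy.
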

We remark that if $F$ is vertex-transitive, then all $F$-forests are homomorphic to $F$. On the other hand, there are examples of non-vertex-transitive\footnote{Indeed, if there exist $u,v \in V(F)$ such that there exist no homomorphisms $\phi,\psi:F \to F$ with $\phi(u) = \psi(v)$, then one can construct such an $F$-forest by taking two copies of $F$ and gluing the vertex $u$ in one copy to the vertex $v$ in the other copy. This implies that if $F$ is a core (see \cite{MR1192374}), then every $F$-forest is homomorphic to $F$ if and only if $F$ is vertex-transitive.} $F$ and $F$-forests $T$ with $T \not \to F$.

As a corollary of \cref{thm:approx hom hardness} we obtain the following, implying the new lower bound in \cref{thm:odd cycles}.

\begin{corollary}\label{cor:C5 C7}
	$M_{K_3,C_\ell}(\e) \geq 2^{(1/\varepsilon)^c}$ for $\ell \in \{5,7\}$.
\end{corollary}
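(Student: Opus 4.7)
The plan is to apply \cref{thm:approx hom hardness} with $F=K_3$ (which is $2$-connected) and $H=C_\ell$ for $\ell \in \{5,7\}$. Every $K_3$-forest $T$ is $3$-colourable (by induction on the number of added triangles, each of which shares at most one vertex with the previous graph), so the condition $T \to K_3$ is automatic. It thus suffices to show that $C_\ell \not\to T^\star$ for every $K_3$-forest $T$, and I will in fact prove the stronger statement that $T^\star$ is bipartite, from which the non-existence of a homomorphism $C_\ell \to T^\star$ follows immediately since bipartite graphs admit no homomorphic image of any odd cycle.

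To establish bipartiteness, consider any cycle $C$ in $T^\star$ and project it to $T$ via the map $(v,\mathbf{x}) \mapsto v$ to obtain a closed walk $W$ of length $|C|$ in $T$. The key local observation is that if two consecutive edges of $W$ both lie in the same triangle $F_k$ of $T$ and share a vertex $v$, then these two edges must in fact be the same $T$-edge: otherwise the two distinct edges of $F_k$ at $v$ would assign different labels at $v$, forcing the $k$th coordinate of the lift of $v$ in $T^\star$ to simultaneously equal two distinct values in $\{1,2\}$, a contradiction. Consequently, at each position of $W$ either the walk backtracks along a single $T$-edge, or the two consecutive edges lie in distinct $F$-copies meeting at a cut vertex of $T$. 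Iteratively deleting backtracking pairs (which decreases the length by $2$ each time and hence preserves parity) yields a reduced closed walk $W'$ whose consecutive edges always belong to distinct $F$-copies; in particular, every vertex of $W'$ is a cut vertex of $T$ and every edge of $W'$ lies between two cut vertices inside some triangle of $T$.

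The final step is to pass to the block-cut tree $\mathcal{I}$ of $T$, whose vertex set consists of the triangles of $T$ together with the cut vertices of $T$. Since every block of a $K_3$-forest is a triangle, $\mathcal{I}$ is genuinely a tree (or a forest if $T$ is disconnected). Each edge $u_{i-1}u_i$ of $W'$, lying in a triangle $F_{c_i}$, lifts to a length-$2$ detour $u_{i-1} \to F_{c_i} \to u_i$ in $\mathcal{I}$, and concatenating these detours produces a closed walk of length $2|W'|$ in $\mathcal{I}$. This lifted walk is non-backtracking: at each cut-vertex node the two adjacent triangles $F_{c_i}, F_{c_{i+1}}$ are distinct by the alternation property, while at each triangle node the two adjacent cut vertices are the two distinct endpoints of a single edge of $W'$. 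Since non-backtracking closed walks in a forest have length $0$, we conclude $|W'| = 0$, forcing $|C| = |W|$ to be even. The main obstacle in the argument is setting up the reduction from $W$ to $W'$ cleanly and identifying the right target $\mathcal{I}$ for the final lift; once this is done the conclusion follows almost immediately from the standard fact that non-backtracking closed walks in trees are trivial.
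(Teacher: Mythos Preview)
Your overall plan---to apply \cref{thm:approx hom hardness} and show that $C_5,C_7\not\to T^\star$ for every $K_3$-forest $T$---is exactly the paper's route, and your ``key local observation'' (that two consecutive edges of the projected walk $W$ lying in the same triangle must coincide) is correct and useful. However, the stronger claim that $T^\star$ is bipartite is \emph{false}, and the paper itself points this out: it notes that $K_3^{\bullet\bullet}=C_9$ is homomorphic to $T^\star$ for a suitable $K_3$-forest $T$ (take a central triangle with one extra triangle glued at each vertex). So your argument must break down somewhere, and it does at the backtrack-removal step.

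Concretely, consider the $K_3$-forest $T$ with central triangle $F_1=abc$ and triangles $F_2,F_3,F_4$ attached at $a,b,c$ respectively. There is a $9$-cycle $C$ in $T^\star$ whose projection to $T$ is the closed walk
\[
W=(a_1,\;a,\;b,\;b_1,\;b,\;c,\;c_1,\;c,\;a,\;a_1),
\]
which satisfies your key observation at every vertex. But when you delete the backtrack $b\,b_1\,b$, the newly adjacent edges $ab$ and $bc$ at $b$ both lie in $F_1$ yet are \emph{not} equal. Your observation only applies to consecutive edges of the \emph{original} $W$, because it uses that the two edges share a single lifted vertex $(v,\mathbf y)$ in $T^\star$; after removing a backtrack in $W$, the two newly adjacent edges of $W'$ correspond to edges of $C$ incident to \emph{different} vertices of $T^\star$ over $v$ (here $b^1$ and $b^2$), so no coordinate constraint forces them to coincide. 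Iterating the reductions in this example collapses $W$ all the way down to the triangle $a,b,c$, with all three edges in $F_1$---so your conclusion that consecutive edges of $W'$ lie in distinct $F$-copies, and hence the block--cut--tree lift, both fail.

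The paper instead proves only what is needed: that $T^\star$ has no odd cycle of length at most $7$. It fixes an odd cycle $C$ in $T^\star$, finds three vertices $\mathbf u,\mathbf v,\mathbf w$ of $C$ sitting over a triangle $u,v,w$ of $T$, and then argues directly about pairwise distances in $T^\star$ among $\mathbf u,\mathbf v,\mathbf w$: no two of them are joined by a path of length $2$, and if two are adjacent then the third is at distance at least $3$ from both and at least $4$ from one of them. This forces $\abs{C}\geq 8$. Your local observation could be recycled as part of such a distance analysis, but the global bipartiteness shortcut is not available.
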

We remark that if $H=C_\ell$ for $\ell \geq 9$, then in fact $H \to T^\star$ for some $K_3$-forest $T$. More generally, $F^{\bullet\bullet}$ is always homomorphic to $T^\star$ for some $F$-forest $T$\footnote{Indeed, consider an $F$-forest $T$ consisting of one ``central" copy $F_0$ of $F$, and for each vertex $v$ of $F_0$, an $F$-copy $F_v$ which intersects $F_0$ at $v$. Note that $F^\star_0$ contains the graph obtained from $F_0$ by replacing each $v \in V(F_0)$ with a set $A_v$ of $\deg_F(v)$ different vertices and placing a perfect matching on $\bigcup_{v \in V(F)}A_v$ such that for every $uv \in E(F_0)$, there is an edge of the matching between $A_u$ and $A_v$. Now, for each $v \in V(F_0)$, there is a vertex $w_v \in V(T^\star)$ which is adjacent to all vertices of $A_v$ (this vertex comes from the $F$-copy $F_v$). This gives us a copy of $F^{\bullet\bullet}$ where $(w_v)_{v \in V(F)}$ play the roles of the vertices of $F$ and $\bigcup_{v \in V(F)}A_v$ play the roles of the subdivision vertices.}, and thus \cref{thm:approx hom hardness} is consistent with \cref{thm:subdivision blowup polynomial bound}. 
In fact, we conjecture that the condition in \cref{thm:approx hom hardness} characterises the cases where $M_{F,H}(\varepsilon)$ is at least $2^{\poly(1/\varepsilon)}$, and in all other cases $M_{F,H}(\varepsilon)$ \nolinebreak is \nolinebreak polynomial.
\begin{conjecture}\label{conj:approx homo characterization}
	Let $F,H$ be graphs, and suppose that there exists an $F$-forest $T$ such that $T\rightarrow F$ and $H\rightarrow T^{\star}$. Then $M_{F,H}(\varepsilon) = \poly(1/\varepsilon)$. 
\end{conjecture}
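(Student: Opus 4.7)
Plan: The conjecture strictly generalizes \cref{thm:subdivision blowup polynomial bound}, since for the ``bouquet'' $F$-forest $T$ described in the footnote after \cref{cor:C5 C7}, one has $F^{\bullet\bullet} \to T^{\star}$, so any $H$ with $H \to F^{\bullet\bullet}$ automatically satisfies $H \to T^{\star}$. My plan is to mimic and extend the proof strategy of \cref{thm:subdivision blowup polynomial bound}, producing a polynomial-sized target graph $\Gamma$ from a domination-based compression of $G$.

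First, I would reduce to a bounded-domination setting. Using \cref{lem:pull out vtxs}, delete at most $\tfrac{\varepsilon}{2}\abs{G}^2$ edges from $G$ to obtain a subgraph $G'$ with a dominating set $D \subseteq V(G')$ of size $O(1/\varepsilon)$; crucially, $G'$ remains $H$-hom-free. For each $v \in V(G')$, fix a dominator $d(v) \in D \cap N(v)$ and define a \emph{type} $\tau(v)$ that records $d(v)$ together with a bounded amount of further information about $v$'s short-distance neighborhood, chosen to be tailored to the structure of $T$. Build $\Gamma$ on the image of $\tau$, placing an edge $\tau(u)\tau(v)$ whenever some edge $uv$ of $G'$ realizes that pair of types. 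Then $\tau$ is by construction an $\varepsilon$-approximate homomorphism from $G$ to $\Gamma$, and with a careful choice of type one should be able to guarantee $\abs{\Gamma}=\poly(1/\varepsilon)$.

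The key step is to show that $\Gamma$ is $F$-hom-free. Suppose $\Gamma$ contains a homomorphic copy of $F$ on types $\tau_1,\dots,\tau_{\abs{V(F)}}$. Then for each edge $\{i,j\}$ of $F$ there are many edges of $G'$ with endpoint types $\tau_i,\tau_j$, and by the definition of $\tau$ each such edge carries a short path through $D$ connecting it to designated dominators. The goal is to amalgamate these paths, together with additional density obtained by supersaturation, into a genuine copy of $T^{\star}$ in $G'$, which by $H \to T^{\star}$ yields an $H$-copy, contradicting the $H$-hom-freeness of $G$. I would attempt this amalgamation by induction on the number $m$ of $F$-copies in $T$: starting from an initial $F$-copy, repeatedly glue in the next $F$-copy of $T$ (which by the $F$-forest property shares at most one vertex with the already-embedded piece), using the assumption $T \to F$ to keep each new copy compatible with the existing blow-up, and using the density of candidate edges in $G'$ to choose vertex-disjoint witnesses for the new coordinate axis of \cref{con:HG}.

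The main obstacle will be simultaneously meeting two competing constraints in this key step: keeping $\abs{\Gamma}$ polynomial in $1/\varepsilon$, while still encoding enough of the $T$-structure in $\tau$ to drive the inductive embedding. A naive type such as the full adjacency pattern to $D$ has size exponential in $\abs{D}$, whereas too coarse a type may fail to carry the information needed to maintain the orthogonality of the coordinate partitions built into \cref{con:HG}. I expect the correct $\tau$ to be defined in terms of a small combinatorial parameter depending only on $F$ and $T$ (and not on $G$), and identifying this parameter and proving that it supports the inductive embedding is, in my view, the technical heart of the conjecture.
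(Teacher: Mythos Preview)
This statement is \emph{Conjecture}~1.12 in the paper; the authors do not prove it, and they explicitly leave it open. So there is no ``paper's own proof'' to compare against, and your proposal should be read as a research plan rather than a proof.

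As a plan, it has a genuine gap that you yourself flag at the end: you never specify the type function $\tau$, and you acknowledge that finding the right $\tau$ is ``the technical heart of the conjecture''. Without it, the argument does not go through. There is also a more concrete error in the key step. You write that ``for each edge $\{i,j\}$ of $F$ there are many edges of $G'$ with endpoint types $\tau_i,\tau_j$'', and then invoke supersaturation. But your construction of $\Gamma$ puts an edge $\tau_i\tau_j$ whenever \emph{some} edge of $G'$ realises that pair of types; a homomorphic copy of $F$ in $\Gamma$ therefore guarantees only a single witness edge per $F$-edge, not many, and there is no density to supersaturate. This is exactly why the proof of \cref{thm:subdivision blowup polynomial bound} succeeds only for $H \to F^{\bullet\bullet}$: a single witness edge per $F$-edge, together with the dominating vertices, already assembles into $F^{\bullet\bullet}$. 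For a general $T^{\star}$ one needs the orthogonal $\Delta$-partitions of \cref{con:HG} across the $m$ different $F$-copies of $T$, and a single edge per type-pair carries no such structure. Your inductive gluing sketch does not address how to manufacture these orthogonal partitions inside $G'$, and this is precisely the obstacle that makes the conjecture nontrivial.
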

One upshot of \cref{conj:approx homo characterization} would be that \cref{con:HG} is ``universal'' for lower-bounding $M_{F,H}(\varepsilon)$, that is, that whenever this construction fails to witness an exponential lower bound, then in fact no such lower bound exists.
In particular, \cref{conj:approx homo characterization} would imply that $M_{F,H}(\varepsilon)$ cannot have any intermediate growth rate between polynomial and exponential.

\subsection{Organisation}
The rest of this paper is organised as follows. We prove our results on asymmetric homomorphism thresholds, \cref{thm:C3C5C7,thm:bounded domination}, in \cref{sec:thresholds}. Our upper bounds on $M_{F,H}(\varepsilon)$, \cref{prop:exponential approx hom,thm:subdivision blowup polynomial bound}, are proved in \cref{sec:approx hom upper}. Finally, our lower bound on $M_{F,H}(\varepsilon)$ is proved over two sections: \cref{sec:key lemma proof} contains the proof of the key \cref{prop:no-approx-hom}, and the deduction of \cref{thm:approx hom hardness} is given in \cref{sec:approx hom hard}. We end with some concluding remarks and open problems in \cref{sec:conclusion}.

All logarithms in this paper are to base $2$. We systematically omit floor and ceiling signs whenever they are not crucial.

\section{Asymmetric homomorphism thresholds}\label{sec:thresholds}
In this section, we prove \cref{thm:C3C5C7,thm:bounded domination}. 
Both are proved using the generalised Mycielskian construction, whose definition we now recall.

\begin{construction}\label{def:t-fold mycielskian}
	Given an integer $t \geq 1$ and a graph $\Gamma$, its \emph{$t$-fold Mycielskian} is the graph $M_t(\Gamma)$ defined as follows:
\begin{enumerate}
    \item $V(M_t(\Gamma)) = (V(\Gamma) \times \{1,\dots,t+1\}) \cup \set{r}$. That is, the vertices of $M_t(\Gamma)$ are of the form $(v,i)$ for some $v \in V(\Gamma)$ and $1 \leq i \leq t+1$, as well as a single distinguished vertex $r$.
    \item The induced subgraph on $V(\Gamma) \times \{t+1\}$ is a copy of $\Gamma$, and $V(\Gamma) \times \{i\}$ is an independent set for all $1 \leq i \leq t$.
    \item $r$ is adjacent to all vertices in $V(\Gamma) \times \{1\}$ and to no other vertices of $M_t(\Gamma)$.
    \item For all $1 \leq i \leq t$, each vertex $(v,i)$ is adjacent to all vertices $(w,i+1)$ with $vw \in E(\Gamma)$, and to no other vertices in $V(\Gamma) \times \{i+1\}$.
\end{enumerate}

Equivalently, one can obtain $M_t(\Gamma)$ from the tensor product of $\Gamma$ with a path on $t$ edges by inserting a copy of $\Gamma$ inside the last fibre, and adding a new vertex $r$ joined to all vertices of the first fibre.
\end{construction}
\begin{figure}[h]
    \centering
    \begin{tikzpicture}
        \node[vert] (r) at (0,0) {};
        \foreach \x in {1,...,7} {
        \foreach \y in {1,2,3} \node[vert] (\x\y) at (\x-4,-\y) {};
        \draw (r) -- (\x1);
        }
        \foreach \x in {1,...,6} {
        \pgfmathtruncatemacro{\y}{\x+1}
        \draw (\x1) -- (\y2)
        (\x2) -- (\y1)
        (\x2) -- (\y3)
        (\x3) -- (\y2)
        (\x3) -- (\y3);
        }
        \draw (11) -- (72) -- (13);
        \draw (71) -- (12) -- (73);
        \draw (13) to[out=-10, in=190] (73);
    \end{tikzpicture}
    \caption{The $2$-fold Mycielskian  $M_2(C_7)$. Note that this graph is $\C_5$-free, as implied by \cref{lem:t-fold odd girth}.}
\end{figure}
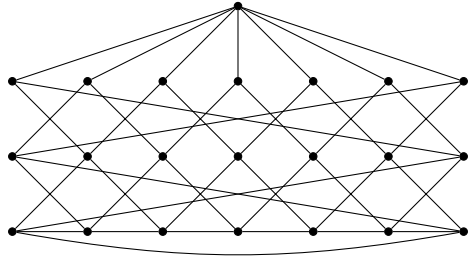

We remark that generalised Mycielskians have been well-studied in the literature, going back at least to work of Stiebitz \cite{Stiebitz}, and generalising the classical work of Mycielski \cite{MR69494}.
The key property we need about $t$-fold Mycielskians is that they preserve odd girth, as stated in the following lemma, whose proof we provide for the sake of completeness. We recall that $\C_{2t+1}$ denotes the family $\{C_3,C_5,\dots,C_{2t+1}\}$. Thus, being $\C_{2t+1}$-free is the same as having odd girth at least $2t+3$.
\begin{lemma}\label{lem:t-fold odd girth}
    If $\Gamma$ is $\C_{2t+1}$-free, then $M_t(\Gamma)$ is $\C_{2t+1}$-free as well.
\end{lemma}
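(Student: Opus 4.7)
The plan is to suppose for contradiction that $M_t(\Gamma)$ has an odd cycle $C$ of length $\ell \leq 2t+1$, and to split into two cases according to whether the apex $r$ lies on $C$.

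If $r \notin C$, the projection $\phi \colon V(M_t(\Gamma)) \setminus \{r\} \to V(\Gamma)$ given by $(v,i) \mapsto v$ is a graph homomorphism: inter-layer edges come from $E(\Gamma)$ by construction, and the internal edges inside $V(\Gamma)\times\{t+1\}$ come from the embedded copy of $\Gamma$. Applying $\phi$ to $C$ produces a closed walk of odd length $\ell$ in $\Gamma$, which must contain an odd cycle of length at most $\ell \leq 2t+1$, contradicting $\C_{2t+1}$-freeness.

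The bulk of the work lies in the case $r \in C$. The key idea is to track the layer sequence of the path $x_0, x_1, \ldots, x_{\ell-2}$ obtained by deleting $r$ from $C$. This path has odd length $\ell-2$, and its endpoints are the two neighbours of $r$ on $C$, so both lie in $V(\Gamma)\times\{1\}$. Consecutive layers $L_j$ either differ by exactly one (on a vertical edge) or both equal $t+1$ (on a top-layer edge). Letting $p, n, z$ count the up-, down-, and horizontal steps, the identities $p+n+z=\ell-2$ and $p-n=L_{\ell-2}-L_0=0$ give $\ell - 2 = 2p + z$. If $z \geq 1$, the walk must reach layer $t+1$ from layer $1$ and return, forcing $p \geq t$ and $n \geq t$ and hence $\ell - 2 \geq 2t + 1$, contradicting $\ell \leq 2t+1$. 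If instead $z = 0$, then $\ell - 2 = 2p$ is even, contradicting the fact that $\ell - 2$ is odd.

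The main obstacle is this second case, which is where the parameter $t$ from the construction is genuinely exploited. Intuitively, $r$ is far enough from the only layer carrying internal edges that no short odd cycle through $r$ can afford the $2t$-step vertical journey needed to use those internal edges and ``absorb'' the parity defect that the bipartite layered structure would otherwise enforce. The case $r \notin C$, by contrast, is an elementary homomorphism argument that requires no knowledge of where $r$ sits in the construction.
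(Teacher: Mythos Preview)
Your proof is correct and follows essentially the same approach as the paper. Both arguments first dispose of the case $r\notin C$ via the projection homomorphism $(v,i)\mapsto v$, and then in the case $r\in C$ exploit two facts: that away from the top layer the graph is bipartite (your $z=0$ parity contradiction), and that reaching the top layer from $r$ costs at least $t$ steps each way (your $p,n\ge t$ bound). The paper phrases this slightly more conceptually---reducing to closed walks of length exactly $2t+1$ and invoking bipartiteness and distance directly---while you carry out an explicit up/down/horizontal step count, but the underlying ideas are identical.
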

Note that in case $t=1$, this just says that the standard Mycielskian construction preserves triangle-freeness, which is the basic result used by Mycielski \cite{MR69494}. 
\begin{proof}[Proof of \cref{lem:t-fold odd girth}]
    We use the following simple facts: Every closed walk of length $2t+1$ contains an odd cycle of length at most $2t+1$, and any odd cycle of length at most $2t+1$ gives rise to a closed walk of length $2t+1$ (by traversing the cycle and then going back and forth along an edge).
    Therefore, it is sufficient to prove that $M_t(\Gamma)$ contains no closed walk of length $2t+1$. Note that $M_t(\Gamma) \setminus \{r\}$ is homomorphic to $\Gamma$, via the homomorphism sending $(v,i) \mapsto v$. Therefore, as $\Gamma$ is $\C_{2t+1}$-free, any closed walk of length $2t+1$ in $M_t(\Gamma)$ must use the vertex $r$. Similarly, if we delete the layer $V(\Gamma) \times \{t+1\}$ from $M_t(\Gamma)$, we obtain a bipartite graph, with the layer $V(\Gamma) \times \{t\}$ on one side of the bipartition. As a consequence, any subgraph containing at most one vertex from $V(\Gamma) \times \{t+1\}$ is also bipartite (because in such a subgraph, the unique vertex from $V(\Gamma) \times \{t+1\}$ only has neighbours in $V(\Gamma) \times \{t\}$). Hence, any walk of length $2t+1$ must use at least two vertices of $V(\Gamma) \times \{t+1\}$. But any walk from $V(\Gamma) \times \{t+1\}$ to $r$ must use at least $t$ intermediate vertices, hence any closed odd walk in $M_t(\Gamma)$ must have length at least $2t+3$.
\end{proof}

The next lemma is the key result underpinning \cref{thm:C3C5C7,thm:bounded domination}. It allows us to extend a homomorphism $G[U] \to \Gamma$ to a homomorphism $G \to M_t(\Gamma)$, assuming that the structure of $V(G) \setminus U$ is sufficiently simple. 
By iterating this statement, we are able to prove that $G$ is homomorphic to a bounded-size graph (obtained by iterating the $t$-fold Mycielskian construction), so long as $G$ can be partitioned into ``well-behaved'' pieces. We are hopeful that this proof strategy, of using the generalized Mycielski construction to extend a homomorphism on one portion of the graph to the remainder, will have other applications besides \cref{thm:C3C5C7,thm:bounded domination}.

Given a set $X$ and an integer $i$, we denote by $N^i(X)$ the $i$th neighbourhood of $X$, namely the set of all vertices at distance $i$ from some vertex of $X$, and at distance at least $i$ from all vertices of $X$. Note that $N^0(X) = X$, $N^1(X)=N(X)$ is just the neighbourhood of $X$, and $N^i(X) \subseteq N(N^{i-1}(X))$ for all $i$.
\begin{lemma}\label{lem:generalised extend hom}
    Let $G$ be a graph, and let $U \sqcup I$ be a partition of its vertex set. Suppose that $I,N(I),N^2(I),\dots,N^t(I)$ are all independent sets in $G$. If $G[U]$ is homomorphic to some graph $\Gamma$, then $G$ is homomorphic to $M_t(\Gamma)$.
\end{lemma}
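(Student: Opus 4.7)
The plan is to construct the homomorphism $\psi : V(G) \to V(M_t(\Gamma))$ explicitly, using the BFS layering of $G$ from the set $I$. Concretely, I would map $I$ to the apex vertex $r$, map each layer $N^i(I)$ with $1 \leq i \leq t$ to the corresponding Mycielskian layer $V(\Gamma) \times \{i\}$ (using $\phi$ to pick the first coordinate), and map everything else, namely $v \in U$ with $d(v,I) > t$, into the top layer $V(\Gamma) \times \{t+1\}$ via $\phi$. Formally, set
\[
    \psi(v) = \begin{cases} r & \text{if } v \in I, \\ (\phi(v), i) & \text{if } v \in N^i(I),\ 1 \leq i \leq t, \\ (\phi(v), t+1) & \text{otherwise}. \end{cases}
\]
Note that $\phi(v)$ is defined in the last two cases because $N^i(I) \subseteq U$ for $i \geq 1$.

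The next step is to verify that $\psi$ preserves edges by a case analysis on an edge $uv \in E(G)$, according to the levels $\ell(u), \ell(v)$ (setting $\ell(w) = d(w,I)$, or $\ell(w) = \infty$ if $d(w,I) > t$). Since $|\ell(u) - \ell(v)| \leq 1$ whenever $uv$ is an edge, there are only three kinds of cases to check: equal levels, consecutive finite levels, and the transition from level $t$ to level $\infty$ (or two infinite levels). The hypothesis that $I, N(I), \ldots, N^t(I)$ are independent rules out all ``equal level'' cases with $\ell(u) = \ell(v) \leq t$, which matches the fact that $r$ and each layer $V(\Gamma) \times \{i\}$ with $i \leq t$ are independent in $M_t(\Gamma)$. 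For consecutive levels $0$ and $1$, the image edge is between $r$ and $V(\Gamma)\times\{1\}$, which always exists by definition. For consecutive levels $i$ and $i+1$ with $i \geq 1$ (including the transition into level $t+1$), and for two vertices at level $\infty$, both endpoints lie in $U$, so $\phi(u)\phi(v) \in E(\Gamma)$ because $\phi$ is a homomorphism $G[U] \to \Gamma$; this is precisely what is needed for $(\phi(u), i)(\phi(v), i+1)$ or $(\phi(u), t+1)(\phi(v), t+1)$ to be an edge in $M_t(\Gamma)$.

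I do not expect any real obstacle, since the Mycielskian is essentially engineered to make this kind of extension go through; the only content of the argument is matching the BFS layering of $G$ from $I$ with the layered structure of $M_t(\Gamma)$. The most delicate point is simply to define $\psi$ carefully enough that every vertex of $V(G) \setminus I$ receives a well-defined image in layers $1,\dots,t+1$ (not layer $0$), so that edges between distinct levels are sent to the correct cross-layer edges of $M_t(\Gamma)$, and so that edges inside the top layer are handled by $\phi$ and the embedded copy of $\Gamma$.
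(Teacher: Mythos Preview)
Your proposal is correct and essentially identical to the paper's proof: the paper defines $\psi$ by the same three-case rule (send $I$ to $r$, send $N^i(I)$ to layer $i$ via $\phi$ for $1\le i\le t$, and send the rest of $U$ to layer $t+1$ via $\phi$), and then verifies that $\psi$ is a homomorphism by the same case analysis on the levels of the endpoints of an edge, using independence of the layers to rule out equal levels at most $t$ and using that $\phi$ is a homomorphism on $G[U]$ for the remaining cases.
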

\begin{proof}
    Let $\phi:G[U] \to \Gamma$ be a homomorphism, which exists by assumption. We define $\psi:G \to M_t(\Gamma)$ as follows.
    \begin{enumerate}
        \item If $x \in I$, we set $\psi(x)=r \in V(M_t(\Gamma))$.
        \item If $x \in N^{i}(I)$ for some $1 \leq i \leq t$, then in particular $x \in U$, hence $\phi(x) = v$ for some $v \in V(\Gamma)$. We then set $\psi(x) = (v,i) \in V(M_t(\Gamma))$.
        \item Finally, if $x \in U \setminus (N^1(I) \cup \dots \cup N^t(I))$, we set $\psi(x)=(\phi(x),t+1) \in V(M_t(\Gamma))$.
    \end{enumerate}
    We claim that $\psi$ is a homomorphism. So fix any $xy \in E(G)$. As $I$ is an independent set, at most one of $x,y$ is in $I$. If $x \in I$, then the fact that $xy \in E(G)$ implies that $y \in N^1(I)$, hence $\psi(y) \in V(\Gamma) \times \{1\}$. By the definition of $M_t(\Gamma)$, we conclude that $\psi(x)=r$ is adjacent \nolinebreak to \nolinebreak $\psi(y)$.

    It thus remains to handle the case when both $x$ and $y$ are in $U$. If $x \in N^i(I)$ for some $1 \leq i \leq t$, then $y \in N^{i-1}(I) \cup N^{i+1}(I)$ (here we use the assumption that $N^i(I)$ is independent). We may assume without loss of generality that $y \in N^{i+1}(I)$ (in the other case, we interchange the roles of $x$ and $y$ and decrease $i$ by one). Let $v = \phi(x)$ and $w = \phi(y)$, which are adjacent in $\Gamma$ as $\phi$ is a homomorphism $G[U] \to \Gamma$. By the definition of $\psi$, we have $\psi(x) = (v,i)$ and $\psi(y) = (w,i+1)$, which are adjacent in $M_t(\Gamma)$ by definition. The final case is when both $x$ and $y$ are in $U \setminus (N^1(I) \cup \dots \cup N^t(I))$. In this case, again denoting $v = \phi(x), w = \phi(y)$ as above, we have that $\psi(x) = (v,t+1)$ and $\psi(y) = (w,t+1)$. But as $V(\Gamma) \times \{t+1\}$ induces a copy of $\Gamma$ in $M_t(\Gamma)$, we have that $\psi(x)$ and $\psi(y)$ are adjacent, since $v$ and $w$ are \nolinebreak adjacent \nolinebreak in \nolinebreak $\Gamma$.
\end{proof}

We are now ready to prove our results on asymmetric homomorphism thresholds, beginning with \cref{thm:C3C5C7}.

\begin{proof}[Proof of \cref{thm:C3C5C7}]
    Select a maximal collection $a_1,\dots,a_k$ of vertices in $G$ whose neighbourhoods $X_j \coloneqq N(a_j)$ are pairwise disjoint. By the minimum degree assumption, we have $\abs{X_j}\geq \delta \abs G$ for all $j$, hence $k \leq {\frac 1 \delta}$. We partition the vertices in $V(G) \setminus \bigcup_{j=1}^k X_j$ into sets $Y_1,\dots,Y_k$ as follows. For every vertex $v \in V(G) \setminus \bigcup_{j=1}^k X_j$, the maximality of the collection $a_1,\dots,a_k$ implies that $N(v) \cap X_j \neq \varnothing$ for some $j$. We then set $v \in Y_j$ for the minimal such index $j$. Note that $V(G) = X_1 \cup \dots \cup X_k \cup Y_1 \cup \dots \cup Y_k$. Next, we prove the following:

    \begin{claim}\label{claim:independent layers}
        For every $1 \leq j \leq k$ and $0 \leq i \leq t$, the sets $N^i(X_j),N^i(Y_j)$ are independent in $G$.
    \end{claim}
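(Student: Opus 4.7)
The plan is to prove both independence statements by contradiction: an edge inside $N^i(X_j)$ or $N^i(Y_j)$ would produce a short odd closed walk through the distinguished vertex $a_j$, which would then contain an odd cycle of length at most $2t+5$, contradicting the assumption that $G$ is $\mathscr{C}_{2t+5}$-free. Recall that $\mathscr{C}_{2t+5}=\{C_3,C_5,\dots,C_{2t+5}\}$, so the hypothesis is equivalent to saying that $G$ contains no closed odd walk of length at most $2t+5$, since any such walk contains an odd cycle of the same or smaller length.

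I would first handle $N^i(X_j)$. Suppose $uv\in E(G)$ with $u,v\in N^i(X_j)$. If $i=0$, then $u,v\in X_j=N(a_j)$, so $\{a_j,u,v\}$ spans a triangle. If $i\geq 1$, then by definition of $N^i$ there exist paths of length exactly $i$ from $u$ and from $v$ to vertices $x_u,x_v\in X_j$. Concatenating the edge $a_jx_u$, the reversed $u$-to-$x_u$ path, the edge $uv$, the $v$-to-$x_v$ path, and the edge $x_va_j$ yields a closed walk of length $2i+3\leq 2t+3$. Since $2i+3$ is odd, this walk contains a forbidden odd cycle.

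The argument for $N^i(Y_j)$ is essentially the same, but gains one extra edge at each end: by the definition of $Y_j$, every vertex $y\in Y_j$ has a neighbor in $X_j$, so each path ending in $Y_j$ can be extended by one further edge to reach $X_j$. Consequently, an edge $uv$ with $u,v\in N^i(Y_j)$ yields a closed walk through $a_j$ of odd length $2i+5\leq 2t+5$, again producing a forbidden odd cycle.

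I do not anticipate any real obstacle. The only small subtlety is that in degenerate cases — for instance when the two chosen $X_j$- or $Y_j$-endpoints coincide, or when the two $i$-paths share internal vertices — one merely obtains an even shorter closed odd walk, which still forces an odd cycle of length at most $2t+5$. The argument is purely combinatorial and relies only on the odd-girth assumption together with the fact that $X_j=N(a_j)$ is independent (which itself follows from triangle-freeness) and that each vertex of $Y_j$ has a neighbor in $X_j$ by construction.
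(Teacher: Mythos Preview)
Your argument is correct and follows essentially the same idea as the paper's proof: both deduce that an edge inside $N^i(X_j)$ or $N^i(Y_j)$ would yield an odd closed walk through $a_j$ of length at most $2t+5$, contradicting $\mathscr{C}_{2t+5}$-freeness. The only cosmetic difference is that the paper first packages the observation as ``the ball of radius $t+2$ around $a_j$ is bipartite (since each $N^i(a_j)$ is independent), and all of $N^i(X_j)$ (resp.\ $N^i(Y_j)$) lies in one side of the bipartition,'' whereas you construct the offending odd walk explicitly; the underlying arithmetic $2i+3\le 2t+3$ and $2i+5\le 2t+5$ is identical.
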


    \begin{proof}
    We first note that $N^i(a_j)$ is an independent set for all $1 \leq i \leq t+2$, as an edge between two vertices at distance $i$ from $a_j$ would yield a closed walk of length $2i+1 \leq 2(t+2)+1=2t+5$, a contradiction to the assumption that $G$ is $\C_{2t+5}$-free. 
    It follows that the subgraph of $G$ induced by $\bigcup_{i=0}^{t+2}N^i(a_j)$ is bipartite
    with parts $\bigcup_{i = 0 \text{ even}}^{t+2}N^i(a_j)$ and
    $\bigcup_{i = 0 \text{ odd}}^{t+2}N^i(a_j)$. Now fix any $0 \leq i \leq t$. Every vertex $v \in N^i(X_j)$ has a walk of length $i+1 < t+2$ to $a_j$, because $v$ has a path of length $i$ to $X_j = N(a_j)$. Hence, all vertices in $N^i(X_j)$ are in the same part of the bipartition, implying that $N^i(X_j)$ is independent. Similarly, every vertex $v \in N^i(Y_j)$ has a walk of length $i+2 \leq t+2$ to $a_j$, because $v$ has a path of length $i$ to $Y_j$, and every vertex in $Y_j$ has a walk of length 2 to $a_j$. Hence, again, all vertices in $N^i(Y_j)$ are in the same part of the bipartition and so $N^i(Y_j)$ is independent.   
    \end{proof}

	We now define an increasing sequence of sets $U_1 \subseteq \dots \subseteq U_{2k}$ by declaring 
	\[
		U_\ell = \bigcup_{j=1}^\ell X_j \qquad \text{ and } \qquad U_{k+\ell} = U_k \cup \bigcup_{j=1}^\ell Y_j
	\]
	for all $1 \leq \ell \leq k$. Let $G_j = G[U_j]$, and note that $G_{2k}=G$ since $U_{2k} = V(G)$. Moreover, if we set $I_{j+1} \coloneqq U_{j+1} \setminus U_{j} \in \{X_1,\dots,X_k,Y_1,\dots,Y_k\}$, then by Claim \ref{claim:independent layers}, $I_{j+1}$ and $N^i(I_{j+1})$ are independent in $G$ for all $1 \leq i \leq t$, hence also independent in $G_{j+1}$.
    
	Now let $\Gamma_1$ be the one-vertex graph, and let $\phi_1: G_1 \to \Gamma_1$ be the constant function, which is a homomorphism since $G_1$ has no edges. Inductively, having defined $\phi_j:G_j \to \Gamma_j$, we let $\Gamma_{j+1} = M_t({\Gamma_j})$, and let $\phi_{j+1}:G_{j+1} \to \Gamma_{j+1}$ be the homomorphism given by \cref{lem:generalised extend hom}, which we may apply since $V(G_{j+1}) = V(G_j) \sqcup I_{j+1}$ and since $I_{j+1}$ and $N^i(I_{j+1})$ are independent sets for all $1 \leq i \leq t$. Note that by \cref{lem:t-fold odd girth}, each of the graphs $\Gamma_j$ is $\C_{2t+1}$-free. Moreover, $\abs{\Gamma_{j}} = (t+1)\abs{\Gamma_{j-1}}+1$, hence $\abs{\Gamma_{j}} = \frac{(t+1)^{j}-1}{t}$ holds by induction. At the end of this process, we have defined a homomorphism $\phi_{2k}:G_{2k} \to \Gamma_{2k}$, where $G_{2k}=G$ and $\Gamma_{2k}$ is a $\C_{2t+1}$-free graph on $\frac{(t+1)^{2k}-1}{t} \leq (t+1)^{2k} \leq (t+1)^{2/\delta}$ vertices, as claimed. 
\end{proof}
\cref{thm:bounded domination} is proved in a very similar way; the only difference is that we do not need to pick out the maximal collection $a_1,\dots,a_k$, and instead use the given dominating set of \nolinebreak bounded \nolinebreak size.
\begin{proof}[Proof of \cref{thm:bounded domination}]
    We proceed by induction on $\gamma(G)$. The base case $\gamma(G)=1$ is trivial as then $G$ is a star, hence homomorphic to $K_2$. For the inductive step, let $a_1,\dots,a_k$ be a dominating set of $G$ with $k=\gamma(G) \geq 2$. Let $I = N(a_k)$ and let $U = V(G) \setminus I$. Note that $G_0 \coloneqq G[U \setminus \{a_k\}]$ has domination number at most $k-1$, as $a_1,\dots,a_{k-1}$ is a dominating set of $G_0$. Moreover, as a subgraph of $G$, we have that $G_0$ is $\C_{2t+3}$-free. By the inductive hypothesis, there is a $\C_{2t+1}$-free graph $\Gamma_0$ on at most $3 \cdot (t+1)^{k-2}-1$ vertices, as well as a homomorphism $\phi_0:G_0 \to \Gamma_0$. Note that $a_k$ is an isolated vertex in $G[U]$, hence we may extend $\phi_0$ to a homomorphism $\phi:G[U] \to \Gamma_0$ by mapping $a_k$ to an arbitrary vertex of $\Gamma_0$.

    As in the proof of \cref{thm:C3C5C7}, one can show that $N^i(I)$ is independent for every $0 \leq i \leq t$, using the fact that $N^i(a_k)$ is independent for every $0 \leq i \leq t+1$, which holds because $G$ is $\C_{2t+3}$-free.
    Therefore, by \cref{lem:generalised extend hom}, there is a homomorphism $\psi:G \to \Gamma$, where $\Gamma = M_t({\Gamma_0})$. Also, $\Gamma$ is $\C_{2t+1}$-free by \cref{lem:t-fold odd girth}, and 
    \[
    \abs \Gamma = (t+1)\abs{\Gamma_0}+1 \leq (t+1)(3 \cdot (t+1)^{k-2}-1) + 1 = 3\cdot (t+1)^{k-1} -t \leq 3\cdot (t+1)^{k-1}-1,
    \]
    as claimed.
\end{proof}

Finally, we turn to the proof of \cref{cor:bounded VC}, for which we need to recall a few results of VC theory. Let $\cF \subseteq 2^V$ be a set system on a finite ground set $V$. We say that $\cF$ \emph{shatters} a set $S \subseteq V$ if, for all $T \subseteq S$, there exists $F \in \cF$ with $F \cap S = T$. The \emph{VC dimension} of $\cF$ is then defined as the maximum size of a shattered set $S \subseteq V$. For a graph $G$, we define its VC dimension to be the VC dimension of the set system $\cF \coloneqq \{N(v):v \in V(G)\} \subseteq 2^{V(G)}$. Note that this agrees with the definition of VC dimension of a graph discussed in the Introduction.

The key fact that we need about set systems of bounded VC dimension is the fundamental $\varepsilon$-net theorem of Haussler and Welzl \cite{MR884223}. A set $A \subseteq V$ is called an \emph{$\varepsilon$-net} for $\cF \subseteq 2^V$ if $A$ intersects every $F \in \cF$ with $\abs F \geq \varepsilon \abs V$. Haussler and Welzl proved the following.
\begin{theorem}[{\cite[Theorem 3.8]{MR884223}}]\label{thm:eps-nets}
    If $\cF\subseteq 2^V$ has VC dimension at most $d$, then $\cF$ has an $\varepsilon$-net of size at most $\frac{8d}{\varepsilon} \log \frac{8d}{\varepsilon}$.
\end{theorem}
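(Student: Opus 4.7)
The plan is to follow the classical double-sampling (a.k.a.\ swapping) argument of Haussler and Welzl, which combines a probabilistic sampling argument with the Sauer--Shelah lemma. Set $m = \lceil \tfrac{8d}{\varepsilon}\log\tfrac{8d}{\varepsilon}\rceil$ and draw $A = (a_1,\dots,a_m)$ independently and uniformly at random from $V$. I will show that with positive probability the underlying set of $A$ is an $\varepsilon$-net, which suffices because $|A| \le m$. Let $E_1$ denote the bad event that some $F \in \cF$ with $|F|\ge \varepsilon|V|$ satisfies $F \cap A = \varnothing$; the goal is $\Pr[E_1] < 1$.

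Introduce an independent ``ghost sample'' $B = (b_1,\dots,b_m)$ drawn identically to $A$, and let $E_2$ be the event that some $F \in \cF$ satisfies $F\cap A = \varnothing$ and $|F \cap B| \ge \varepsilon m/2$. The first step is to prove $\Pr[E_2] \ge \tfrac{1}{2}\Pr[E_1]$. Condition on any realisation of $A$ in $E_1$ and fix a witness $F = F(A)$; then $|F\cap B|$ is a sum of $m$ independent Bernoullis of mean $\mu \ge \varepsilon m \ge 8$, so Chebyshev gives $\Pr[|F\cap B| < \mu/2] \le 4/\mu \le 1/2$. Since we chose $F$ depending only on $A$, the ghost sample $B$ certifies $E_2$ with conditional probability at least $1/2$.

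The second step bounds $\Pr[E_2]$ via a symmetrisation trick. Realise the pair $(A,B)$ by first sampling the ordered multiset $A\cup B$ of size $2m$ and then choosing a uniformly random equipartition into two blocks of size $m$ (labelling one block $A$ and the other $B$). Condition on $A \cup B$. By the Sauer--Shelah lemma, the restriction $\{F \cap (A\cup B) : F \in \cF\}$ has at most $\binom{2m}{\le d} \le (2em/d)^d$ distinct sets. For any fixed trace $T$ with $|T|\ge \varepsilon m/2$, the probability over the random equipartition that $T$ is contained entirely in the $B$-block is at most $\binom{2m-|T|}{m}/\binom{2m}{m} \le 2^{-|T|} \le 2^{-\varepsilon m/2}$. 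A union bound over the traces yields $\Pr[E_2\mid A\cup B] \le (2em/d)^d \cdot 2^{-\varepsilon m/2}$, and hence the same bound on $\Pr[E_2]$.

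Combining the two steps, $\Pr[E_1] \le 2(2em/d)^d \cdot 2^{-\varepsilon m/2}$. Plugging in $m = \tfrac{8d}{\varepsilon}\log\tfrac{8d}{\varepsilon}$ gives $\varepsilon m/2 = 4d\log\tfrac{8d}{\varepsilon}$, while $d\log(2em/d) = d\log\!\left(\tfrac{16e}{\varepsilon}\log\tfrac{8d}{\varepsilon}\right)$, which is comfortably smaller once $\varepsilon < 1$, making the right-hand side strictly less than $1$. The conceptual heart of the argument lies in the two uses of the VC hypothesis (via Sauer--Shelah in step~2) and in the swap that converts the continuous event ``$F$ is large in $V$'' into the combinatorial event ``$F$ is large in the finite sample $A\cup B$''. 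The main obstacle I anticipate is purely bookkeeping: checking that the absolute constant $8$ in $\tfrac{8d}{\varepsilon}\log\tfrac{8d}{\varepsilon}$ is already sufficient to make the inequality above strict for all $d\ge 1$ and $\varepsilon \in (0,1)$, which requires a careful comparison of $\log\log(8d/\varepsilon)$ against $\log(8d/\varepsilon)$ at the boundary values.
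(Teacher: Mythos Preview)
The paper does not prove this statement; it is quoted as a black-box result from Haussler and Welzl and used only to deduce \cref{lem:VC implies domination}. Your proposal is the standard double-sampling (ghost-sample) proof of the $\varepsilon$-net theorem, and the sketch is correct: the reduction $\Pr[E_1] \le 2\Pr[E_2]$ via Chebyshev on the ghost sample, the symmetrisation by a random equipartition of the $2m$ sampled points, and the union bound over the at most $(2em/d)^d$ traces given by Sauer--Shelah are exactly the ingredients of the original Haussler--Welzl argument. The bookkeeping you flag as the only obstacle does go through with the constant $8$: writing $L = \log_2(8d/\varepsilon) \ge 3$, the requirement $2(2em/d)^d 2^{-\varepsilon m/2} < 1$ becomes $1/d + \log_2(2e) + \log_2(L/d) < 3L$, which holds for all $d \ge 1$ and $L \ge 3$. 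So there is nothing to compare against here: you have supplied a valid proof where the paper simply cites one.
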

For our purposes, \cref{thm:eps-nets} immediately implies that graphs of bounded VC dimension and linear minimum degree have bounded domination number.
\begin{lemma}\label{lem:VC implies domination}
    If $G$ is a graph with VC dimension at most $d$ and minimum degree at least $\delta \abs G$, then the domination number of $G$ is at most $\frac{8d}{\delta} \log \frac{8d}{\delta}$.
\end{lemma}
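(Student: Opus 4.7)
The plan is to deduce this directly from the $\varepsilon$-net theorem (\cref{thm:eps-nets}) applied to the neighbourhood set system of $G$. The key observation is that a subset $A \subseteq V(G)$ is a dominating set precisely when $A \cap N(v) \neq \varnothing$ for every $v \in V(G)$ (where we can throw $v$ into $A$ if $v$ itself has no neighbour in $A$, but in fact the condition of intersecting every neighbourhood is slightly stronger and suffices on its own). So to find a small dominating set, it is enough to find a small set that hits every set in the system $\cF = \{N(v) : v \in V(G)\}$.

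First I would observe that, by definition, the set system $\cF = \{N(v) : v \in V(G)\} \subseteq 2^{V(G)}$ has VC dimension at most $d$. Next, I would use the minimum degree hypothesis: since $\deg(v) \geq \delta \abs G$ for every $v$, every element $N(v) \in \cF$ satisfies $\abs{N(v)} \geq \delta \abs{V(G)}$. Therefore an $\varepsilon$-net for $\cF$ with $\varepsilon = \delta$ is automatically a set $A \subseteq V(G)$ with $A \cap N(v) \neq \varnothing$ for every $v$, i.e.\ every vertex of $G$ has a neighbour in $A$, so $A$ is a dominating set of $G$.

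Finally, I would apply \cref{thm:eps-nets} with parameters $d$ and $\varepsilon = \delta$ to conclude that such an $A$ exists of size at most $\frac{8d}{\delta}\log\frac{8d}{\delta}$, which gives the claimed bound $\gamma(G) \leq \frac{8d}{\delta}\log\frac{8d}{\delta}$. There is no real obstacle here: the content of the lemma lies entirely in the Haussler--Welzl theorem, and the proof amounts to noticing that ``dominating'' translates directly into ``$\varepsilon$-net for the neighbourhood system'' when the minimum degree is at least $\delta \abs G$.
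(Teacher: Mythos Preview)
Your proposal is correct and follows essentially the same approach as the paper: define $\cF = \{N(v):v\in V(G)\}$, note it has VC dimension at most $d$, apply \cref{thm:eps-nets} with $\varepsilon=\delta$, and observe that the resulting $\delta$-net dominates $G$ because every neighbourhood has size at least $\delta\abs G$. The paper's argument is line-for-line the same.
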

\begin{proof}
    Let $\cF \coloneqq \{N(v): v \in V(G)\} \subseteq 2^{V(G)}$. By assumption, $\cF$ has VC dimension at most $d$. Therefore, \cref{thm:eps-nets} implies that there is a $\delta$-net $A\subseteq V(G)$ for $\cF$ of size at most $\frac{8d}{\delta} \log \frac{8d}{\delta}$. We claim that $A$ is a dominating set of $G$. Indeed, fix some $w \in V(G)$. By the minimum degree condition, we have that $\abs{N(w)} \geq \delta \abs G$, hence there must exist some $v \in A$ such that $v \in N(w)$. But this exactly means that $w$ is dominated by $A$, as claimed.
\end{proof}
\noindent
With this lemma in hand, \cref{cor:bounded VC} follows immediately.
\begin{proof}[Proof of \cref{cor:bounded VC}]
    Let $G$ be a $\C_{2t+3}$-free graph with VC dimension at most $d$ and minimum degree at least $\delta \abs G$. By \cref{lem:VC implies domination}, we have $\gamma(G) \leq \frac{8d}{\delta} \log \frac{8d}{\delta}$. Therefore, \cref{thm:bounded domination} implies that $G$ is homomorphic to a $\C_{2t+1}$-free graph $\Gamma$ with
    \[
    \abs \Gamma \leq 3 \cdot (t+1)^{\gamma(G)}\leq 3 \cdot (t+1)^{\frac{8d}{\delta} \log \frac{8d}{\delta}}.\qedhere
    \]
\end{proof}

\section{Approximate homomorphisms: upper bounds}\label{sec:approx hom upper}
In this section, we prove our upper bounds on $M_{F,H}(\varepsilon)$, beginning with \cref{prop:exponential approx hom}.
\subsection{Proof of Proposition \ref{prop:exponential approx hom}}

In the course of the proof of \cref{prop:exponential approx hom}, we will need a consequence of the Frieze--Kannan weak regularity lemma \cite{MR1723039} and the counting lemma for the cut distance. In order to state it, we will need to set up some notation. Recall that $t(H,G)$ denotes the \emph{homomorphism density} from $H$ to $G$, namely the probability that a random function $V(H) \to V(G)$ is a homomorphism. Let $W$ be a weighted graph, that is, a symmetric function $W:V \times V \to [0,1]$, where $V$ is some finite set; note that we are allowing $W$ to have ``loops'', by allowing $W(x,x)$ to be non-zero. The homomorphism density $t(H,W)$ is then defined analogously, as 
\[
t(H,W)\coloneqq \frac{1}{\abs V^{\abs H}} \sum_{v_1,\dots,v_h \in V} \prod_{ij \in E(H)} W(v_i, v_j).
\]
Equivalently, this is the expected value of $\prod_{ij \in E(H)} W(v_i,v_j)$, where $v_1,\dots,v_h$ are independent uniformly random choices of vertices from $V$. Note that this definition recovers the earlier definition of $t(H,G)$ in case $W=G$ is a graph, that is, a $\{0,1\}$-valued function. Finally, for a graph $G$ and a partition $\p = A_1 \sqcup \dotsb \sqcup A_M$ of $V(G)$, we denote by $G_\p$ the weighted graph whose vertex set is $[M]$, and where the edge weight of a pair $(i,j)$ is given by\footnote{If $A,B$ are vertex subsets of a graph $G$, we denote by $e_G(A,B)$ the number of pairs in $A \times B$ that are edges of $G$, and by $d_G(A,B) \coloneqq e_G(A,B)/(\abs A \abs B)$ the \emph{edge density}.} $d_G(A_i, A_j)$. Also, $\p$ is called \emph{equitable} if $\abs{\abs{A_i}-\abs{A_j}}\leq 1$ for all $i,j$.\footnote{To keep the presentation simple, we will always assume that $|G|$ is divisible by $M$, so that $\abs{A_i} = \abs{A_j}$ for all $i,j$. This does not affect our asymptotic results.}

The following result follows immediately by combining the statements of \cite[Lemma 9.3 and Exercise 9.7]{lovasz_book} (the Frieze--Kannan weak regularity lemma) and \cite[Lemma 10.23]{lovasz_book} (the counting lemma for the cut distance).
\begin{lemma}\label{lem:FK}
	Let $G$ be a graph and let $M\geq 1$ be an integer. There exists an equitable partition $\p$ of $V(G)$ into $M$ parts such that, for any graph $H$, we have
	\[
	\abs{t(H,G) - t(H,G_\p)} \leq \frac{4e(H)}{\sqrt{\log M}}.
	\]
\end{lemma}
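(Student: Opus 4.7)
The plan is to prove the lemma by combining two classical ingredients: a Frieze--Kannan-style weak regularity lemma, which produces an equitable partition $\p$ into $M$ parts whose step approximant $G_\p$ is close to $G$ in cut norm, and a counting lemma for the cut norm, which converts cut-norm closeness into closeness of homomorphism densities. These are exactly the two results cited as \cite[Lemma 9.3 and Exercise 9.7]{lovasz_book} and \cite[Lemma 10.23]{lovasz_book}, but below I sketch how I would prove each. Throughout, I identify $G$ with the $\{0,1\}$-valued symmetric function $W_G\colon V(G)\times V(G)\to\{0,1\}$, define $W_{G_\p}$ to be the step function whose value on $A_i\times A_j$ is $d_G(A_i,A_j)$, and use that by splitting the sum over the parts one has $t(H,G_\p)=t(H,W_{G_\p})$.

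For the weak regularity step, define the cut norm by
\[
\|W\|_\square \coloneqq \max_{S,T\subseteq V(G)} \frac{1}{|V(G)|^2}\Bigl|\sum_{x\in S,\, y\in T} W(x,y)\Bigr|.
\]
I would run the standard mean-square-energy increment argument. Starting from the trivial partition, as long as $\|W_G - W_{G_\p}\|_\square > \eta$ for a suitable $\eta$ of order $1/\sqrt{\log M}$, the witness sets $S,T$ can be used to refine the current partition into at most four times as many parts, and a short $L^2$-projection calculation shows that the energy $\|W_{G_\p}\|_2^2$ strictly grows by at least $\eta^2$. Since the energy is bounded by $1$, the process terminates after $O(\eta^{-2})$ steps and produces a partition with at most $4^{O(\eta^{-2})}$ parts; tuning the implicit constant in $\eta$ so that this number is at most $M$ gives the desired cut-norm bound. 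A final equitable refinement (slicing each part into blocks of size $|V(G)|/M$ and reallocating leftovers) makes the partition equitable with exactly $M$ parts while, as in \cite[Exercise 9.7]{lovasz_book}, preserving cut-norm closeness to $W_G$ up to constants absorbed into the choice of $\eta$.

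For the counting lemma, set $W_1 \coloneqq W_G$ and $W_2 \coloneqq W_{G_\p}$ and enumerate the edges of $H$ as $e_1,\dots,e_m$ with $m = e(H)$. I would telescope through hybrids $W^{(k)}$ which use $W_2$ on $e_1,\dots,e_k$ and $W_1$ on the remaining edges, so that
\[
t(H,W_1) - t(H,W_2) = \sum_{k=1}^{m}\bigl( t(H,W^{(k-1)}) - t(H,W^{(k)}) \bigr).
\]
The $k$th summand is obtained by freezing the two endpoints $x,y$ of $e_k$, averaging over the remaining vertices of $H$ against a product of entries of $W_1$ and $W_2$ (all lying in $[0,1]$), and then integrating $(W_1 - W_2)(x,y)$ against a product test function $f(x)g(y)$ with $\|f\|_\infty,\|g\|_\infty\le 1$. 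Decomposing $f$ and $g$ into their positive and negative parts and applying the standard bound that $\bigl|\sum_{x,y}(W_1-W_2)(x,y)f(x)g(y)\bigr|\le 4\|W_1-W_2\|_\square\cdot|V(G)|^2$ yields $|t(H,W_1) - t(H,W_2)|\le 4 e(H)\,\|W_1 - W_2\|_\square$.

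The main technical obstacle I foresee is the bookkeeping needed to simultaneously force the final partition to be equitable, to have \emph{exactly} $M$ parts, and to keep the cut-norm bound of the clean form $1/\sqrt{\log M}$; the equitable refinement step must be shown not to damage the cut-norm distance beyond a constant absorbed into the final factor of $4e(H)$. Once both ingredients are in place, combining the cut-norm bound $\|W_G-W_{G_\p}\|_\square\le 1/\sqrt{\log M}$ with the counting lemma immediately yields $|t(H,G) - t(H,G_\p)|\le 4e(H)/\sqrt{\log M}$, as required.
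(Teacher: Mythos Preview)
Your proposal is correct and follows precisely the approach the paper takes: the paper simply states that the lemma ``follows immediately by combining'' the Frieze--Kannan weak regularity lemma (\cite[Lemma 9.3 and Exercise 9.7]{lovasz_book}) with the counting lemma for the cut distance (\cite[Lemma 10.23]{lovasz_book}), and your sketch unpacks exactly these two ingredients. In fact you supply considerably more detail than the paper, which offers no proof beyond the citation.
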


\noindent
Using \cref{lem:FK}, it is straightforward to prove \cref{prop:exponential approx hom}. The idea of the proof is very simple: we will apply \cref{lem:FK} to obtain a ``small'' weighted graph $\Gamma$ that is a good approximation for $G$. If $\Gamma$ is far from being $F$-hom-free, then we may apply the assumption of \cref{prop:exponential approx hom} to conclude that it contains many copies of $H$, which, by the statement of \cref{lem:FK}, implies that $G$ itself contains many copies of $H$. As $G$ is assumed to be $H$-hom-free, this is impossible, from which we conclude that $\Gamma$ is close to being $F$-hom-free. But then by deleting a few edges from $\Gamma$ to make it $F$-hom-free, we obtain an approximate homomorphism from $G$ to a small $F$-hom-free graph, as desired.
\begin{proof}[Proof of \cref{prop:exponential approx hom}]
	Let 
	$M = 2^{K^2}$, where $K$ is as in \eqref{eq:Frieze-Kannan m},
	and let $G$ be an $n$-vertex $H$-hom-free graph. Let $\p = V_1 \sqcup \dotsb \sqcup V_M$ be the partition of $V(G)$ into $M$ parts given by \cref{lem:FK}. Let $\Gamma_0$ be the simple graph on vertex set $[M]$ whose edges are those pairs $(i,j)$ with $d_G(V_i,V_j) \geq \frac\e2$ (note that $\Gamma_0$ has no loops). Finally, let $\Gamma$ be a maximum $F$-hom-free subgraph of $\Gamma_0$. Note that there is a natural map $\phi:V(G) \to V(\Gamma_0)=V(\Gamma)$, simply mapping each vertex $v$ to the index $i$ such that $v \in V_i$. Also, $\Gamma$ is an $F$-hom-free $M$-vertex graph by construction, so it suffices to prove that $\phi$ is an $\varepsilon$-approximate homomorphism.

	To prove this, we first claim that $\Gamma_0$ is $\frac\varepsilon2$-close to $F$-hom-free. Indeed, if not, then by the definition of $\delta$, we find that $\Gamma_0$ contains at least $\delta(\frac \e 2)\cdot M^{\abs H}$ copies of $H$. Every such copy corresponds to $\abs H$ distinct vertices $v_1,\dots,v_{\abs H}$ in $V(\Gamma_0)=V(G_\p)$, such that all the $e(H)$ pairs $(v_i,v_j)$ corresponding to edges of $H$ are present in $\Gamma_0$, and thus satisfy $d_G(V_{v_i},V_{v_j}) \geq \frac \e 2$. In particular, we have
	\[
	\prod_{ij \in E(H)} G_\p(v_i,v_j) = \prod_{ij \in E(H)} d_G(V_{v_i},V_{v_j}) \geq \left(\frac \e 2\right)^{e(H)}.
	\]
	Summing this up over all the copies of $H$ in $\Gamma_0$, of which there are at least $\delta(\frac \e2)\cdot M^{\abs H}$, we conclude that
	\[
	t(H,G_\p) \geq \delta \left(\frac \e 2\right) \cdot \left(\frac \e 2\right)^{e(H)}.
	\]
	But by \cref{lem:FK}, this implies that
	\begin{align*}
	t(H,G) \geq t(H,G_\p) - \frac{4e(H)}{\sqrt{\log M}} 
	&\geq \delta \left(\frac \e 2\right)\cdot \left(\frac \e 2\right)^{e(H)} - \frac{4e(H)}{K}\\
	&=\frac 15 \delta\left(\frac \e 2\right)\cdot \left(\frac \e 2\right)^{e(H)}\\
	&>0,
	\end{align*}
	where the equality uses the choice of $K$ in \eqref{eq:Frieze-Kannan m}.
	This contradicts our assumption that $G$ is $H$-hom-free, showing that $\Gamma_0$ is, as claimed, $\frac \e2$-close to $F$-hom-free. In particular, as $\Gamma$ is a maximum $F$-hom-free subgraph of $\Gamma_0$, we conclude that
	\begin{equation}\label{eq:few edges deleted}
	e(\Gamma) \geq e(\Gamma_0) - \frac \e 2 M^2.
	\end{equation}
	We now prove that $\phi$ is an $\e$-approximate homomorphism.
	There are three types of edges of $G$ that are mapped to non-edges of $\Gamma$: those edges within a part $V_i$, those edges between parts $(V_i,V_j)$ with $d_G(V_i,V_j)<\frac\e2$, and those edges between parts $(V_i,V_j)$ such that $ij \in E(\Gamma_0) \setminus E(\Gamma)$. Since the partition is equitable, there are at most $M\binom{n/M}2 \leq \frac{n^2}{2M}$ edges of the first type. For the second type, there are at most $\binom M2$ such pairs, and each pair contributes at most $\frac\e2 (\frac nM)^2$ such edges, hence there are at most $\binom M2 \cdot \frac\e 2(\frac nM)^2 \leq \frac \e 4 n^2$ edges of the second type. Finally, for the third type, by \eqref{eq:few edges deleted}, there are at most $\frac \e2 M^2$ such pairs $(V_i,V_j)$, and each contributes at most $(\frac nM)^2$ edges, so the number of edges of the third type is at most $\frac \e 2 n^2$. In total, the number of edges of $G$ mapped to non-edges of $\Gamma$ is at most
	\[
	\frac{n^2}{2M} + \frac \e 4 n^2 + \frac \e 2 n^2 =  \left(\frac{1}{2M} + \frac{3\e}4\right)n^2 \leq \e n^2,
	\]
	since $M \geq \frac 2 {\e}$. Hence, $\phi$ is an $\e$-approximate homomorphism.
\end{proof}

\subsection{Proof of Theorem \ref{thm:subdivision blowup polynomial bound}}
In this section we prove \cref{thm:subdivision blowup polynomial bound}.
We need the following simple lemma.
\begin{lemma}\label{lem:pull out vtxs}
	Let $G$ be an $n$-vertex graph and let $\varepsilon>0$. There exist (not necessarily distinct)\footnote{By slightly modifying the proof, we could also guarantee that the vertices $v_1,\dots,v_k$ are pairwise distinct and $\{v_1,\dots,v_{k}\}$ is disjoint from $S_1,\dots,S_k$.} vertices $v_1,\dots,v_k \in V(G)$ and disjoint sets $S_1,\dots,S_k \subseteq V(G)$ with the following properties.
	\begin{enumerate}
		\item $\abs{S_i} = \varepsilon n/3$ for all $1\leq i \leq k$; hence $k \leq 3/\varepsilon$;
		\item for every $1 \leq i \leq k$, $v_i$ is adjacent to all vertices in $S_i$; and
		\item letting $X \coloneqq V(G) \setminus (S_1 \cup \dotsb \cup S_k)$, there are at most $\varepsilon n^2/2$ edges incident to $X$.
	\end{enumerate}
\end{lemma}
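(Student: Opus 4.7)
The plan is to run a straightforward greedy \emph{pull-out} procedure. We would maintain a ``leftover'' set $X \subseteq V(G)$, initialised to $X := V(G)$, and iteratively perform the following step: as long as there exists some vertex $v \in V(G)$ with $\abs{N_G(v) \cap X} \geq \varepsilon n/3$, we select such a $v$ to be the next $v_i$, let $S_i$ be an arbitrary subset of $N_G(v) \cap X$ of size exactly $\varepsilon n/3$, and then remove $S_i$ from $X$. We stop when no such $v$ remains, and let $X$ be the resulting final set.

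Properties (1) and (2) of the lemma would then hold by construction, and the sets $S_1,\dots,S_k$ are pairwise disjoint since each is removed from $X$ upon creation. Since each iteration removes exactly $\varepsilon n/3$ vertices from $X$, and $|X| \le n$ throughout, the number of iterations satisfies $k \le 3/\varepsilon$.

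For property (3), upon termination every vertex $v \in V(G)$ satisfies $\abs{N_G(v) \cap X} < \varepsilon n/3$, so by double counting,
\[
\sum_{v \in V(G)} \abs{N_G(v) \cap X} \;=\; 2\,e(G[X]) + e_G(X, V(G) \setminus X) \;<\; n \cdot \frac{\varepsilon n}{3} \;=\; \frac{\varepsilon n^2}{3}.
\]
Since the middle quantity is at least the total number of edges incident to $X$, this bound gives at most $\varepsilon n^2/3 \leq \varepsilon n^2 / 2$ such edges, yielding property (3).

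There is really no serious obstacle here: the entire argument is a one-paragraph greedy algorithm, and the only genuine book-keeping is the elementary counting identity in the last step. The flexibility that the $v_i$ need not be distinct is natural from this viewpoint, since we allow $v_i$ to be chosen from all of $V(G)$ (possibly from some earlier $S_j$), and we never need to distinguish cases based on where $v_i$ lies.
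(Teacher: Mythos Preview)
Your proof is correct and follows the same greedy pull-out approach as the paper. The paper's version differs only cosmetically: it terminates when the number of edges incident to $X$ falls below $\varepsilon n^2/2$, and then splits into two cases (many edges inside $X$ versus many edges between $X$ and its complement) to locate a vertex with at least $\varepsilon n/3$ neighbours in $X$; your degree-based termination criterion sidesteps that case split and verifies property~(3) by a single double-count, even yielding the slightly sharper bound $\varepsilon n^2/3$.
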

\begin{proof}
	We run the following greedy algorithm to construct $v_1,\dots,v_k,S_1,\dots,S_k$. 
	\begin{enumerate}
		\item Suppose we have already defined $v_1,\dots,v_\ell,S_1,\dots,S_\ell$. Let $S = S_1 \cup \dotsb \cup S_\ell$ (in case $\ell=0$, set $S=\varnothing$).\label{step:init}
		\item Let $X = V(G) \setminus S$. If there are at most $\varepsilon n^2/2$ edges incident to $X$, terminate the algorithm.
		\item If there are at least $\varepsilon n^2/6$ edges inside $X$, then there is some vertex $v \in X$ which is adjacent to at least $\varepsilon n/3$ vertices of $X$. We set $v_{\ell+1}=v$ and let $S_{\ell+1}$ be an arbitrary set of $\varepsilon n/3$ neighbours of $v$ in $X$. Then $S_{\ell+1}$ is disjoint from $S_1,\dots,S_\ell$ since $S_{\ell+1} \subseteq X = V(G) \setminus(S_1 \cup \dotsb \cup S_\ell)$.\footnote{It may be that $v$ has already appeared in the sequence $v_1,\dots,v_\ell$, which is permissible since we do not require these vertices to be distinct.} Return to step \ref{step:init}.
		\item If there are at least $\varepsilon n^2/2$ edges incident to $X$ but fewer than $\varepsilon n^2/6$ edges inside $X$, there must be at least $\varepsilon n^2/3$ edges between $S$ and $X$. Hence, there exists 
        $v \in S$ which is adjacent to at least $\varepsilon n/3$ vertices in $X$. We again let $v_{\ell+1}=v$ and let $S_{\ell+1}$ be an arbitrary set of $\varepsilon n/3$ neighbours of $v$ in $X$. Again, $S_{\ell+1}$ is disjoint from $S_1,\dots,S_\ell$. Return to step \ref{step:init}.
	\end{enumerate}
	At the end of this process, we have found vertices $v_1,\dots,v_k$ and sets $S_1,\dots,S_k$. As discussed above, the sets $S_1,\dots,S_k$ are pairwise disjoint and satisfy $\abs{S_i}=\varepsilon n/3$ for all $i$. 
    Additionally, by construction, we have that $v_i$ is adjacent to all vertices in $S_i$, and that there are at most $\varepsilon n^2/2$ edges incident to $X=V(G)\setminus (S_1\cup \dotsb \cup S_k)$, since this is precisely the condition for terminating the algorithm.
\end{proof}

\begin{remark}
    As mentioned in the introduction, \cref{lem:pull out vtxs} can be used to quickly deduce the bound $M_{K_3,C_5}(\varepsilon) \leq 2^{O(1/\varepsilon)}$ from \cref{thm:bounded domination}. Indeed, suppose that $G$ is $\C_5$-free, let $(v_i,S_i)_{i=1}^k,X$ be given by \cref{lem:pull out vtxs}, and set $G_0 = G - X$. Map $V(G)$ to $V(G_0)$ by mapping each $v \in V(G_0)$ to itself and mapping each vertex in $X$ to an arbitrary vertex in $G_0$. This is an $\varepsilon$-approximate homomorphism from $G$ to $G_0$, because only edges touching $X$ are mapped to non-edges, and there are at most $\varepsilon n^2$ edges touching $X$. Also, $G_0$ is $\C_5$-free and has domination number at most $k \leq 3/\varepsilon$. 
At this point we may apply \cref{thm:bounded domination} to $G_0$ to obtain the claimed result. 
\end{remark}

\noindent
Next, we prove \cref{thm:subdivision blowup polynomial bound}.
\begin{proof}[Proof of \cref{thm:subdivision blowup polynomial bound}]
	Let $G$ be an $n$-vertex $F^{\bullet\bullet}$-hom-free graph. We apply \cref{lem:pull out vtxs} to find vertices $v_1,\dots,v_k$ and pairwise disjoint sets $S_1,\dots,S_k$ with the properties given in \cref{lem:pull out vtxs}. Let $X = V(G) \setminus (S_1 \cup \dotsb \cup S_k)$. 

	Let $\Gamma$ be the graph with vertex set $\{x,s_1,\dots,s_k\}$ in which $x$ is an isolated vertex, and where $s_is_j \in E(\Gamma)$ if and only if $e(S_i,S_j)>0$.
	Note that $\abs \Gamma = 1+k \leq 1 + 3/\varepsilon=O(\frac 1 \varepsilon)$.

	We define a map $\phi:V(G) \to V(\Gamma)$ by mapping every vertex in $X$ to $x$ and every vertex in $S_i$ to $s_i$, for all $1 \leq i \leq k$.
	We first claim that $\phi$ is an $\e$-approximate homomorphism.

	Note that the only edges of $G$ mapped to non-edges of $\Gamma$ by $\phi$ are edges incident to $X$ and edges contained in some $S_i$. Indeed,
	every other edge of $G$ goes between two vertices $v \in S_i, v' \in S_j$ for some $1 \leq i<j\leq k$. As $vv'$ is an edge between $S_i$ and $S_j$, we see that $s_is_j \in E(\Gamma)$, hence $\phi$ maps $vv'$ to an edge of $\Gamma$. By \cref{lem:pull out vtxs}, there are at most $\varepsilon n^2/2$ edges incident to $X$. Additionally, the number of edges contained in some $S_i$ is at most
	\[
		\sum_{i=1}^k \binom{\abs{S_i}}2 \leq \sum_{i=1}^k \frac{\abs{S_i}^2}{2} = k \cdot \frac{(\varepsilon n/3)^2}{2} \leq \frac{\varepsilon n^2}{6},
	\]
	using the upper bounds on $k$ and $\abs{S_i}$ given by \cref{lem:pull out vtxs}. Since $\varepsilon n^2/2 + \varepsilon n^2/6 \leq \varepsilon n^2$, we conclude that 
	$\phi$ is indeed an $\e$-approximate homomorphism.

	It remains to check that $\Gamma$ is $F$-hom-free, so suppose for contradiction that there is a homomorphism $\psi:F \to \Gamma$. Since $x$ is an isolated vertex in $\Gamma$, we may assume that $x \notin \psi(V(F))$, since any vertex mapped to $x$ must be isolated in $F$, and hence can be mapped to some $s_i$ instead. Let the vertices of $F$ be $u_1,\dots,u_{\abs F}$, and define $\ell_1,\dots,\ell_{\abs F}$ by $\psi(u_1) = s_{\ell_1},\dots,\psi(u_{\abs F})=s_{\ell_{\abs F}}$.

	For every edge $u_i u_j \in E(F)$, we have that $\psi(u_i) \psi(u_j) \in E(\Gamma)$, hence $s_{\ell_i}$ is adjacent to $s_{\ell_j}$ in $\Gamma$. By the definition of $\Gamma$, we conclude that $G$ contains an edge between $S_{\ell_i}$ and $S_{\ell_j}$. That is, we can pick some $x_{ij} \in S_{\ell_i}, x_{ji} \in S_{\ell_j}$ such that $x_{ij} x_{ji} \in E(G)$.

	Now, we define a homomorphism $F^{\bullet \bullet} \to G$ as follows. For each vertex of $F^{\bullet \bullet}$ which corresponds to an original vertex $u_i$ of $F$, we map it to $v_{\ell_i}$. For the two new vertices added on an edge $u_{i}u_j$, we map them to $x_{ij}$ and $x_{ji}$. This is indeed a homomorphism, since $v_{\ell_i}$ is adjacent to all vertices in $S_{\ell_i}$ by construction, and $x_{ij}$ is adjacent to $x_{ji}$ by the way we picked these vertices. This contradicts the assumption that $G$ is $F^{\bullet \bullet}$-hom-free, and completes the proof.
\end{proof}

\section{Proof of Lemma \ref{prop:no-approx-hom}}\label{sec:key lemma proof}
We now turn to proving our exponential lower bound on $M_{F,H}(\varepsilon)$. This section contains the proof of the key technical lemma, \cref{prop:no-approx-hom}.

Following \cite{FZ}, the proof of \cref{prop:no-approx-hom}  uses the language of entropy. We now recall the necessary notions. We also refer the reader to \cite[Section 2]{Galvin} for additional information.
Given a discrete random variable $X$, we use $H(X)$ to denote the binary entropy of $X$. For two random variables $X,Y$, $H(X \mid Y) \coloneqq H(X,Y) - H(Y)$ is the conditional entropy of $X$ given $Y$, and $I(X; Y) \coloneqq H(X) - H(X\mid Y) = H(X) + H(Y) - H(X,Y)$ is the mutual information of $X,Y$. 
Note that $I(X;Y) = I(Y;X)$.
We will use the fact that if $X_1,\dots,X_m$ are independent random variables, then
\begin{equation}\label{eq:mutual information supermodularity}
\sum_{i=1}^m I(X_i;Y) \leq I((X_1,\dots,X_m);Y).
\end{equation}
This holds because $H(X_1,\dots,X_m) = \sum_{i=1}^m H(X_i)$ for independent $X_1,\dots,X_m$, and because
$H((X_1,\dots,X_m) \mid Y) \leq \sum_{i=1}^m H(X_i \mid Y)$, which is the subadditivity of (conditional) entropy.

We further define the function $H^{-1}(x)$ on $x\in [0,1]$ to be the unique value $p\in [0,1/2]$ such that $H(\Ber(p)) = x$.
Thus, for a Bernoulli random variable $X$ with entropy at least $h$, we have $\Prob[X=0],\Prob[X=1] \geq H^{-1}(h)$.
We need the following simple claim.

\begin{lemma}\label{lem:large entropy implies large probability of all outcomes}
	Let $Z$ be a random variable taking values in $\{1,\dots,k\}$ and suppose that $H(X) \geq \log k - \beta$. Then for every $a \in \{1,\dots,k\}$, $\mathbb{P}[X = a] \geq H^{-1}(h)$, where $h = \log(\frac{k}{k-1}) - \beta$. 
\end{lemma}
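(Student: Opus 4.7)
The plan is to isolate the probability $p_a \coloneqq \Prob[Z=a]$ by encoding it in a Bernoulli indicator variable, and then leverage the entropy bound on $Z$ to deduce a nontrivial lower bound on the entropy of this indicator; inverting $H(\Ber(\cdot))$ then yields the claimed bound on $p_a$.

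Concretely, fix any $a \in \{1,\dots,k\}$ and let $Y \coloneqq \mathbbm{1}[Z=a]$, which is a Bernoulli random variable with parameter $p_a$. Since $Y$ is a deterministic function of $Z$, the chain rule gives $H(Z) = H(Z,Y) = H(Y) + H(Z \mid Y)$. Conditioned on $Y=1$ the variable $Z$ is determined (it equals $a$), while conditioned on $Y=0$ it takes values in a set of size $k-1$. Thus $H(Z \mid Y) \leq (1-p_a)\log(k-1)$, and so
\[
H(Y) \;\geq\; H(Z) - (1-p_a)\log(k-1) \;\geq\; \log k - \beta - (1-p_a)\log(k-1).
\]
Rewriting the right-hand side as $\log\frac{k}{k-1} - \beta + p_a\log(k-1)$, and discarding the nonnegative term $p_a \log(k-1)$, we conclude that $H(Y) \geq \log\frac{k}{k-1} - \beta = h$.

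Finally, I invoke the definition of $H^{-1}$: since $Y$ is Bernoulli with $H(\Ber(p_a)) \geq h$, we have $\min(p_a, 1-p_a) \geq H^{-1}(h)$, and in particular $p_a \geq H^{-1}(h)$, as required.

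There is no real obstacle here; the only point to keep in mind is that we want a lower bound on $p_a$ (not on $1-p_a$), which is automatic because $H^{-1}(h) \leq 1/2$ by the definition of $H^{-1}$, so the conclusion $\min(p_a,1-p_a) \geq H^{-1}(h)$ already forces $p_a \geq H^{-1}(h)$. The entire argument is a one-step application of the chain rule plus the trivial bound $H(Z \mid Y=0) \leq \log(k-1)$; this is precisely the kind of ``large entropy implies all outcomes are reasonably likely'' statement that the authors will plug in wherever they need to extract individual probability bounds from the aggregate entropy computations in the proof of \cref{prop:no-approx-hom}.
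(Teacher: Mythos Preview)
Your proof is correct and follows essentially the same approach as the paper's: introduce the indicator of the event $\{Z=a\}$, apply the chain rule $H(Z)=H(Y)+H(Z\mid Y)$, bound $H(Z\mid Y)\le\log(k-1)$, and invert to get $p_a\ge H^{-1}(h)$. The only cosmetic difference is that you retain the factor $(1-p_a)$ in front of $\log(k-1)$ before discarding it, whereas the paper bounds $H(Z\mid Y)\le H(Z\mid Y=0)\le\log(k-1)$ directly.
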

\begin{proof}
	Let $J$ be the indicator random variable of the event $X=a$. 
	We have
	$$
	H(X) = H(X,J) = H(X \mid J) + H(J),
	$$
	where the first equality is because $X$ determines $J$. Now,
	$$
	H(X \mid J) = \mathbb{P}[J = 1] \cdot H(X \mid J = 1) + 
	\mathbb{P}[J = 0] \cdot H(X \mid J = 0).
	$$
	Conditioned on $J=1$, $X$ is a constant random variable, so $H(X \mid J=1) = 0$. Also, $H(X \mid J= \nolinebreak 0) \leq \log(k-1)$ because conditioned on $J=0$, $X$ takes at most $k-1$ values. Combining all of the above, we get
	$$
	\log k - \beta \leq H(X) = H(X \mid J) + H(J) \leq H(X \mid J = 0) + H(J) \leq \log(k-1) + H(J).
	$$
	So $H(J) \geq \log(\frac{k}{k-1})-\beta=h$, which implies the required bound on $\mathbb{P}[X = a] = \mathbb{P}[J=1]$. 
\end{proof}

We now turn to proving \cref{prop:no-approx-hom}. Before doing so, let us sketch the strategy. Fix an $F$-hom-free graph $\Gamma$ with $\abs{\Gamma} < 2^{c m/n}$ (where $c$ will be chosen later) and a map $\phi:V(G^{\star})\rightarrow V(\Gamma)$. We wish to prove that $\phi$ is not an $\varepsilon$-approximate homomorphism, so our goal is to show that more than $\varepsilon \abs{G^{\star}}^2$ edges of $G^{\star}$ are not mapped to edges of $\Gamma$. 
	Recall that $\Delta$ is the maximum degree of $F$, and that we denote the vertices of $G^{\star}$ as $(v,\bx)$, where $v \in V(G)$ and $\bx\in\{1,\dots,\Delta\}^m$.

	For each $v \in V(G)$, let $\bx^v$ be a uniformly random element of $\{1,\dots,\Delta\}^m$, and let $\by^v = \phi(v,\bx^v)$; that is, $\by^v$ is the image under $\phi$ of the vertex $(v,\bx^v)$. 
	We wish to analyse the mutual information $I(\bx^v;\by^v)$ of these two random variables. 
	We will show that our bound on $\abs{\Gamma}$ implies that for all $v\in V(G)$, the mutual information $I(\bx^v;\by^v)$ is at most $O(m/n)$. 
	The number of $F$-copies in $G$ per vertex $v \in V(G)$ is at least $m/n$ on average, so this implies that the average ``mutual information per $F$-copy" in $G$ is at most some constant. 
	Finally, we show that if the ``mutual information of an $F$-copy $F_i$" in $G$ is at most some constant, then the proportion of edges in the blowup of $F_i$ which are not mapped to edges of $\Gamma$ is also some constant. 
	Calculations then show that this constant is larger than $\eps$ if $c$ is sufficiently small.

We now turn to the formal proof.

\begin{proof}[Proof of \cref{prop:no-approx-hom}]
	As discussed above, we fix any $F$-hom-free graph $\Gamma$ with $\abs{\Gamma} < 2^{c m/n}$, for a constant $c>0$ to be chosen later, as well as any map $\phi:V(G^{\star})\rightarrow V(\Gamma)$. For each $v \in V(G)$, let $\bx^v$ be a uniformly random element of $\{1,\dots,\Delta\}^m$, and let $\by^v = \phi(v,\bx^v)$.

	We first use our upper bound on $\abs{\Gamma}$ to obtain an upper bound on $I(\bx^v;\by^v)$ for every $v \in V(G)$. Indeed, we have $I(\bx^v;\by^v) = H(\by^v)- H(\by^v\mid\bx^v) = H(\by^v),$ where the first step is by definition, and the second uses that $H(\by^v\mid\bx^v) = 0$ as $\by^v$ is determined by $\bx^v$.
	   Moreover, $H(\by^v) \leq \log(|\Gamma|) < c \cdot \frac{m}{n}$, because $\by^v$ takes values in $V(\Gamma)$ and $\abs \Gamma \leq 2^{cm/n}$. Hence, for every $v \in V(G)$ we have
		\begin{equation}\label{eq:mutual_info_to_order}
		I(\bx^v;\by^v) < c \cdot \frac{m}{n}.
		\end{equation}

	We now consider the coordinates of $\bx^v$ separately, that is, we consider $I(\bx^v_k;\by^v)$ for $k \in [m]$.
	Using \eqref{eq:mutual information supermodularity}, we get
	\begin{equation}\label{eq:supermodular}
		\sum_{k\in[m]}I(\bx^v_k;\by^v) \leq I(\bx^v;\by^v).
	\end{equation}
	Finally we convert this into a bound on the mutual information {per $F$-copy}. Recall that $G$ has $m$ copies of $F$, denoted $F_1,\dots,F_m$, and each edge of $G$ is in exactly one of these copies.  
	\nolinebreak We \nolinebreak have
	\begin{equation}\label{eq:mutual_information_per_copy}
		\sum_{k\in[m]}\sum_{v\in V(F_k)}I(\bx^v_k;\by^v) \leq 
		\sum_{v\in V(G)}\sum_{k \in [m]}I(\bx^v_k;\by^v) \leq 
		\sum_{v\in V(G)}I(\bx^v;\by^v) < cm,
	\end{equation}
	where the first inequality holds simply because $V(F_k) \subseteq V(G)$ for every $k$; the second uses \eqref{eq:supermodular}; and the third uses \eqref{eq:mutual_info_to_order}. 
	By \eqref{eq:mutual_information_per_copy} and Markov's inequality, at least $\frac{m}{2}$ of the indices $k \in [m]$ satisfy
   \begin{equation}\label{eq:many_F-copies}
		\sum_{v\in V(F_k)}I(\bx^v_k;\by^v) \leq 2c.
	\end{equation}
    
    In what follows, for $k \in [m]$, we denote by $F_k^{\star}$ the subgraph of $G^{\star}$ consisting of the vertices $V(F_k) \times \{1,\dots,\Delta\}^m$ and of the edges $(u,\mathbf{x}),(v,\mathbf{y})$ with $uv \in E(F_k)$ (recall Construction \ref{con:HG}).
    An edge of $G^{\star}$ is {\em bad} if it is not mapped by $\phi$ to an edge of $\Gamma$.
	The remainder of the proof consists in showing that if $F_k$  satisfies \eqref{eq:many_F-copies}, then $F_k^{\star}$ contains many bad edges. More precisely, we prove the following:
	\begin{claim}\label{claim:bad edges}
		For $k \in [m]$, if 
		$\sum_{v\in V(F_k)}I(\bx^v_k;\by^v) \leq 2c$ then $F_k^{\star}$ contains at least $2c \big( \frac{|G^{\star}|}{n} \big)^2$ \nolinebreak bad \nolinebreak edges.
	\end{claim}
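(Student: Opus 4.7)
The plan is to prove the contrapositive: assuming $F_k^{\star}$ has fewer than $2c\Delta^{2m} = 2c(|G^{\star}|/n)^2$ bad edges, we derive a contradiction with the $F$-hom-freeness of $\Gamma$, provided the absolute constant $c$ in the lemma is chosen small enough in terms of $F$. For each $v \in V(F_k)$, let $\mu_v$ denote the marginal distribution of $\by^v$ on $V(\Gamma)$, and let $\nu^v_i$ denote the conditional distribution of $\by^v$ given $\bx^v_k = i$, so that $\mu_v = \tfrac{1}{\Delta}\sum_i \nu^v_i$. For each edge $uv \in E(F_k)$ with labels $i$ at $u$ and $j$ at $v$, set
$$g_{uv} = \sum_{w_u w_v \in E(\Gamma)} \nu^u_i(w_u)\,\nu^v_j(w_v), \qquad \tilde{g}_{uv} = \sum_{w_u w_v \in E(\Gamma)} \mu_u(w_u)\,\mu_v(w_v).$$
Since $F_k^{\star}$ has exactly $\Delta^{2m-2}\sum_{uv\in E(F_k)} g_{uv}$ good edges, the contrapositive hypothesis reads $\sum_{uv \in E(F_k)}(1 - g_{uv}) < 2c\Delta^2$.

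The key structural input is an upper bound on $\sum_{uv} \tilde{g}_{uv}$ coming from $F$-hom-freeness. Sample $(W_v)_{v \in V(F_k)}$ independently with $W_v \sim \mu_v$. Since $\Gamma$ is $F$-hom-free and $F_k \cong F$, the map $v \mapsto W_v$ is never a homomorphism, so $\Pr[\forall\, uv \in E(F_k): W_u W_v \in E(\Gamma)] = 0$. A union bound over the events $\{W_u W_v \notin E(\Gamma)\}$ then yields $\sum_{uv\in E(F_k)}(1 - \tilde{g}_{uv}) \geq 1$, equivalently $\sum_{uv} \tilde{g}_{uv} \leq e(F) - 1$.

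Next, we compare $g_{uv}$ and $\tilde{g}_{uv}$ using the mutual-information hypothesis. A simple telescoping estimate gives $|g_{uv} - \tilde{g}_{uv}| \leq \|\nu^u_i - \mu_u\|_1 + \|\nu^v_j - \mu_v\|_1$; regrouping the sum over edges by vertex gives $\sum_{uv}|g_{uv} - \tilde{g}_{uv}| \leq \sum_{v \in V(F_k)} \sum_{i=1}^{\Delta} \|\nu^v_i - \mu_v\|_1$. Since $I(\bx^v_k;\by^v) = \tfrac{1}{\Delta}\sum_i D(\nu^v_i \,\|\, \mu_v)$, Pinsker's inequality gives $\sum_i \|\nu^v_i - \mu_v\|_1^2 = O(\Delta \cdot I(\bx^v_k;\by^v))$. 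Applying Cauchy--Schwarz first inside each vertex sum (over $i$) and then across vertices (using the hypothesis $\sum_v I(\bx^v_k;\by^v) \leq 2c$) yields
$$\sum_{uv \in E(F_k)} |g_{uv} - \tilde{g}_{uv}| \;\leq\; O\!\left(\Delta\sqrt{|V(F)|\cdot {\textstyle\sum_v I(\bx^v_k;\by^v)}}\right) \;=\; O\!\left(\Delta\sqrt{c\,|V(F)|}\right).$$

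Combining the two estimates yields $\sum_{uv \in E(F_k)}(1 - g_{uv}) \geq 1 - O(\Delta\sqrt{c\,|V(F)|}) \geq \tfrac{1}{2}$ once $c$ is small enough in terms of $F$, so $F_k^{\star}$ contains at least $\Delta^{2m-2}/2$ bad edges, which exceeds $2c\Delta^{2m}$ as soon as $c \leq 1/(4\Delta^2)$. Both requirements on $c$ depend only on $F$ and will be enforced by choosing the absolute constant $c$ in the lemma's statement small enough. The main obstacle is the quantitative Step 2: one must translate the aggregate entropy bound $\sum_v I(\bx^v_k;\by^v) \leq 2c$ into a total-variation control over all $|V(F_k)|\cdot\Delta$ conditional distributions simultaneously, losing only multiplicative factors depending on $F$. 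Step 1, though a single line once set up, is the conceptual heart of the argument, since it is where $F$-hom-freeness enters via a clean union bound on a random map sampled from the marginal distributions.
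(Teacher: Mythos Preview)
Your proof is correct, but it follows a genuinely different route from the paper's. The paper conditions on the realisations $y_i$ of the $\by^{v_i}$, uses Markov's inequality to restrict to a set $R_i$ of ``good'' values $y_i$ where the conditional entropy $H(\bx^{v_i}_k \mid \by^{v_i}=y_i)$ is close to $\log\Delta$, and then for each tuple $(y_1,\dots,y_f)$ with $y_i\in R_i$ invokes $F$-hom-freeness to locate a specific non-edge of $\Gamma$ and uses an entropy-to-probability lemma (\cref{lem:large entropy implies large probability of all outcomes}) to lower-bound the bad-edge density in that fibre. You instead work directly with the marginals $\mu_v$: you apply $F$-hom-freeness \emph{once}, to the random map $v\mapsto W_v$ with $W_v\sim\mu_v$, obtaining $\sum_{uv}(1-\tilde g_{uv})\geq 1$ by the union bound, and then pass from the $\tilde g_{uv}$ to the $g_{uv}$ using Pinsker's inequality and Cauchy--Schwarz. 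Your approach bypasses both the Markov step and the auxiliary \cref{lem:large entropy implies large probability of all outcomes}, at the cost of importing Pinsker; the paper's approach is more explicit about where the bad edges come from (one bad pair of parts for each realisation of $(y_1,\dots,y_f)$), while yours packages the whole argument into a total-variation perturbation of a single probabilistic statement.
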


	Let us first complete the proof of the lemma using \cref{claim:bad edges}. As we saw above, at least $\frac{m}{2}$ of the indices $k \in [m]$ satisfy the condition of \cref{claim:bad edges}. For each such $k$, there are at least $2c \big( \frac{|G^{\star}|}{n} \big)^2$ bad edges in $F_k^{\star}$. 
	Since the $F$-copies in $G$ are edge-disjoint, these bad edges are different for different $F$-copies.
	Thus, in total, $G^{\star}$ has at least 
    $\frac{cm}{n^2}|G^{\star}|^2$ bad edges.
	So if $\varepsilon < \frac{cm}{n^2}$, then there are more than $\varepsilon |G^{\star}|^2$ bad edges,
	as required.

	From now on our goal is to prove \cref{claim:bad edges}. 
	So fix any $k \in [m]$ and let $v_1,\dots,v_f$ be the vertices of $F_k$.
	We assume that $V(F) = [f]$ and that $i \mapsto v_i$ is an isomorphism from $F$ to $F_k$. 
	Let $V_1,\dots,V_f$ be the blowup-sets in $G^{\star}$ corresponding to $v_1,\dots,v_f$, i.e., $V_i = \{v_i\} \times \{1,\dots,\Delta\}^m$. 
	To simplify the notation, let us set $X_i = \bx^{v_i}_k$ and $Y_i = \by^{v_i}$ for $1 \leq i \leq f$. 
	Thus $X_i$ is a uniformly random element of $\{1,\dots,\Delta\}$ and $Y_i = \phi(v_i,\bx^{v_i})$ is a vertex of $\Gamma$. With this notation, the assumption of \cref{claim:bad edges} is that
	$$
	\sum_{i = 1}^f I(X_i;Y_i) \leq 2c.
	$$
	Since mutual information is nonnegative, it follows that $I(X_i;Y_i) \leq 2c$ for every $1 \leq i \leq f$. 
	Also, by the definition of mutual information, we have
	\begin{equation}\label{eq:H(X_i,Y_i)}
	H(X_i \mid Y_i) = H(X_i) - I(X_i;Y_i) = \log(\Delta) - I(X_i;Y_i) \geq \log(\Delta) - 2c.
	\end{equation}  
	Now, recall that by definition, $H(X_i\mid Y_i) = \mathbb{E}_{y_i}\big[ H(X_i\mid Y_i = y_i) \big]$. 
	Since $H(X_i\mid Y_i = y_i) \leq \log(\Delta)$ for every $y_i$, we get by \eqref{eq:H(X_i,Y_i)} and Markov's inequality that 
	$$
	\mathbb{P}_{y_i}\Big[ H(X_i\mid Y_i = y_i) < \log(\Delta) - 4fc\Big] \leq \frac{1}{2f}.
	$$
	Hence, there is a set $R_i \subseteq V(\Gamma)$ with $\mathbb{P}[Y_i \in R_i] \geq 1 - \frac{1}{2f}$ and $H(X_i\mid Y_i = y_i) \geq \log(\Delta) - 4fc$ for every $y_i \in R_i$. By the union bound, the probability that $Y_i \in R_i$ for every $1 \leq i \leq f$ is \nolinebreak at \nolinebreak least \nolinebreak $\frac{1}{2}$.

	For $1 \leq i \leq f$ and $y_i \in V(\Gamma)$, define 
	$U_i^{y_i} \coloneqq \phi^{-1}(y_i)\cap V_i$. 
	Also, for $1 \leq i < j \leq f$ and $y_i,y_j \in V(\Gamma)$, define $\delta_{i,j}^{y_i,y_j}$ to be the proportion of the pairs in $U_i^{y_i} \times U_j^{y_j}$ which are bad edges.
	\begin{claim}
	\label{claim:one triple}
		For every (not necessarily distinct) $y_1,\dots,y_f$ with $y_i \in R_i$ ($i = 1,\dots,f$), there exists an edge $ij \in E(F)$ such that $\delta_{i,j}^{y_i,y_j} \geq H^{-1}(h)^2$, where $h \coloneqq \log(\frac{\Delta}{\Delta-1}) - 4fc$. 
	\end{claim}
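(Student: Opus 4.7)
My plan is to argue by contradiction: suppose that $\delta_{i,j}^{y_i,y_j} < H^{-1}(h)^2$ for every edge $ij \in E(F)$, and derive the contradiction that $i \mapsto y_i$ defines a homomorphism $F \to \Gamma$, which is forbidden since $\Gamma$ is $F$-hom-free.

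The first step is to turn the entropy bound $H(X_i \mid Y_i = y_i) \geq \log\Delta - 4fc$, guaranteed by $y_i \in R_i$, into a structural statement about $U_i^{y_i}$. I would partition $U_i^{y_i}$ into $\Delta$ pieces $U_i^{y_i,a} \coloneqq \{(v_i,\mathbf{x}) \in U_i^{y_i} : \mathbf{x}_k = a\}$ for $a \in \{1,\dots,\Delta\}$, so that $|U_i^{y_i,a}|/|U_i^{y_i}| = \Prob[X_i = a \mid Y_i = y_i]$. Applying \cref{lem:large entropy implies large probability of all outcomes} to the conditional distribution of $X_i$ given $Y_i = y_i$ (with $k=\Delta$ and $\beta = 4fc$) then immediately gives $|U_i^{y_i,a}| \geq H^{-1}(h)\cdot|U_i^{y_i}|$ for every $a$.

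The second step is to identify the edges of $G^{\star}$ between $U_i^{y_i}$ and $U_j^{y_j}$ for an edge $ij \in E(F)$. Since every edge of $G$ lies in a unique copy of $F$, the only $F$-copy of $G$ contributing edges between $V_i$ and $V_j$ is $F_k$ itself. Letting $(a_i,a_j)$ be the labels of the edge $v_iv_j \in E(F_k)$, \cref{con:HG} tells us that the edges of $G^{\star}$ between $V_i$ and $V_j$ form exactly the complete bipartite graph between $\{(v_i,\mathbf{x}):\mathbf{x}_k=a_i\}$ and $\{(v_j,\mathbf{y}):\mathbf{y}_k=a_j\}$. Restricted to $U_i^{y_i}\times U_j^{y_j}$, this is the complete bipartite graph between $U_i^{y_i,a_i}$ and $U_j^{y_j,a_j}$.

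Combining these two steps closes the argument. If $y_iy_j \notin E(\Gamma)$---which includes the case $y_i=y_j$, since $\Gamma$ is a simple graph---then every edge of $G^{\star}$ between $U_i^{y_i}$ and $U_j^{y_j}$ is bad, so
\[
\delta_{i,j}^{y_i,y_j} \;=\; \frac{|U_i^{y_i,a_i}|\cdot|U_j^{y_j,a_j}|}{|U_i^{y_i}|\cdot|U_j^{y_j}|} \;\geq\; H^{-1}(h)^2,
\]
contradicting the assumption. Hence $y_iy_j \in E(\Gamma)$ for every $ij\in E(F)$, so $i\mapsto y_i$ is a homomorphism $F\to\Gamma$, contradicting $F$-hom-freeness of $\Gamma$. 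I do not expect any serious obstacle: the crucial conceptual point is the orthogonality feature built into \cref{con:HG}, namely that within each $F$-copy the blowup sets are partitioned into $\Delta$ classes (one per edge-label at each vertex), with each class meeting at most one complete bipartite subgraph per incident edge of $F$. This is precisely what lets us convert the entropy lower bound on $X_i \mid Y_i$ directly into a density lower bound on $\delta_{i,j}^{y_i,y_j}$, with no intermediate estimates required.
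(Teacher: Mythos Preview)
Your proposal is correct and essentially identical to the paper's proof; the only cosmetic difference is that the paper argues directly (first invoking $F$-hom-freeness of $\Gamma$ to find an edge $ij\in E(F)$ with $y_iy_j\notin E(\Gamma)$, then computing $\delta_{i,j}^{y_i,y_j}$), whereas you phrase the same steps as a contrapositive. The substantive ingredients---applying \cref{lem:large entropy implies large probability of all outcomes} to the conditional distribution of $X_i$ given $Y_i=y_i$, and reading off from \cref{con:HG} that the $G^\star$-edges between $V_i$ and $V_j$ form a complete bipartite graph on the label classes---are exactly as in the paper.
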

	\begin{proof}
		As $\Gamma$ is $F$-hom-free, 
		there is an edge $ij \in E(F)$ such that $y_iy_j \notin E(\Gamma)$ (otherwise the map $i \mapsto y_i$ would be a homomorphism from $F$ to $\Gamma$).
		Without loss of generality suppose that $i=1,j=2$, so that $v_1v_2$ is an edge of $F_k$ and $y_1y_2 \notin E(\Gamma)$.
		Recall that each edge $uv$ of $F_k$ receives two labels of the form $(u,a),(v,b)$ (see \cref{con:HG}). So let $(1,a), (2,b)$ be the labels of the edge $v_1v_2$.

		Let us denote by $X'_i$ the random variable $X_i$ conditioned on $Y_i = y_i$.
		As $X_1,X_2$ are independent, so are $X'_1,X'_2$.
		Also, for each $\ell \in \{1,\dots,\Delta\}$, $X'_i$ takes value $\ell$ with probability 
		$$
		\frac{\abs{\{(v_i,\bx) \in U_i^{y_i} : \bx_k = \ell\}}}{\abs{U_i^{y_i}}} \; .$$ 
		Since $y_i \in R_i$, we have $H(X'_i) \geq \log(\Delta) - \beta$ for $i=1,2$, where $\beta \coloneqq 4fc$. By \cref{lem:large entropy implies large probability of all outcomes}, this implies that 
		$\Prob [ X_1'=a,X'_2=b] = \Prob[X'_1=a] \cdot \Prob[X'_2=b] \geq H^{-1} (h)^2$ where $h = \log(\frac{\Delta}{\Delta-1}) - \beta$.
		But by the definition of $G^{\star}$, if $X_1=a$ and $X_2=b$ then the vertices $(v_1,\bx^{v_1})$ and $(v_2,\bx^{v_2})$ are adjacent in $G^{\star}$. 
		It follows that at least an $H^{-1}(h)^2$-fraction of the pairs in $U_1^{y_1} \times U_2^{y_2}$ are adjacent and mapped to the non-edge $y_1y_2$ of $\Gamma$. This proves the claim.
	\end{proof}

	By \cref{claim:one triple}, we have
	\begin{equation}\label{eq:expected fraction of bad edges}
	\mathbb{E}_{y_1,\dots,y_f} 
	\left[ \sum_{ij \in E(F)} \delta_{i,j}^{y_i,y_j} \right] \geq 
	\mathbb{P}[ Y_i \in R_i\text{ for all }i] \cdot H^{-1}(h)^2 \geq 
	\frac{1}{2}H^{-1}(h)^2.
	\end{equation}
	On the other hand,
	\begin{equation}\label{eq:total expectation}
	\mathbb{E}_{y_1,\dots,y_f}\left[ \delta_{i,j}^{y_i,y_j} \right] = \mathbb{E}_{y_i,y_j}\left[ \delta_{i,j}^{y_i,y_j} \right],
	\end{equation}
	which equals the proportion of pairs in $V_i \times V_j$ which are bad edges. 
	Note that $|V_1| = \dots = |V_f| = \frac{|G^{\star}|}{n}$. 
	Combining \eqref{eq:total expectation} with \eqref{eq:expected fraction of bad edges} and using linearity of expectation, 
	we see that $F_k^{\star}$ contains at least 
	$c'( \frac{|G^{\star}|}{n} )^2$ bad edges, where
	$$c' \coloneqq \frac{1}{2}H^{-1}(h)^2 = \frac{1}{2}H^{-1}\left( \log\left(\frac{\Delta}{\Delta-1}\right) - 4fc\right)^2.$$
	Finally, observe that if $c$ is sufficiently small with respect to $F$, then $c' > 2c$. Indeed, $c'$ tends to the positive constant $\frac 12 H^{-1}(\log(\frac{\Delta}{\Delta-1}))$ as $c\to 0$, whereas $2c \to 0$ as $c \to 0$, implying that indeed $c'>2c$ for $c$ sufficiently small.
	This completes the proof of \cref{claim:bad edges} and hence of the lemma. 
	\end{proof}

\section{Approximate homomorphisms: lower bounds}\label{sec:approx hom hard}
\subsection{Proof of Theorem \ref{thm:approx hom hardness}}

	We need a well-known fact about hypergraphs of large girth. Recall that a {\em Berge cycle} in a hypergraph is a sequence of distinct vertices $v_1,\dots,v_k$ and distinct edges $e_1,\dots,e_k$ ($k \geq 2$), such that $v_i,v_{i+1} \in e_i$ for every $1 \leq i \leq k$, with indices taken modulo $k$. A hypergraph is a {\em hyperforest} if it can be obtained from an empty hypergraph by repeatedly adding an edge which intersects the current hypergraph in at most one vertex, with all other vertices being new. The following fact is well-known, but we include a proof in \cref{sec:F-tree} \nolinebreak for \nolinebreak completeness.

	\begin{lemma}\label{lem:hyperforest}
		A hypergraph $\mathcal{G}$ is a hyperforest if and only if it has no Berge cycles. 
	\end{lemma}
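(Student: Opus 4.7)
The plan is to route the equivalence through the incidence bipartite graph $B = B(\mathcal{G})$, whose vertex classes are $V(\mathcal{G})$ and $E(\mathcal{G})$, with $v \in V(\mathcal{G})$ adjacent to $e \in E(\mathcal{G})$ in $B$ whenever $v \in e$. The proof then reduces to two observations that together imply the lemma, via the standard fact that a graph is a forest iff it contains no cycle.

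First I would observe that Berge cycles in $\mathcal{G}$ correspond bijectively to cycles in $B$. Given a Berge cycle $v_1, e_1, v_2, e_2, \ldots, v_k, e_k$ (indices mod $k$), the alternating sequence $v_1, e_1, v_2, e_2, \ldots, v_k, e_k, v_1$ is a $2k$-cycle in $B$, since the $v_i$ are distinct, the $e_i$ are distinct, and $v_i, v_{i+1} \in e_i$ by hypothesis. Conversely, since $B$ is bipartite, every cycle in $B$ has even length $2k$ with $k \geq 2$, and reading off this alternating sequence produces a Berge cycle (the smallest case $k=2$ corresponds to a $4$-cycle in $B$, i.e.\ two distinct vertices contained in two distinct common edges).

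Second I would show that $\mathcal{G}$ is a hyperforest iff $B$ is a forest, by induction on $\abs{E(\mathcal{G})}$. The empty hypergraph gives the base case. For the forward direction, each hyperforest step (adding an edge $e$ that meets the current hypergraph in at most one vertex) corresponds in $B$ to introducing $e$ as a new node, attaching all genuinely new vertices of $e$ as pendant leaves under $e$, and connecting $e$ to at most one already-present vertex-vertex. This attaches a tree to $B$ and therefore preserves the forest property. For the backward direction, if $B$ is a forest I would root each component at an arbitrary edge-vertex and select a deepest edge-vertex $e$; then the vertex-vertex children of $e$ are leaves of $B$, so these vertices lie only in $e$, and at most one vertex of $e$ (its parent in the rooted tree, if any) is shared with another edge. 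Hence $e$ is peelable, and removing it from $\mathcal{G}$ corresponds in $B$ to deleting the edge-vertex $e$ together with its leaf-children, which keeps $B$ a forest; applying induction finishes.

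Chaining these two observations yields: $\mathcal{G}$ is a hyperforest $\iff B$ is a forest $\iff B$ is acyclic $\iff \mathcal{G}$ has no Berge cycles. I do not anticipate any substantive obstacle: the only care required is to handle the smallest Berge cycles ($k=2$) correctly in the correspondence and to verify that peeling an edge of $\mathcal{G}$ in the induction step commutes with deleting an edge-vertex and its leaves in $B$, both of which amount to direct bookkeeping.
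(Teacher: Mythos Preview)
Your proposal is correct and takes essentially the same approach as the paper: both pass to the incidence bipartite graph $B$, use the correspondence between Berge cycles in $\mathcal{G}$ and cycles in $B$, and then peel off a hyperedge whose edge-vertex in $B$ has at most one non-leaf neighbour. The only cosmetic difference is that you locate the peelable edge by rooting a component and taking a deepest edge-vertex, whereas the paper finds it via a longest-path argument; both are standard and equally valid.
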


	The following lemma follows from a standard probabilistic deletion argument. Again, for completeness, we include a proof in \cref{sec:F-tree}. 
	\begin{lemma}\label{lem:high-girth hypergraph}
		For every $f,g \geq 2$ and every $n\geq f$, there is an $n$-vertex $f$-uniform, $f$-partite hypergraph with $\Omega(n^{1+1/g})$ edges and no Berge cycle of length at most $g$.
	\end{lemma}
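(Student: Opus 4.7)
My plan is to apply the standard probabilistic deletion method in the spirit of the Erdős high-girth construction, adapted to the $f$-partite $f$-uniform setting. Partition a vertex set of size $n$ into $f$ parts $V_1,\dots,V_f$ of size $n/f$ each, and consider the random hypergraph obtained by including each potential $f$-partite edge (i.e., each transversal of $V_1,\dots,V_f$) independently with probability $p$, where I will take
\[
p = c \cdot n^{-(f-1) + 1/g}
\]
for a sufficiently small constant $c = c(f,g) > 0$. The expected number of edges is then $p(n/f)^f = \Theta(n^{1+1/g})$.

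The next step is to bound the expected number of short Berge cycles. For each $2 \le k \le g$, I count ordered Berge cycles of length $k$ in the complete $f$-partite $f$-uniform hypergraph as follows: there are at most $n^k$ choices for the vertex sequence $v_1,\dots,v_k$, and once these are fixed, each edge $e_i$ must contain the pair $v_i,v_{i+1}$ (which must therefore lie in two distinct parts), so $e_i$ is determined by choosing its remaining $f-2$ coordinates, giving at most $(n/f)^{f-2}$ options. Thus the number of ordered Berge $k$-cycles is at most $n^{k(f-1)} / f^{k(f-2)}$, and multiplying by $p^k$ gives expected count $O((n^{f-1} p)^k) = O(n^{k/g})$. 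Summing over $2 \le k \le g$, the total expected number of Berge cycles of length at most $g$ is $O(n)$, which is $o(n^{1+1/g})$.

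Choosing $c$ small enough that this total is bounded by half the expected number of edges, standard concentration (or even just Markov for the Berge cycles together with a lower-tail estimate for the edge count, or simply linearity of expectation applied to the random variable $|E| - \#\{\text{short Berge cycles}\}$) yields a realisation of the random hypergraph with at least $\Omega(n^{1+1/g})$ edges and at most half that many short Berge cycles. I then delete one edge from each short Berge cycle; this destroys all Berge cycles of length at most $g$, since by \cref{lem:hyperforest} this property is purely a forbidden-substructure condition on the edge set, and the resulting hypergraph retains $\Omega(n^{1+1/g})$ edges and is $f$-partite and $f$-uniform by construction.

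The argument is entirely routine; the only mildly delicate step is getting the counting of Berge cycles right in the $f$-partite setting (in particular, the exponent $k(f-1)$ rather than $kf$, which is what makes the parameter $p$ come out correctly). For very small $n$, where the asymptotics $\Omega(n^{1+1/g})$ absorb constants, one can handle the boundary cases by taking, e.g., a single edge or an empty hypergraph, so I need only verify the probabilistic bound for $n$ larger than a constant depending on $f$ and $g$.
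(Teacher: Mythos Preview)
Your proposal is correct and follows essentially the same approach as the paper: random $f$-partite $f$-uniform hypergraph with edge probability $p = c\,n^{-(f-1)+1/g}$, bound the expected number of Berge $k$-cycles by $n^{k(f-1)}p^k = O(n^{k/g})$, and delete one edge per short cycle. The invocation of \cref{lem:hyperforest} is unnecessary (deleting an edge from every short Berge cycle trivially destroys them all), but the argument is otherwise identical to the paper's.
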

	\noindent
	With these preliminaries as well as \cref{prop:no-approx-hom}, we are ready to prove \cref{thm:approx hom hardness}.
	\begin{proof}[Proof of \cref{thm:approx hom hardness}]
	Set $f \coloneqq \abs F$, $h \coloneqq \abs H$.
	Let $\mathcal{G}$ be the hypergraph given by \cref{lem:high-girth hypergraph} with parameters $g \coloneqq \max(f,h)$ and $n \coloneqq c_0\cdot (\frac{1}{\varepsilon})^{\frac{1}{1-1/g}}$, where $c_0 > 0$ is a small enough constant to be chosen later. 
	{Let $G$ be the graph obtained from $\mathcal{G}$ by identifying the parts of $\mathcal{G}$ with the vertices of $F$, and accordingly placing a copy of $F$ on each hyperedge of $\mathcal{G}$, so that $G$ is a subgraph of a blowup of $F$.} 
	Letting $m$ denote the number of $F$-copies in $G$, we have $m \geq e(\mathcal G)= \Omega(n^{1 + 1/g})$. 
	Let $c = c(F)$ be the constant given by \cref{prop:no-approx-hom}.
	Note that 
	$$
	\frac{cm}{n^2} = \frac{\Omega(n^{1+1/g})}{n^2} = \Omega(n^{-1+1/g}) = \Omega( c_0^{-1+1/g} \cdot \varepsilon ) > \varepsilon,
	$$
	provided that $c_0$ is small enough. 
    Also, by definition, every edge of $G$ is on a copy of $F$.

    Consider any set $X \subseteq V(G) = V(\mathcal{G})$ with $|X| \leq g$, and let $e_1,\dots,e_s$ be the hyperedges $e \in E(\mathcal{G})$ satisfying $|e \cap X| \geq 2$.  Consider the hypergraph 
    $\mathcal{G}_X = \{e_i \cap X : 1 \leq i \leq s\}$. Since $|X| \leq g$ and $\mathcal{G}$ has no Berge cycle of length at most $g$, we get that $\mathcal{G}_X$ has no Berge cycles. Therefore, $\mathcal{G}_X$ is a hyperforest by \cref{lem:hyperforest}. 
    In the case $|X| = f$, the graph $G[X]$ can contain a copy of $F$ only if $\mathcal{G}_X$ has a hyperedge of size $f$, because $F$ is 2-connected.
    Thus, the only $F$-copies in $G$ are those corresponding to hyperedges of $\mathcal{G}$. 
    As $\mathcal{G}$ has no Berge 2-cycles, it follows that every edge of $G$ is on a unique copy of $F$. This in turn allows us to apply \cref{con:HG}. 
    
	So let $G^{\star}$ be the graph given by \cref{con:HG}. 
	By \cref{prop:no-approx-hom}, there is no $\varepsilon$-approximate homomorphism from $G^{\star}$ to any $F$-hom-free graph on at most $2^{cm/n}$ vertices. Note that $2^{cm/n} = 2^{\Omega(n^{1/g})} \geq 2^{(1/\varepsilon)^{1/g}}$ (provided that $\varepsilon$ is small enough). 

	It remains to show that $G^{\star}$ is $H$-hom-free. So suppose for contradiction that there is a homomorphism $\psi:H \to G^\star$. 
	For a vertex $u \in V(G)$, let $V_u \coloneqq \{u\} \times \{1,\dots,\Delta\}^m$ denote the blowup set of $u$ in $G^{\star}$ (recall \cref{con:HG}).  
	Let $X$ be the set of all $u \in V(G)$ such that $V_u \cap \psi(V(H)) \neq \emptyset$; so $\abs X \leq \abs H = h \leq g$. 
    Let $F_1,\dots,F_m$ be an enumeration of the $F$-copies in $G$. We consider the $F$-copies in $G$ which intersect $X$ in at least 2 vertices; without loss of generality, these are 
   $F_1,\dots,F_s$. As we saw above, the hypergraph 
    $\{V(F_i) \cap X : 1 \leq i \leq s\}$ is a hyperforest. 
    Let $T$ be the $F$-forest defined as follows: $T$ has $s$ copies of $F$, denoted $F'_1,\dots,F'_s$; $V(T)$ consists of the set $X$ and the sets $V(F'_i) \setminus X$ (for $i = 1,\dots,s$), which are pairwise-disjoint; and $T[V(F'_i) \cap X] = G[V(F_i) \cap X]$ as labelled graphs.\footnote{Namely, the sets $V(F_i)\setminus X$ may intersect (in $G$), but we make these sets disjoint in $T$ to make sure that $T$ is an $F$-forest. We could avoid this technicality by increasing $g$ to $\binom{h}{2}(f-2)$, which would ensure that $G[\bigcup_{i=1}^s V(F_i)]$ is an $F$-forest.}
    Then $T$ is homomorphic to $G$ via the homomorphism which maps $F'_i$ isomorphically to $F_i$ for every $i \in [s]$. As $G$ is homomorphic to $F$, we get that $T$ is homomorphic to $F$.
    
	Let $T^{\star}$ be the graph obtained by applying \cref{con:HG} to $T$. For a vertex $(v,\bx) \in V(G^{\star})$, let $\phi(v,\bx)$ be the result of projecting $\bx$ on the first $s$ coordinates, i.e., if $\bx = (\bx_1,\dots,\bx_m)$, then $\phi(v,\bx) = (v,(\bx_1,\dots,\bx_s))$. If $\mathbf{v} \in \psi(V(H))$ then $v \in X$, so $\phi(v,\bx) \in V(T^{\star})$. We claim that $(\phi \circ \psi) : V(H) \rightarrow V(T^{\star})$ is a homomorphism. Indeed, let $u_0,v_0$ be adjacent vertices in $H$, and let $(u,\bx)=\psi(u_0)$ and $(v,\by) = \psi(v_0)$. These two vertices are adjacent in $G^\star$ since $\psi$ is a homomorphism. Hence, $uv \in E(G)$ by the definition of $G^{\star}$. Also, $u,v \in X$, so there exists 
    $1 \leq k \leq s$ such that $uv \in E(F_k)$. Let $(u,i),(v,j)$ be the labels of the edge $uv$ in $F_k$. By the definition of $G^{\star}$, the fact that $(u,\bx)$ and $(v,\by)$ are adjacent means that $\bx_k = i$ and $\by_k = j$. But $T^{\star}$ has the same adjacency criterion, meaning that $\phi(u,\bx)$ and $\phi(v,\by)$ are adjacent in $T^{\star}$. Since this holds for arbitrary $u_0 v_0 \in E(H)$, we conclude that $\phi \circ \psi$ is indeed a homomorphism $H \to T^\star$. 
    Thus, $H$ is homomorphic to $T^\star$ for an $F$-forest $T$ with $T \rightarrow F$.
    This contradicts the assumption of the theorem and concludes the proof.
	\end{proof}

	We remark that by blowing up the graph $G^\star$ constructed in the above proof, we can obtain arbitrarily large $H$-hom-free graphs which do not have an $\e$-approximate homomorphism to an $F$-hom-free graph on at most $2^{(1/\e)^c}$ vertices.
	\begin{proof}[Proof of \cref{cor:C5 C7}]
	By \cref{thm:approx hom hardness}, it is enough to prove that $C_5,C_7$ are not homomorphic to $T^{\star}$ for any $K_3$-forest $T$. 
    So fix a $K_3$-forest $T$, and let us show that $T^{\star}$ has no odd cycles of length at most $7$. This suffices because the homomorphic image of an odd cycle must contain an odd cycle. In this proof, we will use the notation introduced in \cref{con:HG}.
    Let $F_1,\dots,F_m$ be an enumeration of the triangles in $T$.
	As before, for $u \in V(T)$, we denote by $V_u := \{u\} \times \{1,2\}^m$ the corresponding blowup-set in $T^{\star}$. Let $\phi : V(T^{\star}) \rightarrow V(T)$ be the homomorphism sending $V_u$ to $u$ for every $u \in V(T)$. Let $C$ be an odd cycle in $T^{\star}$. 
	The image $\phi(C)$ must contain an odd cycle, and this cycle must be a triangle because the only odd cycles in a $K_3$-forest are triangles. So there are three vertices of $C$ which are mapped by $\phi$ to a triangle $u,v,w$ of $T$. Let us denote these three vertices by $\mathbf{u} = (u,\bx),\mathbf{v} = (v,\by),\mathbf{w} = (w,\bz)$. Without loss of generality, $F_1 = (u,v,w)$. Observe that no two vertices among $\mathbf{u},\mathbf{v},\mathbf{w}$ have a path of length 2 
	(in $T^{\star}$) between them. Indeed, if such a path goes outside $V_u \cup V_v \cup V_w$ then it certainly has length more than 2, and there is no such path inside $V_u \cup V_v \cup V_w$ (see \cref{fig:construction}).
	Also, we claim that if (say) $\mathbf{u},\mathbf{v}$ are adjacent, then the distance of $\mathbf{w}$ to each of $\mathbf{u},\mathbf{v}$ is at least $3$, and moreover, the distance of $\mathbf{w}$ to one of these vertices is at least $4$.  To see this, suppose (without loss of generality) that the edge $uv$ gets labels $(u,1),(v,2)$, the edge $vw$ gets labels $(v,1),(w,2)$, and the edge $wu$ gets labels $(w,1),(u,2)$. Then $\bx_1 = 1$, $\by_1 = 2$. Now, $\mathbf{w}$ is not adjacent to $\mathbf{u}$ or $\mathbf{v}$ (because this would require $\bx_1 = 2$ or $\by_1 = 1$, respectively), and we already saw that there is no path of length two from $\mathbf{w}$ to $\mathbf{u}$ or $\mathbf{v}$, so $\mathbf{w}$'s distance to $\mathbf{u}$ and $\mathbf{v}$ is at least $3$. Now suppose without loss of generality that $\bz_1 = 1$. We claim that $\text{dist}(\mathbf{v},\mathbf{w}) \geq 4$. Letting $V_u^2 \coloneqq \{(u,\bx') : \bx'_1 = 2\}$ and $V_v^1 \coloneqq \{ (v,\by') : \by'_1 = 1 \}$,
	observe that the set $S \coloneqq V_u^2 \cup V_v^1$ separates $\mathbf{w}$ from $\mathbf{v}$, because $S$ contains all vertices in $V_u \cup V_v$ which have a neighbour in $V_w$. Also, every vertex in $V_v^1$ is at distance at least $3$ from $\mathbf{w}$ and at least $2$ from $\mathbf{v}$, and every vertex in $V_u^2$ is at distance at least $1$ from $\mathbf{w}$ and at least $3$ from $\mathbf{v}$. It follows that $\text{dist}(\mathbf{v},\mathbf{w}) \geq 4$.

	Summarising, either every pair among $\mathbf{u},\mathbf{v},\mathbf{w}$ is at distance at least $3$, or two of these vertices are adjacent and the last vertex is at distance at least $3$ from both of them and at least $4$ from one of them. In either case $\abs C \geq 9$ or $\abs C \geq 1+3+4 = 8$, so $\abs C > 7$.
	\end{proof}

\subsection{A polynomial lower bound}\label{sec:poly lower bound}
In this section, we complete the picture in \cref{thm:odd cycles} by proving the following simple lower bound on $M_{F,H}(\varepsilon)$.
\begin{proposition}\label{prop:poly LB}
	Let $F,H$ be graphs with $H \to F$ such that $H$ is not bipartite. We have $M_{F,H}(\varepsilon) \geq \Omega((\frac{1}{\varepsilon})^{1/2+\alpha})$, where $\alpha > 0$ depends only on $F,H$.
\end{proposition}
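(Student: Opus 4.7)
The plan is to construct an explicit $H$-hom-free graph $G$ whose edges are structured enough to defeat every small $F$-hom-free target. The argument combines a size-chosen random/algebraic construction with a chromatic-robustness (or pseudorandom-discrepancy) bound.

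\textbf{Step 1: Construction of $G$.}
Since $H$ is non-bipartite, fix $r := \chi(H) \ge 3$ and the odd girth $g$ of $H$. For a parameter $n = n(\varepsilon)$ to be chosen, I would take $G$ to be an $n$-vertex graph that is $H$-hom-free and has one of the following ``rigidity'' features:
\begin{enumerate}
\item[(a)] $\chi(G) \ge c\, n^{\beta}$ for some $\beta=\beta(H)>0$ (e.g.\ from Erd\H{o}s's high-girth high-chromatic construction, or a Kneser-type explicit graph of odd girth exceeding that of $H$); or
\item[(b)] pseudorandom density $p = \Theta(\varepsilon)$, obtained from $\mathcal G(n,p)$ after deleting one edge per homomorphic copy of $H$ (the number of such copies is negligible as $p$ stays below the $H$-hom threshold).
\end{enumerate}

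\textbf{Step 2: Turning an approximate homomorphism into structural constraints.}
Suppose $\phi\colon V(G)\to V(\Gamma)$ is an $\varepsilon$-approximate homomorphism to an $F$-hom-free $\Gamma$ with $|\Gamma|=M$. Let $G'\subseteq G$ be the subgraph consisting of those edges sent by $\phi$ to edges of $\Gamma$, so that $G'\to \Gamma$ exactly and $e(G)\setminus e(G')\le \varepsilon n^2$. In the chromatic-robustness version I would argue
\[
M \ge \chi(\Gamma)\ge \chi(G') \ge \chi(G) - O\bigl(\sqrt{\varepsilon\, n^{2}}\bigr)
\]
using the classical fact that deleting $m$ edges decreases $\chi$ by at most $O(\sqrt m)$. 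In the pseudorandom version, one instead uses the Erd\H{o}s--Stone--Simonovits-type bound $e(\Gamma)\le (1-1/(r-1))\binom{M}{2}$ combined with discrepancy estimates for $G$ on the induced partition $V_1\sqcup\dots\sqcup V_M$: the number of edges of $G$ that fall inside parts or into non-$\Gamma$-edge pairs is at least $\tfrac{p n^2}{2(\chi(F)-1)}$, and this must be $\le \varepsilon n^2$.

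\textbf{Step 3: Optimising parameters.}
I would then choose $n$ to balance the two competing quantities. In version (a), setting $c n^{\beta} = 2C\sqrt{\varepsilon}\, n$ gives $n=\Theta(\varepsilon^{-1/(2(1-\beta))})$ and
\[
M \;\ge\; \tfrac{c}{2}\, n^{\beta} \;=\; \Omega\!\left(\varepsilon^{-\beta/(2(1-\beta))}\right),
\]
so taking $\alpha := \tfrac{\beta}{2(1-\beta)}-\tfrac12 = \tfrac{2\beta-1}{2(1-\beta)}$ yields $M_{F,H}(\varepsilon)\ge \Omega(\varepsilon^{-1/2-\alpha})$. In version (b), pseudorandomness forces part sizes $\gtrsim \lambda/p$, so $M\gtrsim np/\lambda$; with $n=\Theta(\varepsilon^{-\gamma})$ and $p=\Theta(\varepsilon)$ for a suitable $\gamma>1$ the same form of bound drops out.

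\textbf{The main obstacle} is Step 1(a)/(b): one has to extract a \emph{strict} improvement $\alpha>0$ beyond the trivial $\varepsilon^{-1/2}$ threshold. The straightforward Erd\H{o}s--Shearer construction only gives $\beta\le 1/2$ (matching the Kim bound for triangle-free pseudorandom graphs), so a naive implementation stalls exactly at $\varepsilon^{-1/2}$. I expect the actual proof either (i) uses an $H$-specific algebraic or Ramsey-type construction that yields $\beta>1/2$ in a range of parameters (e.g.\ via generalized Kneser/Schrijver or Mycielskian iteration on $r$-chromatic cores), or (ii) combines the chromatic bound with a sharper ``fractional-chromatic under edge deletion'' estimate so that the loss from deletion is $o(\sqrt{\varepsilon}\,n)$ instead of $O(\sqrt{\varepsilon}\,n)$. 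Either route should give a small but positive $\alpha=\alpha(F,H)$, which is all the statement requires.
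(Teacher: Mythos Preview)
Your route (a) rests on a false lemma, and you yourself already diagnose why the approach stalls.

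\textbf{The edge-deletion chromatic bound is wrong.} The ``classical fact'' in Step~2, that removing $m$ edges drops $\chi$ by at most $O(\sqrt m)$, fails already for $K_n$ minus a perfect matching: here $m=n/2$ and $\chi$ falls from $n$ to $n/2$, a loss of $m$, not $\sqrt m$. The correct general bound is multiplicative, $\chi(G)\le \chi(G')\cdot\chi(G'')\le \chi(G')\cdot O(\sqrt m)$, which gives only $\chi(G')\ge \chi(G)/O(\sqrt m)$ and is useless for your optimisation. Even granting a miraculous additive bound, you then need $\beta>\tfrac12$, and you correctly observe that high-girth constructions do not deliver this (indeed graphs of large odd girth have $\chi=o(\sqrt n)$). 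So route~(a) cannot produce any $\alpha>0$.

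\textbf{What the paper does.} The argument is closest to your sketch~(b), but the mechanism is a direct union bound rather than spectral pseudorandomness. Set $n=1/\varepsilon$ and $G\sim\mathcal G(n,p)$ with $p\asymp n^{-1+1/g}$, where $g$ is the odd girth of $H$; the expected number of cycles of each length $\le g$ is $O(n)$, so deleting one vertex per short cycle leaves an $H$-hom-free subgraph $G'$ on $\ge n/2$ vertices. Now fix \emph{any} $F$-hom-free $\Gamma$ on $k$ vertices and \emph{any} map $\phi:V(G')\to V(\Gamma)$. The graph of pairs mapped to edges of $\Gamma$ is $K_{|F|}$-free, so by Tur\'an at least $\Omega(n^2)$ pairs are mapped to non-edges; by Chernoff, with failure probability $e^{-\Omega(pn^2)}$ at least $\Omega(pn^2)=\Omega(n^{1+1/g})>\varepsilon n^2$ of these are edges of $G$. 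The union bound over all $2^n$ vertex subsets, all $2^{\binom k2}$ targets $\Gamma$, and all $k^n$ maps $\phi$ succeeds exactly when $k\le c\sqrt{p}\,n$, giving
\[
M_{F,H}(\varepsilon)\;\ge\;\Omega\!\left(\sqrt{p}\,n\right)\;=\;\Omega\!\left(n^{1/2+1/(2g)}\right)\;=\;\Omega\!\left((1/\varepsilon)^{1/2+1/(2g)}\right),
\]
so $\alpha=1/(2g)$. The extra $\alpha>0$ comes not from chromatic number but from the freedom to push $p$ polynomially above $1/n$ while keeping short odd cycles sparse, combined with beating the union bound over \emph{all} targets and maps with a single Chernoff estimate.
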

\begin{proof}
	Let $g$ be the shortest length of an odd cycle in $H$, let $n = \frac{1}{\varepsilon}$, and let $p = n^{-1+1/g}/(4g)$. Let
	$G$ be an $n$-vertex random graph with edge probability $p$. For every $3 \leq k \leq g$, the expected number of $k$-cycles in $G$ is at most $p^k n^k \leq n/(4g)$. Hence, the total number of cycles of length at most $g$ is, in expectation, at most $n/4$. By Markov's inequality, we conclude that $G$ has at most $n/2$ cycles of length at most $g$ with probability at least $1/2$. So by deleting one vertex from every cycle of length at most $g$, we obtain a subgraph $G'$ with at least $n/2$ vertices and with girth at least $g+1$. Then $G'$ is $H$-hom-free, because every homomorphic image of $H$ contains an odd cycle of length at most $g$.

	Hence, it suffices to show that with positive probability, no induced subgraph of $G$ on $n/2$ vertices has an $\varepsilon$-approximate homomorphism to an $F$-hom-free graph on $k$ vertices, where 
	$k \coloneqq c\sqrt{p}n = \Omega(n^{\frac{1}{2} + \frac{1}{2g}}) \geq \Omega((\frac{1}{\varepsilon})^{\frac{1}{2} + \frac{1}{2g}})$, where $c > 0$ is a small enough constant.
	So fix $U \subseteq V(G)$ of size $|U| = n/2$,
	fix an $F$-hom-free graph $\Gamma_0$ on $k$ vertices, and let $\phi : U \rightarrow V(\Gamma_0)$ be a map. 
	Let $G'$ be the graph on $U$ consisting of all pairs $uv \in \binom{U}{2}$ with $\phi(u)\phi(v) \in E(\Gamma_0)$. 
	Then $\phi$ is a homomorphism from $G'$ to $\Gamma_0$. Hence, $G'$ is $F$-hom-free because $\Gamma_0$ is. In particular, $G'$ is $K_f$-free, so by Tur\'an's theorem, 
	$e(G') \leq \frac{f-2}{f-1}\frac{|U|^2}{2}$. 
	It follows that there are at least 
	$$
	\binom{|U|}{2} - \frac{f-2}{f-1}\frac{|U|^2}{2} =
	\frac{|U|^2}{2(f-1)} - \frac{|U|}{2} \geq 
	\frac{|U|^2}{2f} = \frac{n^2}{8f}
	$$ pairs 
	$uv \in \binom{U}{2}$
	which are not mapped by $\phi$ to an edge of $\Gamma_0$. Let $Z$ be the number of these pairs which are edges in $G$, so that $Z$ stochastically dominates 
	$\text{Bin}(\frac{n^2}{8f},p)$. By the Chernoff bound,
	$$
	\mathbb{P}\left[ Z \leq \frac{1}{2}\mathbb{E}[Z] \right] \leq e^{-\frac{\mathbb{E}[Z]}{8}} \leq e^{-\frac{pn^2}{64f}} = 
	e^{-\Omega(n^{1+1/g})}. 
	$$
	There are at most $2^n$ choices for $U$, at most $2^{\binom{k}{2}} \leq 2^{c^2pn^2}$ choices for $\Gamma_0$, and at most $k^n \leq n^n = e^{n\log n}$ choices for $\phi$. So if $c$ is small enough, then by the union bound, with high probability we have $Z \geq \Omega(n^{1+1/g}) > \varepsilon n^2$ for every choice of $U,\Gamma_0,\phi$. If this happens, then no induced subgraph of $G$ on $n/2$ vertices has an $\varepsilon$-approximate homomorphism to an $F$-free graph on $k$ vertices, as required.  
\end{proof}
We remark that by taking blowups of $G'$ (constructed in the proof of the proposition), one can in fact find a graph witnessing this lower bound on $M_{F,H}(\varepsilon)$ with any number of vertices. Note that the upper and lower bounds on $M_{K_3,C_{\ell}}(\e)$ for odd $\ell \geq 9$ are off by a factor of roughly $\e^{1/2}$, and it would be interesting to close this gap.

\section{Concluding remarks}\label{sec:conclusion}
There remain a number of fascinating open problems related to asymmetric questions about graph homomorphisms. In particular, we again reiterate \cref{conj:no C7,conj:approx homo characterization}; the first claims that asymmetric homomorphism thresholds of odd cycles are already zero for $\delta_{\hom}(\C_{2t+3};\C_{2t+1})$, and the second would give a complete characterization of when one can get an exponential lower bound on $M_{F,H}(\varepsilon)$, showing that \cref{con:HG} is universal for such lower bounds.

%We believe that asymmetric homomorphism thresholds deserve more study. 
%Our results are highly specific to families of odd cycles. Because we use the specific structure of the generalised Mycielskian construction, it is not clear how to extend our results to more general settings. For example, it would be very interesting to determine $\delta_{\hom}(K_3;K_4)$, or more generally the asymmetric homomorphism threshold for any pair of cliques. We note that this question was resolved in \cite[Theorem 1.1]{2502.09576} under the added assumption that the host graph has bounded VC dimension, but it is not clear if removing this assumption would change the behaviour. 
While excluding \emph{all} odd cycles up to a certain length is very natural, the work of Sankar \cite{2206.07525} suggests that fascinating structure may arise if one studies, for example, $\delta_{\hom}(C_5;C_3)$ or $\delta_{\hom}(C_7;C_5)$. In particular, we would be very interested to learn whether Sankar's topological techniques can be used to give positive lower bounds on these quantities.

When it comes to asymmetric approximate homomorphisms, it would be interesting to tighten the lower and upper bounds appearing in \cref{thm:odd cycles}. While we obtain polynomial upper and lower bounds for $M_{K_3,C_\ell}(\varepsilon)$ for $\ell \geq 9$, and exponential upper and lower bounds for $M_{K_3,C_\ell}(\varepsilon)$ for $\ell \in \{5,7\}$, these bounds do not agree on the correct exponent for $\varepsilon$. 

\subsection*{Acknowledgments:} We are grateful to  Hong Liu and others for many helpful discussions on these topics. In particular, some of the ideas in the proof of \cref{thm:C3C5C7} were conceived in conjuction with Ant\'onio Gir\~ao, Freddie Illingworth, and Lukas Michel. Finally, we thank Sean English, Emily Heath, Andrew Simmons, and the anonymous referees for helpful comments on earlier drafts of this paper.
	\bibliographystyle{yuval}
	\bibliography{refs}

	\appendix

	\section{Lemmas from Section \ref{sec:approx hom hard}}\label{sec:F-tree}
	   
	\begin{proof}[Proof of \cref{lem:hyperforest}]
		The ``only if" direction is easy to prove by induction on the number of edges. We prove the ``if" direction. The proof is by induction on $e(\mathcal{G})$, and the base case $e(\mathcal{G}) = 0$ is trivial, so suppose that $e(\mathcal{G}) \geq 1$. Consider the bipartite graph with sides $A = V(\mathcal{G})$ and $B = E(\mathcal{G})$ where $v$ and $e$ are adjacent if $v \in e$. A cycle in this graph gives a Berge cycle in $\mathcal{G}$, so this graph is a forest. 
		Note that each $e \in B$ has degree $|e| \geq 2$ in the auxiliary graph, so a path ending in $e \in B$ can always be extended. 
		We claim that there is $e \in B$ which has at most one non-leaf neighbour. Indeed, take a longest path $P$ starting in $A$, let $v \in A$ be the first vertex of $P$, and let $e \in B$ be the neighbour of $v$ on $P$.  
		Then by the maximality of $P$, $e$ has at most one non-leaf neighbour. This means that in $\mathcal{G}$, $e$ has at most one vertex which is contained in another edge of $\mathcal{G}$. By induction, $\mathcal{G} - e$ is a hyperforest. Adding back $e$ preserves this property.
	\end{proof}

	\begin{proof}[Proof of \cref{lem:high-girth hypergraph}]
		Consider a random $f$-uniform, $f$-partite hypergraph with parts of size $n/f$ and edge probability $p = cn^{-f+1+1/(g-1)}$, where $c > 0$ is a small constant to be chosen later. The expected number of edges is {$p(n/f)^f = \Omega(n^{1+1/(g-1)})$.} For each $2 \leq k \leq g$, the expected number of Berge cycles of length $k$ is at most 
		$
		n^k n^{(f-2)k} p^k = n^{(f-1)k}p^k \leq \frac{1}{2g} \cdot p(n/f)^f,
		$ 
		using our choice of $p$ and provided that $c$ is a small enough constant. 
		So the expected number of Berge cycles of length at most $g$ is at most $\frac{1}{2}p(n/f)^f$.
		Deleting one edge from each Berge cycle of length at most $g$ gives the result. 
	\end{proof}

\end{document}